\documentclass[a4paper,11pt,leqno]{amsart}

\usepackage{amsthm}
\usepackage{amsmath}
\usepackage{amssymb}
\usepackage{mathrsfs}
\usepackage{latexsym}
\usepackage{exscale}
\usepackage{geometry}
\usepackage{graphicx}
\usepackage[utf8]{inputenc}
\usepackage{verbatim}
\usepackage{enumerate}
\usepackage{fancyhdr}

\usepackage{color}
\definecolor{red}{rgb}{1,0.1,0.1}
\definecolor{blue}{rgb}{0.1,0.1,1}
\definecolor{vb}{RGB}{160,32,240}



\theoremstyle{plain}

\newtheorem*{teo*}{Theorem}
\newtheorem*{prop*}{Proposition}
\newtheorem*{lema*}{Lemma}

\numberwithin{equation}{section}
\newtheorem{teo}{Theorem}[section]
\newtheorem{lema}[teo]{Lemma}

\newtheorem{prop}[teo]{Proposition}

\theoremstyle{remark}
\newtheorem{obs}[teo]{Remark}

\theoremstyle{definition}

\newtheorem*{mydef*}{Definition}

\newtheorem{ejem*}{Example}

\headheight=8pt
\topmargin=0pt
\textheight=659pt
\textwidth=436pt
\oddsidemargin=10pt
\evensidemargin=10pt

\calclayout

\allowdisplaybreaks

\newcommand{\R}{\mathbb{R}^n}
\newcommand{\C}{\mathbb{C}}

\newcommand{\N}{\mathbb{N}}

\newcommand{\A}{\mathcal{A}}

\newcommand{\D}{\mathcal{D}}
\newcommand{\E}{\mathcal{E}}
\newcommand{\F}{\mathcal{F}}

\begin{document}
\title[Commutator of certain fractional type operators]{Commutators of certain fractional type operators with H\"ormander conditions, one-weighted and two-weighted inequalities}

\author[G.~H.~Iba\~{n}ez~Firnkorn]{Gonzalo H. Iba\~{n}ez-Firnkorn}
\address{G.~H.~Iba\~{n}ez~Firnkorn\\ FaMAF \\ Universidad Nacional de C\'ordoba \\
CIEM (CONICET) \\ 5000 C\'ordoba, Argentina}
\email{gibanez@famaf.unc.edu.ar}

\author[M.~S.~Riveros]{Mar\'{\i}a Silvina Riveros}
\address{M.~S.~Riveros \\ FaMAF \\ Universidad Nacional de C\'ordoba \\
CIEM (CONICET) \\ 5000 C\'ordoba, Argentina}
\email{sriveros@famaf.unc.edu.ar}

\thanks{ The authors are  partially supported by
CONICET and SECYT-UNC}

\subjclass[2010]{42B20, 42B25}

\keywords{
Fractional operators, commutators, BMO, H\"ormander's condition of
Young type, one weighted inequalities, two weighted inequalities
}


\begin{abstract}
 In this paper we study the commutators of fractional type
integral  operators. This operators are   given by kernels of the
form
$$K(x,y)=k_1(x-A_1y)k_2(x-A_2y)\dots k_m(x-A_my),$$
where $A_i$ are
invertibles matrices and  each $k_i$ satisfies a fractional size condition and
ge\-ne\-ra\-li\-zed fractional H\"ormander condition. We obtain weighted Coifman estimates, weighted
$L^p(w^p)$ - $L^q(w^q)$ estimates and  weighted BMO estimates. We
also give a
 two weight strong estimate for pair of weights of the form $(u,Su)$ where $u$ is an arbitrary non-negative function and  $S$ is a maximal operator
 depending  on the smoothness of the kernel $K$. For the
singular case  we also give a two-weighted endpoint estimate.
\end{abstract}

\maketitle


\section{Introduction}

In \cite{RS88},  Ricci and Sj\"ogren obtained the
$L^p(\mathbb{R},dx)$ boundedness, $p>1$, for a family of maximal
operators on the three dimensional Heisenberg group. Some of these
operators arise in the study of the boundary behavior of Poisson
integrals on the symmetric space $SL{\mathbb{R}^3}/SO(3)$. To get
the principal result, they studied the boundedness on
$L^2(\mathbb{R})$ of the operator
\begin{equation}\label{eq: Tinicial}
T_{\alpha}f(x)=\int_{\mathbb{R}} |x-y|^{-\alpha}|x+y|^{\alpha
-1}f(y)dy,
\end{equation}
for $0<\alpha<1$. Later, in \cite{GU93}, Godoy and Urciuolo studied a generalization of  (\ref{eq: Tinicial}) for $\R$.\\

During the last years, several authors studied operators such that
are generalizations of  (\ref{eq: Tinicial}) of the following form,
 let $0\leq \alpha<n$ and  $m \in \N$. For $1\leq i \leq m$, let $A_i$ be
matrices such that satisfy
\begin{equation*}
(H)\quad \qquad A_i \text{ is invertible and } A_i- A_j \text{ is invertible for }
i\not = j,1\leq i,j\leq m.
\end{equation*}
 For any  locally integrable bounded function $f$, $f \in L_{\text{loc}}^{\infty}(\R)$,  we define
\begin{equation}
T_{\alpha,m}f(x)=\int_{\R}K(x,y)f(y)dy, \label{eq: defT}
\end{equation}
where
\begin{equation} \label{eq: defK}
K(x,y)=k_1(x-A_1y)k_2(x-A_2y)\dots k_m(x-A_my).
\end{equation}
They consider particular functions $k_i$ and studied the operator in
different context: weighted Lebesgue and Hardy spaces with constant
and variable exponent, also the endpoint estimates and boundedness
in $BMO$ and weighted  $BMO$.
 See for example  \cite{FF15,GSU94,GU96,GU99,RU05,RiU13,R17,RoU13,U06,V16}.\\

 These operators  generalized classical operators as
$I_{\alpha}$, the fractional integral operator, and the rough
fractional and singular  operators. In several cases these type of
operators are not bounded in $H^p$, but instead are bounded from
$H^p$ into $L^q$, $0<p<1$ and some $q$ (see \cite{RU11,RU12}).
    In the case of $\alpha=0$, $T_{0,m}$ behaves   like a  singular integral operator.  If
$0<\alpha<n$,  $m=1$, $A_1= I$ and
$k_1(x-A_1y)=\frac1{|x-y|^{n-\alpha}}$ then
$T_{\alpha,1}=I_{\alpha}$.
\\

In \cite{RU05}, \cite{RiU13} and \cite{RU14}, Urciuolo and the second
author consider each $k_i$ as a rough fractional kernel, then each
$k_i$ satisfies a $L^{\alpha_i,r_i}$-H\"ormander regular condition, $k_i\in
H_{\alpha,r_i}$, that is, for all  $x\in \R$ and $R>|x|$
 \begin{align*} \sum_{m=1}^{\infty} (2^mR)^{n- \alpha} \| (K_{\alpha}(\cdot - x) - K_{\alpha}(\cdot))\chi_{B(x,2^{m+1}R)\setminus B(x,2^mR)}\|_{r_i,B(x,2^mR)} <\infty.\end{align*}

More recently, in \cite{IFR17}, we analized operators of the form
(\ref{eq: defT}) with
 conditions of regularity more generals that the
$L^{\alpha,r}$-H\"ormander condition and a fractional size condition . For the definitions of this conditions recall  that a function $\Psi :
[0,\infty) \rightarrow [0,\infty)$ is said to be a Young function if
$\Psi$ is continuous, convex, no decreasing and satisfies
$\Psi(0)=0$ and $\displaystyle \lim_{t \rightarrow \infty} \Psi(t)=
\infty$.

For each Young function $\Psi$ we can induce an average of the
Luxemburg norm of a function $f$ in the ball $B$ defined by
\begin{align*} \|f\|_{\Psi,B}:= \inf \left\{\lambda >0:\, \frac1{|B|} \int_{B} \Psi\left(\frac{|f|}{\lambda}\right) \leq 1  \right\},\end{align*}
and a fractional maximal operator $M_{\alpha,\Psi}$ defined by,
given $f \in L^1_{\text{loc}}(\R)$ and $0\leq \alpha<n$,
\begin{align*} M_{\alpha,\Psi}f(x) := {\underset{B \ni x}{\sup}} |B|^{\alpha/n}\|f\|_{\Psi,B}. \end{align*}

Now, we present the fractional size condition and a generalized
fractional H\"ormander condition. For more details  see \cite{BLR11}
or \cite{GIFR17}.

Let $\Psi$ be a Young function and let $0\leq \alpha < n$.
Let us introduce some notation: $|x|\sim s$ means $s < |x|\leq 2s$
and we write $$\|f\|_{ \Psi,|x|\sim s}=\|f \chi_{|x|\sim
s}\|_{\Psi,B(0,2s)}.$$

The function $K_{\alpha}$ is said to satisfies the fractional size condition, 
 if there exists a constant $C>0$ such that
$$\|K_{\alpha}\|_{\Psi,|x| \sim s} \leq C s^{\alpha - n}.$$
In this case we denote  $K_{\alpha}\in S_{\alpha,\Psi}$. When
$\Psi(t)=t$ we write $S_{\alpha,\Psi}=S_{\alpha}$. Observe that if
$K_{\alpha}\in S_{\alpha}$, then there exists a constant $c>0$ such
that
$$\int_{|x|\sim s} |K_{\alpha}(x)|dx\leq c s^{\alpha}.$$

The function  $K_{\alpha}$ satisfies the
$L^{\alpha,\Psi,k}$-H\"ormander condition ($K \in
H_{\alpha,\Psi,k}$), if there exist constants $c_{\Psi}>1$ and
$C_{\Psi}>0$ such that for all $x$ and $R>c_{\A}|x|$,
 \begin{align*} \sum_{m=1}^{\infty} (2^mR)^{n- \alpha} m^k \| K_{\alpha}(\cdot - x) - K_{\alpha}(\cdot)\|_{\Psi,|y|\sim2^mR} \leq C_{\Psi}.\end{align*}
We say that $K_{\alpha} \in H_{\alpha,\infty,k}$ if $K_{\alpha}$
satisfies the previous condition with $\|\cdot\|_{L^{\infty},|x|\sim
2^mR}$ in place of $\|\cdot\|_{\Psi,|x|\sim 2^mR}$. When $l=0$, we
write $H_{\alpha,\Psi}=H_{\alpha,\Psi,0}$.

When $\Psi(t)=t^r$, $1\leq r < \infty$, we simply write
$H_{\alpha,r,k}$ instead of $H_{\alpha,\Psi,k}$.
\\

In this paper, we study the $k$-order commutators of operator of the form (\ref{eq: defT})
where $k_i \in S_{n-\alpha_i, \Psi_i}\cap H_{n-\alpha_i,\Psi_i,k}$.\\
Recall that given  a locally integrable function $b$ and an operator
$T_\alpha$ defined as (\ref{eq: defT}), we define 
 the $k$-order commutator, $k\in \N\cup \{0\}$, by
 $$T_{\alpha, b}^k(f)=[b,T_{\alpha,b}^{k-1}]f=\int (b(x)-b(y))^kK(x,y)f(y)dy$$
  where we assume that $T_{\alpha,b}^0=T_\alpha$.

We also consider the following condition for the weights, there
exists $c>0$ such that
\begin{equation}
w(A_i x)\leq c w(x),\label{eq: condW}
\end{equation}
$a.e. x\in \R$ and for all $1\leq i \leq m$.

The following is an example of a weight $w$ that satisfies condition
(\ref{eq: condW}). Observe that also power weights satisfy  this
condition.
\begin{ejem*}
Let $ w(x)=
\begin{cases}
        \log\left(\frac1{|x|}\right)  & \text{if } |x| \leq \frac1{e} \\
        1 & \text{if } |x| > \frac1{e}
\end{cases}
$. Then  $w\in A_1$ and  satisfies $(\ref{eq: condW})$.
\end{ejem*}

The main results in this paper is the following Coifman type
estimate:
\begin{teo}\label{Coifman}
Let $b \in BMO$, $0\leq \alpha < n$,  $k\in \N\cup \{0\}$, $m \in
\N$ and $1\leq i \leq m$. Let $\Psi_i$ be Young functions and
$0\leq\alpha_i<n$ such that $\alpha_1+\cdots + \alpha_n=n-\alpha$.
Let $T_{\alpha,m}$ be the integral operator defined by (\ref{eq:
defT}) and $T_{\alpha,m,b}^k$ be the $k$-order commutator of
$T_{\alpha,m}$.
Suppose that the matrices $A_i$ satisfy the hypothesis $(H)$ and $k_i \in S_{n-\alpha_i, \Psi_i}\cap H_{n-\alpha_i,\Psi_i,k}$.\\
\noindent If $\alpha=0$,  let  $T_{0,m}$ be of strong type $(p_0,p_0)$ for some $1<p_0<\infty$.\\
Let $\varphi_k(t)=t\log(e+t)^k$ and let $\phi$ be  a Young function
such that
$\Psi_1^{-1}(t)\cdots\Psi_m^{-1}(t)\overline{\varphi_k}^{\,-1}(t)\phi^{-1}(t)\lesssim
t$ for $t\geq t_0$, some $t_0>0$.

 Let $0<p<\infty$. Then there exists $C>0$ such that, for $f\in
L_c^{\infty}(\R)$ and $w\in A_{\infty}$,
\begin{align}\label{Coifest}
\int_{\R} |T_{\alpha,m,b}^kf(x)|^pw(x)dx\leq C\|b\|_{BMO}^k \sum_{i=1}^m\int_{\R} |M_{\alpha,\phi}f(x)|^pw(A_ix)dx.
\end{align}
whenever the left-hand side is finite.\\
Furthermore, if $w\in A_{\infty}$ satisfying (\ref{eq: condW}), then
$$\int_{\R} |T_{\alpha,m,b}^kf(x)|^pw(x)dx\leq C\|b\|_{BMO}^{kp} \int_{\R} |M_{\alpha,\phi}f(x)|^pw(x)dx.$$
\end{teo}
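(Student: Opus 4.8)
The plan is to prove the Coifman estimate \eqref{Coifest} by a local mean oscillation / sparse domination argument, reducing pointwise control of $T_{\alpha,m,b}^k$ to an appropriate sum of fractional Orlicz averages. First I would recall the standard decomposition of the $k$-order commutator: writing $b_Q = \frac1{|Q|}\int_Q b$, for a fixed cube $Q$ we have
\begin{align*}
T_{\alpha,m,b}^k f(x) = \sum_{j=0}^{k}\binom{k}{j}(b(x)-b_Q)^{k-j}\,T_{\alpha,m}\big((b_Q-b)^j f\big)(x),
\end{align*}
so that the oscillation of $T_{\alpha,m,b}^k f$ on $Q$ is controlled, via a John–Nirenberg type estimate for $b\in BMO$, by terms of the form $\|b\|_{BMO}^{k-j}$ times the oscillation of $T_{\alpha,m}$ applied to $(b-b_Q)^j f$. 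The key geometric input is that, because $K(x,y)=k_1(x-A_1y)\cdots k_m(x-A_my)$ with each $k_i\in S_{n-\alpha_i,\Psi_i}\cap H_{n-\alpha_i,\Psi_i,k}$, the standard Hörmander-type estimates (established earlier, e.g. in \cite{IFR17, GIFR17}) give, for $x,x'\in Q$ and $y$ in the $2^{l+1}Q\setminus 2^lQ$ annuli,
\begin{align*}
\int_{\R\setminus 2Q}|K(x,y)-K(x',y)|\,|b(y)-b_Q|^j\,|f(y)|\,dy \lesssim \|b\|_{BMO}^{j}\sum_{i=1}^m M_{\alpha,\phi}f(A_i^{-1}\xi_Q),
\end{align*}
where the extra logarithmic factors $m^k$ in the Hörmander condition absorb exactly the $\log(e+t)^j$ losses coming from the John–Nirenberg inequality, and the product structure of $\phi$ (the hypothesis $\Psi_1^{-1}\cdots\Psi_m^{-1}\overline{\varphi_k}^{-1}\phi^{-1}\lesssim t$) is what lets a generalized Hölder inequality in Orlicz spaces collapse the $m$ size conditions plus the commutator factor into a single average $\|f\|_{\phi,B}$.

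Next, for the local part I would handle $T_{\alpha,m}((b-b_Q)^j f\chi_{2Q})$ using the size conditions $k_i\in S_{n-\alpha_i,\Psi_i}$ together with the fractional analogue of the Coifman–Rochberg–Weiss argument: on a ball $B\supset Q$, generalized Hölder gives $\frac1{|B|}\int_B |T_{\alpha,m}g|\lesssim |B|^{\alpha/n}\|g\|_{\phi,B}$-type bounds, and then the weight translation $w\mapsto w(A_i\cdot)$ enters precisely because the natural center for the $i$-th factor's annuli is $A_i^{-1}$ times the center of $Q$. Assembling these two pieces via the local mean oscillation formula of Lerner (sparse form), one obtains the pointwise sparse bound
\begin{align*}
|T_{\alpha,m,b}^k f(x)| \lesssim \|b\|_{BMO}^k \sum_{i=1}^m \sum_{Q\in\mathcal S} |Q|^{\alpha/n}\|f\|_{\phi,A_i^{-1}Q}\,\chi_Q(x),
\end{align*}
for a sparse family $\mathcal S$; testing against $w\in A_\infty$ and using the standard fact that sparse fractional operators are bounded on $L^p(w)$ for all $0<p<\infty$ and all $w\in A_\infty$ (this is where the restriction $0<p<\infty$, $w\in A_\infty$, and the hypothesis "left side finite" come in) yields \eqref{Coifest} after a change of variables $x\mapsto A_i^{-1}x$ turning $\|f\|_{\phi,A_i^{-1}Q}$-averages controlled by $M_{\alpha,\phi}f$ and $w(Q)$ into $w(A_i x)$. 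In the singular case $\alpha=0$ one replaces the fractional size/Hörmander estimates with their $\alpha=0$ counterparts and uses the assumed $(p_0,p_0)$ boundedness of $T_{0,m}$ to control the local term, exactly as in the non-commutator case.

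For the final displayed inequality, once \eqref{Coifest} is in hand the argument is short: if in addition $w$ satisfies \eqref{eq: condW}, then $w(A_i x)\le c\,w(x)$ a.e.\ for each $i$, so each of the $m$ summands on the right of \eqref{Coifest} is bounded by $C\int_{\R}|M_{\alpha,\phi}f|^p w\,dx$; summing over $i=1,\dots,m$ just changes the constant. The only subtlety is the power of $\|b\|_{BMO}$: the statement asks for $\|b\|_{BMO}^{kp}$ rather than $\|b\|_{BMO}^{k}$, which I would obtain by the standard homogeneity/normalization trick — apply \eqref{Coifest} to $b/\|b\|_{BMO}$ (for which the BMO norm is $1$) and to $f$, noting $T_{\alpha,m,b}^k f = \|b\|_{BMO}^k\,T_{\alpha,m,\,b/\|b\|_{BMO}}^k f$, so that raising both sides to the $p$ brings out $\|b\|_{BMO}^{kp}$ cleanly. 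I expect the main obstacle to be the careful bookkeeping in the Hörmander/Orlicz-Hölder step: one must verify that the $m^k$ weights in the $L^{n-\alpha_i,\Psi_i,k}$-Hörmander condition exactly compensate the iterated John–Nirenberg losses $\overline{\varphi_k}$, and that the product condition $\Psi_1^{-1}\cdots\Psi_m^{-1}\overline{\varphi_k}^{-1}\phi^{-1}\lesssim t$ is strong enough to run the generalized Hölder inequality uniformly over all annuli and all the translated centers $A_i^{-1}\xi_Q$ simultaneously.
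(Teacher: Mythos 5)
Your overall architecture has a fatal step: the pointwise sparse bound
\begin{equation*}
|T_{\alpha,m,b}^k f(x)| \lesssim \|b\|_{BMO}^k \sum_{i=1}^m \sum_{Q\in\mathcal S} |Q|^{\alpha/n}\|f\|_{\phi,A_i^{-1}Q}\,\chi_Q(x)
\end{equation*}
is false, already in the simplest case $m=1$, $A_1=I$, $\alpha=0$, $k=1$, $T$ the Hilbert transform. Take $b(x)=\log|x|$ and $f=\chi_{[1,2]}$: for $x\to 0$ one has $|[b,T]f(x)|\sim \log(1/|x|)\to\infty$, while any cube $Q\ni x$ giving a nonzero average $\|f\|_{L\log L,Q}$ must meet $[1,2]$, hence has sidelength $\gtrsim 1$, and sparseness bounds the number of such cubes per scale, so the right-hand side stays bounded near $0$ no matter how $\mathcal S$ is chosen. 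The error enters exactly where you claim that the local oscillation of $(b(x)-b_Q)^{k-j}T_{\alpha,m}\bigl((b_Q-b)^j f\bigr)(x)$ can be dispatched ``via John--Nirenberg'' with the factor $\|b\|_{BMO}^{k-j}$ pulled out pointwise: $|b(x)-b_Q|$ is only exponentially integrable, not bounded, so it cannot be replaced by $\|b\|_{BMO}$ at the pointwise level. The known sparse dominations for commutators necessarily retain the $b$-oscillation factors inside the sparse sums (or, equivalently, the oscillation estimate produces extra terms $M_{\epsilon}(T_{\alpha,m,b}^{l}f)$, $0\le l\le k-1$), and these terms are precisely what your sparse bound silently discards.

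The Coifman inequality itself is of course true, but it must be proved at the level of norms, not pointwise. This is what the paper does: it first proves the sharp maximal estimate \eqref{eq: Sharp2} of Theorem \ref{SharpHorm}, which keeps the terms $\sum_{l=0}^{k-1}\|b\|_{BMO}^{k-l}M_{\epsilon}(T_{\alpha,m,b}^{l}f)$, and then runs an induction on $k$: for a fixed exponent $p_0$ one applies the Fefferman--Stein inequality $\|g\|_{L^{p_0}(w)}\lesssim\|M^{\sharp}_{\delta}g\|_{L^{p_0}(w)}$ (which requires the a priori finiteness of the left-hand side, handled by first truncating $b$ and $w$, i.e. $b_N$, $w_N$, and passing to the limit), absorbs the intermediate commutators via the induction hypothesis, and finally uses the $A_\infty$ extrapolation theorem of Cruz-Uribe--Martell--P\'erez to pass from one $p_0$ to all $0<p<\infty$. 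If you want to keep a sparse-style argument, you would have to dominate $T_{\alpha,m,b}^k$ by sparse objects of the form $\sum_Q |b(x)-b_Q|^{k-j}\,|Q|^{\alpha/n}\|\,|b-b_Q|^j f\|_{\phi_j,A_i^{-1}Q}\chi_Q(x)$ and only then use John--Nirenberg together with an $A_\infty$ Carleson-type argument to extract $\|b\|_{BMO}^k$ at the norm level; as written, your proof has a genuine gap. (Your final remark on the powers $\|b\|_{BMO}^{k}$ versus $\|b\|_{BMO}^{kp}$ via normalizing $\|b\|_{BMO}=1$ is fine and is indeed how the paper handles homogeneity.)
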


To prove this estimate, we need a pointwise estimate that relates de
sharp delta maximal of the commutator with a sum of generalized
fractional maximal function of $f$.  As a consequence of the Coifman
estimate we get strong weighted estimates for the operator
$T_{\alpha,m,b}^k$ and  weighted BMO estimates. We also obtain
strong weighted estimates of the form
$$\|T_{\alpha,m,b}^kf\|_{L^p(u)}\leq c \|f\|_{L^p(Su)},$$
where $1<p<n/\alpha$, $u$ is any weight and where $S$ is appropriate
maximal operator. For $T_{0,m,b}^k$ we also give a two pair $(u,
Su)$ endpoint estimate. that is,
\begin{align} \label{debil2pesos}
u\{x\in \R : |T_{0,m,b}^k(x)|>\lambda\}\leq c\int_{\R}
\varphi_k\left(\frac{|f(x)|}{\lambda}\right)Su(x)dx.
\end{align}

The plan of the paper is the following, the next section contains
some preliminaries, definitions and previous results that are needed
to state the others results which appear in section 3. The proof of
the Coifman Theorem \ref{Coifman} is in the section 4.
 In section 5 we prove  strong one weighted inequalities and in section 6
 the two-weighted
inequalities.

\section{Preliminaries and previous results}
In this section we present some notions about Young function, Luxemburg norm and weights that will be fundamental throughout all this paper.  Also we present some previous results.\\

\subsection { Young Function and Luxemburg norm.}
Now, we present some extra definitions and properties for Young
functions. Also we given examples. For more details of these topics
see \cite{O65} or \cite{RaoRen91}.

Each Young function $\Psi$ has an associated complementary Young function $\overline{\Psi}$ satisfying
the generalized H\"older inequality
 \begin{align*} \frac1{|B|} \int_{B} |fg| \leq 2\|f\|_{\Psi,B}\|g\|_{\overline{\Psi},B}. \end{align*}

If $\Psi_1,\dots,\Psi_m,\phi$ are Young functions satisfying $\Psi_1^{-1}(t)\cdots\Psi_m^{-1}(t)\phi^{-1}(t)\leq c t$, for all $t \geq t_0$, some $t_0>0$ then
\begin{align}\label{Holder}
 \|f_1\cdots f_m g\|_{L^1,B} \leq c \|f_1\|_{\Psi_1,B}\cdots\|f_m\|_{\Psi_m,B}\|g\|_{\phi,B}, \end{align}
the function $\phi$ is called the complementary of the functions $\Psi_1,\dots,\Psi_m$.


Here are some examples of maximal operators related to certain Young
functions.
\begin{itemize}
\item $\Psi(t)=t$, then $\|f\|_{\Psi,Q}=f_Q:=\frac1{|Q|}\int_Q |f|$ and $M_{\alpha,\Psi}=M_{\alpha}$, the fractional maximal operator.

\item $\Psi(t)=t^{r}$ with $1< r<\infty$. In that case $\|f\|_{\Psi,Q}=\|f\|_{r,Q}:=\left(\frac1{|Q|}\int_Q |f|^r\right)^{1/r}$ and  
$M_{\alpha,\Psi}=M_{\alpha,r}$,  where $M_{0,r} f = M_rf  := M(f^r)^{1/r}$.

\item $\Psi(t)=\exp(t)-1$. Then, $M_{\alpha,\Psi}=M_{\alpha,\exp(L)}$.

\item If $\beta > 0$ and $1\leq r<\infty$, $\Psi(t)= t^r\log(e+t)^{\beta}$ is a Young function then $M_{\alpha,\Psi}=M_{\alpha,L^r(\log L)^{\beta}}$.
\item If $\alpha=0$ and $k\in\mathbb{N}$, $\Psi(t)= t\log(e+t)^{k}$ it can be proved
that $M_{\Psi}\approx M^{k+1}$, where $M^{k+1}$ is $M$ iterated $k+1$ times.
\end{itemize}

\begin{obs}\label{Malphar}
Observe that if $\Psi(t)=t^r$ then a simple computation shows that
$$M_{\alpha,r}f=\left(M_{\alpha r}|f|^r\right)^{1/r}.$$
\end{obs}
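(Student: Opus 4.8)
The plan is to unwind both sides of the claimed identity directly from the definitions, with no auxiliary machinery. Fix $f\in L^1_{\mathrm{loc}}(\R)$ and a point $x\in\R$. Since $\Psi(t)=t^r$ with $1\le r<\infty$, the Luxemburg average over a ball $B$ is, as recorded in the second bulleted example, $\|f\|_{\Psi,B}=\|f\|_{r,B}=\bigl(\frac1{|B|}\int_B|f|^r\bigr)^{1/r}$; this is the only nontrivial computation and it follows because $\lambda\mapsto\frac1{|B|}\int_B(|f|/\lambda)^r$ is strictly decreasing in $\lambda$ and equals $1$ exactly at $\lambda=\bigl(\frac1{|B|}\int_B|f|^r\bigr)^{1/r}$. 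Substituting this into the definition of $M_{\alpha,\Psi}$ gives
\begin{align*}
M_{\alpha,r}f(x)=\sup_{B\ni x}|B|^{\alpha/n}\Bigl(\tfrac1{|B|}\int_B|f|^r\Bigr)^{1/r}
=\sup_{B\ni x}\Bigl(|B|^{r\alpha/n}\tfrac1{|B|}\int_B|f|^r\Bigr)^{1/r}
=\sup_{B\ni x}\Bigl(|B|^{(\alpha r)/n}\tfrac1{|B|}\int_B|f|^r\Bigr)^{1/r},
\end{align*}
where in the second equality the exponent $1/r$ is pulled outside the power of $|B|$.

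Next I would observe that, by definition of $M_{\alpha r}$ applied to the function $|f|^r$ (which is again locally integrable) with fractional parameter $\alpha r$,
\begin{align*}
M_{\alpha r}\bigl(|f|^r\bigr)(x)=\sup_{B\ni x}|B|^{(\alpha r)/n}\,\tfrac1{|B|}\int_B|f|^r .
\end{align*}
Since $t\mapsto t^{1/r}$ is continuous, strictly increasing and nonnegative on $[0,\infty)$, it commutes with the supremum: $\sup_B\bigl(g(B)\bigr)^{1/r}=\bigl(\sup_B g(B)\bigr)^{1/r}$ for any nonnegative family $\{g(B)\}$. Applying this with $g(B)=|B|^{(\alpha r)/n}\frac1{|B|}\int_B|f|^r$ identifies the last display for $M_{\alpha,r}f(x)$ with $\bigl(M_{\alpha r}|f|^r(x)\bigr)^{1/r}$, which is the asserted equality. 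One should note the admissibility constraint: $M_{\alpha r}$ requires $0\le \alpha r<n$, so the identity is to be read under that hypothesis (e.g. $0\le\alpha<n$ and $r$ close to $1$, or more generally whenever $\alpha r<n$); in the classical case $\alpha=0$ it reduces to $M_rf=(M|f|^r)^{1/r}$, exactly as stated in the text.

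There is essentially no obstacle here — the only point requiring a word of care is the interchange of the power $1/r$ with the supremum, which is justified because $t\mapsto t^{1/r}$ is a homeomorphism of $[0,\infty)$; and the bookkeeping of the exponent of $|B|$, namely $r\cdot(\alpha/n)-? $, where one must check that $|B|^{r\alpha/n}\cdot|B|^{-1}$ combined under the $r$-th root reproduces $|B|^{\alpha/n}\cdot|B|^{-1/r}$, i.e. that $(\alpha r/n-1)/r=\alpha/n-1/r$, which is immediate. If one prefers, the same computation can be phrased entirely in terms of cubes $Q$ instead of balls $B$, as in Remark~\ref{Malphar}; the argument is verbatim the same.
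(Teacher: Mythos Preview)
Your proof is correct and is exactly the ``simple computation'' the paper alludes to without writing out: unwind the Luxemburg norm for $\Psi(t)=t^r$, pull the power $|B|^{\alpha/n}$ inside the $r$-th root as $|B|^{(\alpha r)/n}$, and use monotonicity of $t\mapsto t^{1/r}$ to commute with the supremum. There is no alternative argument to compare against, since the paper offers none.
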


\begin{prop}\label{obsMA} Let $\D$ be a Young function and $A$ be a invertible matrix. Let $w_A(x)=w(Ax)$,  then
$$M_{\alpha,\D}(w_{A})(A^{-1}x) \leq c_{A,n} M_{\alpha,\D}(w)(x)$$
for almost every $x \in \R$.
\end{prop}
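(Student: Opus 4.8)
The plan is to unwind both maximal operators directly from the definition and track how the change of variables $x \mapsto A^{-1}x$ interacts with the supremum over balls. Fix an invertible matrix $A$ and write $w_A(x)=w(Ax)$. By definition,
\[
M_{\alpha,\D}(w_A)(A^{-1}x) = \sup_{B \ni A^{-1}x} |B|^{\alpha/n}\|w_A\|_{\D,B},
\]
so the first step is to understand $\|w_A\|_{\D,B}$ for a ball $B$. Using the Luxemburg norm, $\|w_A\|_{\D,B}$ is the infimum of those $\lambda>0$ with $\frac1{|B|}\int_B \D(w(Ay)/\lambda)\,dy\le 1$; performing the substitution $z=Ay$ turns this integral into $\frac1{|B|}\int_{A(B)} \D(w(z)/\lambda)\,|\det A|^{-1}\,dz$. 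Since $|A(B)| = |\det A|\,|B|$, this equals $\frac1{|A(B)|}\int_{A(B)}\D(w(z)/\lambda)\,dz$, and hence $\|w_A\|_{\D,B} = \|w\|_{\D,A(B)}$ exactly.

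The second step is geometric: the image $A(B)$ of a Euclidean ball under $A$ is an ellipsoid, not a ball, so I cannot use it verbatim in the definition of $M_{\alpha,\D}(w)$. However, $A(B)$ is comparable to a ball: if $B=B(c,r)$ then $A(B)\subseteq B(Ac, \|A\|\,r)=:\tilde B$, and $|\tilde B| \le c_{A,n}|A(B)|$ with a constant depending only on $A$ and $n$ (the ratio $\|A\|^n/|\det A|$, up to the dimensional constant from the volume of the unit ball). Since $\D$ is nondecreasing, monotonicity of the Luxemburg norm in the underlying set (together with the comparability of the averaging factors $1/|A(B)|$ and $1/|\tilde B|$) gives $\|w\|_{\D,A(B)} \le c_{A,n}'\|w\|_{\D,\tilde B}$; more carefully, one checks from the definition that enlarging the domain from $A(B)$ to $\tilde B$ while replacing $\lambda$ by $c_{A,n}\lambda$ keeps the defining inequality valid. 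Also $|B|^{\alpha/n}$ is comparable to $|\tilde B|^{\alpha/n}$ with a constant depending on $A$, $n$, and $\alpha$ (absorbed into $c_{A,n}$ since $0\le\alpha<n$ is bounded).

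Finally, assembling the pieces: as $B$ ranges over all balls containing $A^{-1}x$, the ball $\tilde B$ contains $A(A^{-1}x)=x$, so each term $|B|^{\alpha/n}\|w_A\|_{\D,B} = |B|^{\alpha/n}\|w\|_{\D,A(B)}$ is bounded by $c_{A,n}\,|\tilde B|^{\alpha/n}\|w\|_{\D,\tilde B} \le c_{A,n}\,M_{\alpha,\D}(w)(x)$. Taking the supremum over $B$ yields the claim. The only mildly delicate point — and the step I would be most careful about — is the monotonicity argument for the Luxemburg average when passing from the ellipsoid $A(B)$ to the enclosing ball $\tilde B$, since the normalizing factor $1/|\cdot|$ changes; but this is handled cleanly by observing $\frac1{|\tilde B|}\int_{\tilde B}\D(w/\mu) \le \frac{|A(B)|}{|\tilde B|}\cdot\frac1{|A(B)|}\int_{A(B)}\D(w/\mu) + 0$ is false in general, so instead one argues $\frac1{|\tilde B|}\int_{\tilde B}\D\big(w/(c_{A,n}\lambda)\big)$ directly, using that $\D(t/c)\le \D(t)/c$ for $c\ge 1$ by convexity and $\D(0)=0$, together with $|A(B)|/|\tilde B|\ge c_{A,n}^{-1}$, to reduce to the defining inequality for $\|w\|_{\D,A(B)}$. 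This is routine and everything is uniform in $x$, giving the stated almost-everywhere bound.
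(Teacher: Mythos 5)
Your argument is correct and is essentially the paper's own proof: the change of variables giving $\|w_A\|_{\D,B}=\|w\|_{\D,A(B)}$, followed by enclosing the ellipsoid $A(B)$ in a ball $\tilde B\ni x$ of comparable measure and absorbing the volume ratio into the constant, is exactly what the paper does (the paper restricts to balls centered at $A^{-1}x$ and states the conclusion for the centered maximal operator, while you handle the uncentered one directly and, unlike the paper, spell out the convexity step $\D(t/c)\le\D(t)/c$). One small caution: your closing parenthetical momentarily swaps the roles of $A(B)$ and $\tilde B$ --- the correct reduction is to verify the defining inequality for $\|w\|_{\D,A(B)}$ at level $c_{A,n}\lambda$ from the defining inequality for $\|w\|_{\D,\tilde B}$ at level $\lambda$ --- but the inequality you actually state and use, $\|w\|_{\D,A(B)}\le c_{A,n}\|w\|_{\D,\tilde B}$, is the right one, so this is only a slip of wording.
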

\begin{proof}
Fix $x\in \R$ and let $B=B(A^{-1}x,r)$ be a ball 
\begin{align*}
\frac1{|B|}\int_B
\D\left(\frac{w(Ay)}{\lambda}\right)dy=\frac1{|AB|}\int_{AB}
\D\left(\frac{w(z)}{\lambda}\right)dz.
\end{align*}
Then,  $x\in AB$ and
$$\|w_A\|_{\D,B}=\|w\|_{\D,AB}$$

Let $\|A\|_{\infty}=\sup_{x : |x|=1}|Ax|$.
There exist balls $B_1=B(x,\frac{r}{\|A^{-1}\|_{\infty}})$ and $B_2=B(x,\|A\|_{\infty}r)$ such that $B_1\subset AB \subset B_2$, then
$$\|w\|_{\D,AB}\leq \|A^{-1}\|_{\infty}^n\|A\|_{\infty}^n\|w\|_{\D,B_2}$$
Hence,
$$M^c_{\alpha,\D}(w_{A})(A^{-1}x)\leq \|A^{-1}\|_{\infty}^n\|A\|_{\infty}^n M^c_{\alpha,\D}w(x)$$
\end{proof}



\subsection {Weights.}  A weight
is a non negative locally integrable function in $\mathbb{R}^n$ that
takes values in $(0,\infty)$ almost every where.
 Let $0\leq \alpha<n$, $1\leq p,q \leq \infty$, we say that a weight
$w \in A_{p,q}$ if
$$[w]_{A_{p,q}}=\underset{B}{\sup}\|w\|_{q,B}\|w^{-1}\|_{p',B}<\infty,$$
 where the supremum is taken  over all balls $B\subset \R$.

If $1\leq p<\infty$, $A_p$ denotes the classical Muckenhoupt classes
of weights and $A_{\infty}=\cup_{p\geq 1} A_p$. Observe that $w\in
A_{p,p}$  if and only if $w^p\in A_p$ and $w\in A_{\infty,\infty}$
if,  and only if  $w^{-1}\in A_1$.

The fractional $B_p$ condition, $B_p^{\alpha}$, was introduced by
Cruz-Uribe and Moen in \cite{CUM13}:
 Let $1<p<n/\alpha$ and  $\frac1{q}=\frac1{p}-\frac{\alpha}{n}$.  A Young function $\phi \in B_p^{\alpha}$ if
$$\int_1^{\infty}\frac{\phi(t)^{q/p}}{t^q}\frac{dt}{t}<\infty.$$

They  proved that if $\phi \in B_p^{\alpha}$ then
$M_{\alpha,\phi}:L^p(dx)\rightarrow L^q(dx)$ and
$$\|M_{\alpha,\phi}\|_{L^p\rightarrow L^q}\leq c \left(\int_1^{\infty}\frac{\phi(t)^{q/p}}{t^q}\frac{dt}{t}\right)^{1/q}.$$

We will consider the following bump conditions: let  $1<q<\infty$ and
$\Psi$ be a Young function, then a weight $w\in A_{q,\Psi}$ if
$$[w]_{A_{q,\Psi}}=\underset{Q}{\sup}\|w\|_{q,Q}\|w^{-1}\|_{\Psi,Q} <\infty$$
 where the supremum is over all balls $B\subset \R$.

Let  $f$  be locally integrable function in $\mathbb{R}^n$. The
sharp maximal function is defined by
$$ M^{\#}f(x)=  {\underset{B\ni x}{\sup}}\frac1{|B|}\int_{B}\left|f(y)-\frac1{|B|}\int_{B}f(z)dz\right|dy.$$

A locally integrable function $f$ has bounded mean oscillation ($f
\in BMO$) if $ M^{\#}f \in L^{\infty}$ and the norm
$\|f\|_{BMO}=\|M^{\#}f\|_{\infty}$.

Observe that the $BMO$ norm is equivalent to
$$\|f\|_{BMO}=\|M^{\#}f\|_{\infty} \sim {\underset{B}{\sup}} {\underset{a \in \C}{\inf}} \frac1{|B|} \int_{B} |f(x)-a| dx. $$

There is also a weighted version of $BMO$, this is denoted by
$BMO(w)$, and it is described by the seminorm
$$\| |f| \|_w=\underset{B}{\sup}\|w\chi_B\|_{\infty}\left(\int_{B}\left|f(y)-\frac1{|B|}\int_{B}f(z)dz\right|dy\right).$$

It is easy to check that
$$\| |f| \|_w \simeq \|wM^{\#}f\|_{\infty}.$$

\subsection{ Previous results.}  Here we enounce some  known results
for  the operator $T_{\alpha, m}$. See \cite {IFR17}.

\begin{teo}\cite{IFR17}
Let $0\leq \alpha < n$, $m \in \N$ and let  $T_{\alpha,m}$ be the
integral operator defined by (\ref{eq: defT}). For $1\leq i \leq m$,
let $\Psi_i$ be Young functions,  $0\leq\alpha_i<n$ such that
$\alpha_1+\cdots + \alpha_m=n-\alpha$. Also suppose $k_i \in
S_{n-\alpha_i, \Psi_i}\cap H_{n-\alpha_i,\Psi_i}$ and let the matrices
$A_i$ satisfy  the hypothesis $(H)$.

\noindent If $\alpha=0$,  suppose $T_{0,m}$ is of strong type $(p_0,p_0)$ for some $1<p_0<\infty$.\\
If $\phi$ is the complementary of the functions $\Psi_1,\dots,\Psi_m$, then there exists $C>0
$ such that, for $0<\delta\leq 1$ and $f\in L_c^{\infty}(\R)$
\begin{equation}
M_{\delta}^{\sharp}|T_{\alpha,m}f|(x):= M^{\sharp}\left(|T_{\alpha,m}f|^{\delta}\right)(x)^{1/\delta} \leq C \sum_{i=1}^{m}M_{\alpha,\phi}f(A_i^{-1}x). \label{eq: Sharp2}
\end{equation}
\end{teo}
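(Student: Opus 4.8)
Fix $0<\delta\le1$, $f\in L_c^\infty(\R)$, a point $x\in\R$ and an arbitrary ball $B=B(x_0,r)$ with $x\in B$. Since $M^\sharp\big(|T_{\alpha,m}f|^\delta\big)(x)$ is comparable to $\sup_{B\ni x}\inf_c\frac1{|B|}\int_B\big| |T_{\alpha,m}f(y)|^\delta-c\big|\,dy$, and $\big||s|^\delta-|t|^\delta\big|\le|s-t|^\delta$, $|s+t|^\delta\le|s|^\delta+|t|^\delta$ for $0<\delta\le1$, it suffices to produce, for every such $B$, one constant $c$ satisfying
\begin{equation*}
\frac1{|B|}\int_B|T_{\alpha,m}f_1(y)|^\delta\,dy+\frac1{|B|}\int_B|T_{\alpha,m}f_2(y)-c|^\delta\,dy\ \lesssim\ \Big(\sum_{i=1}^m M_{\alpha,\phi}f(A_i^{-1}x)\Big)^{\delta},
\end{equation*}
where $f=f_1+f_2$, $f_1=f\chi_E$, $E=\bigcup_{i=1}^m A_i^{-1}(\kappa B)$ with $\kappa B=B(x_0,\kappa r)$, and $\kappa>1$ is a large constant — depending only on $n$, on $\|A_i\|_\infty,\|A_i^{-1}\|_\infty$, and on the H\"ormander constants $c_{\Psi_i}$ — chosen so that every ball averaged over below contains the relevant point $A_i^{-1}x$. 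We take $c=T_{\alpha,m}f_2(x_0)$, which is finite since $f\in L_c^\infty(\R)$ and $|A_iu-x_0|>\kappa r$ for all $i$ and all $u\in\operatorname{supp}f_2$; then $\big||T_{\alpha,m}f(y)|^\delta-|c|^\delta\big|\le|T_{\alpha,m}f_1(y)|^\delta+|T_{\alpha,m}f_2(y)-c|^\delta$, which reduces matters to the two integrals above.

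\emph{Local term.} If $\alpha=0$, then $T_{0,m}$ is of strong type $(p_0,p_0)$, hence of weak type $(p_0,p_0)$, and $\delta\le1<p_0$, so Kolmogorov's inequality gives $\big(\tfrac1{|B|}\int_B|T_{0,m}f_1|^\delta\big)^{1/\delta}\lesssim|B|^{-1/p_0}\|f\chi_E\|_{p_0}\lesssim\sum_i\|f\|_{p_0,A_i^{-1}(\kappa B)}\lesssim\sum_i M_{0,\phi}f(A_i^{-1}x)$, the last inequality using $|A_i^{-1}(\kappa B)|\approx|B|$, $A_i^{-1}x\in A_i^{-1}(\kappa B)$, and that $\phi$ dominates $t^{p_0}$ in the singular case. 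If $\alpha>0$, we instead estimate $|T_{\alpha,m}f_1(y)|\le\int_E|K(y,u)|\,|f(u)|\,du$ directly: decompose $E$ into dyadic annuli adapted to the $m$ singular directions $A_i^{-1}x_0$, on each annulus bound the ``closest'' factor via its size condition $k_i\in S_{n-\alpha_i,\Psi_i}$ and the remaining $m-1$ factors via theirs, and close with the generalized H\"older inequality \eqref{Holder} for $\Psi_1,\dots,\Psi_m,\phi$; summing the geometric series yields $|T_{\alpha,m}f_1(y)|\lesssim\sum_i M_{\alpha,\phi}f(A_i^{-1}x)$ for all $y\in B$. In either case the first integral is controlled.

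\emph{Far term.} For $y\in B$ we have $|T_{\alpha,m}f_2(y)-c|\le\int_{E^c}|K(y,u)-K(x_0,u)|\,|f(u)|\,du$, and we telescope the product,
\begin{equation*}
K(y,u)-K(x_0,u)=\sum_{j=1}^m\Big(\prod_{l<j}k_l(x_0-A_lu)\Big)\big(k_j(y-A_ju)-k_j(x_0-A_ju)\big)\Big(\prod_{l>j}k_l(y-A_lu)\Big).
\end{equation*}
For the $j$-th summand, decompose $E^c$ so that $|A_ju-x_0|\sim 2^k\kappa r$, $k\ge0$; after the substitution $w=A_ju-x_0$ the bracketed factor becomes $k_j(\cdot-(x_0-y))-k_j(\cdot)$ on $|w|\sim 2^k\kappa r$, and since $|x_0-y|<r<c_{\Psi_j}^{-1}\kappa r\le c_{\Psi_j}^{-1}2^k\kappa r$ the H\"ormander condition $k_j\in H_{n-\alpha_j,\Psi_j}$ applies; the remaining factors, by invertibility of $A_l-A_j$ for $l\ne j$ (hypothesis $(H)$), sit on scales comparable to $2^k\kappa r$ — at worst all but one of them on that scale — and are controlled by the size conditions $S_{n-\alpha_l,\Psi_l}$. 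Applying \eqref{Holder} on each annulus, the radial powers produced by the annular volume, the $m$ size/H\"ormander norms, and the fractional normalization of $M_{\alpha,\phi}$ combine to $(2^k\kappa r)^{\alpha_j}$ — this is where $\alpha_1+\dots+\alpha_m=n-\alpha$ enters — so the $j$-th summand is dominated by $M_{\alpha,\phi}f(A_j^{-1}x)$ times the convergent series $\sum_{k\ge0}(2^k\kappa r)^{\alpha_j}\|k_j(\cdot-(x_0-y))-k_j(\cdot)\|_{\Psi_j,|w|\sim 2^k\kappa r}\le C_{\Psi_j}$. Hence $|T_{\alpha,m}f_2(y)-c|\lesssim\sum_j M_{\alpha,\phi}f(A_j^{-1}x)$ uniformly for $y\in B$, which controls the second integral. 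Combining the two terms, taking the supremum over $B\ni x$ and extracting a $\delta$-th root gives \eqref{eq: Sharp2}.

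The step I expect to be the main obstacle is the bookkeeping of the far term: the dyadic decomposition has to be organized so that all $m$ singular directions $A_i^{-1}x_0$ and the perturbation $y-x_0$ are handled simultaneously, and this is exactly where hypothesis $(H)$ — equivalently, invertibility of each $A_i^{-1}-A_j^{-1}$ — is indispensable. It forces the singular directions to be genuinely separated and guarantees that, away from the annulus of the differentiated factor, the remaining $m-1$ kernel factors live at a comparable, harmless scale, so that the radial powers recombine precisely to the fractional scaling $(2^k\kappa r)^{\alpha_j}$ required to invoke the H\"ormander condition.
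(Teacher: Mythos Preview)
Your overall strategy --- split $f$ into a local part supported on $\bigcup_i A_i^{-1}(\kappa B)$ and a far part, subtract $T_{\alpha,m}f_2(x_0)$, telescope the product kernel, and invoke the H\"ormander condition on the differentiated factor and the size condition on the rest --- is exactly the route taken in the paper (and in \cite{IFR17}). Two steps, however, do not go through as written.

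\textbf{Local term, $\alpha=0$.} You pass from Kolmogorov with exponent $p_0$ to $M_{0,\phi}$ by asserting that ``$\phi$ dominates $t^{p_0}$ in the singular case''. There is no such hypothesis: $\phi$ is determined solely by the relation $\Psi_1^{-1}\cdots\Psi_m^{-1}\phi^{-1}\lesssim t$ and has no a priori connection with $p_0$ (e.g.\ if all $\Psi_i\equiv\infty$ then $\phi(t)=t$). The paper instead uses that the strong $(p_0,p_0)$ bound together with the H\"ormander conditions on the $k_i$ force $T_{0,m}$ to be of weak type $(1,1)$ (this is Lemma~5.3 in \cite{IFR17}); Kolmogorov with exponent $1$ then yields the $L^1$ average of $f$ over each $A_i^{-1}\tilde B$, which is dominated by $Mf(A_i^{-1}x)\le M_{\phi}f(A_i^{-1}x)$.

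\textbf{Far term.} Your assertion that, once $|A_ju-x_0|\sim 2^k\kappa r$, hypothesis~$(H)$ forces the remaining factors to ``sit on scales comparable to $2^k\kappa r$'' is not correct. Hypothesis~$(H)$ separates the singular \emph{directions} $A_l^{-1}x_0$, but it gives no uniform comparison between $|A_lu-x_0|$ and $|A_ju-x_0|$: the centres $A_l^{-1}x_0$ can be arbitrarily far from $A_j^{-1}x_0$ relative to $r$, so on your $j$-annulus another factor may live at scale $\kappa r$ (forcing a much worse size bound) or at scale $\gg 2^k\kappa r$. The paper resolves this by \emph{first} decomposing $(\bigcup_i\tilde B_i)^c$ into the regions
\[
Z^l=\Big\{z:\ |c_B-A_lz|\le |c_B-A_rz|\ \text{for all } r\neq l\Big\},
\]
and \emph{then}, on $Z^l$, decomposing by the scale of the closest factor, $|c_B-A_lz|\sim 2^{j+1}R$. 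On $Z^l$ every other factor is at scale $\ge 2^{j+1}R$, so a further dyadic sum over those larger scales together with $k_r\in S_{n-\alpha_r,\Psi_r}$ yields the correct power $(2^jR)^{-\alpha_r}$. Only after this does the telescoped H\"ormander term combine with the remaining size bounds and $\alpha_1+\cdots+\alpha_m=n-\alpha$ to produce the factor $(2^jR)^{\alpha_l}$ needed to close the sum. Without this minimum-distance decomposition (or an equivalent device) your annular bookkeeping does not close.
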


\begin{teo}\cite{IFR17}
Let $0\leq \alpha < n$ and $m \in \N$ and let  $T_{\alpha,m}$ be the
integral operator defined by (\ref{eq: defT}). For $1\leq i \leq m$,
let $\Psi_i$ be Young functions,  $0\leq\alpha_i<n$ such that
$\alpha_1+\cdots + \alpha_m=n-\alpha$. Also suppose $k_i \in
S_{n-\alpha_i, \Psi_i}\cap H_{n-\alpha_i,\Psi_i}$ and  that matrices
$A_i$ satisfy the hypothesis $(H)$.

\noindent  If $\alpha=0$,  suppose $T_{0,m}$ is of strong type $(p_0,p_0)$ for some $1<p_0<\infty$.\\
 Let $0<p<\infty$. If $\phi$ is the complementary of the functions
$\Psi_1,\dots,\Psi_m$, then there exists $C>0$ such that, for $f\in
L_c^{\infty}(\R)$ and $w\in A_{\infty}$,
$$\int_{\R} |T_{\alpha,m}f(x)|^pw(x)dx\leq C \sum_{i=1}^m\int_{\R} |M_{\alpha,\phi}f(x)|^pw(A_ix)dx,$$
whenever the left-hand side is finite.\\
\end{teo}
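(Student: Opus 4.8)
The proof is the classical Coifman--Fefferman scheme, now reduced to the pointwise bound (\ref{eq: Sharp2}) established in the previous theorem. Fix $0<\delta\le 1$ in the range for which (\ref{eq: Sharp2}) holds (one may take $\delta=1$ when $\alpha>0$ and any $0<\delta<1$ when $\alpha=0$), and write $M_\delta g:=M(|g|^\delta)^{1/\delta}$, so that $|g|\le M_\delta g$ a.e.\ by Lebesgue differentiation. The external input is the Fefferman--Stein inequality for $A_\infty$ weights: for $w\in A_\infty$, $0<p<\infty$ and $0<\delta<\infty$ there is a constant $C=C(n,p,\delta,[w]_{A_\infty})$ with
$$\int_{\R}\big(M_\delta g(x)\big)^p w(x)\,dx\;\le\;C\int_{\R}\big(M_\delta^{\sharp} g(x)\big)^p w(x)\,dx,$$
valid for every measurable $g$ for which the left-hand side is finite.

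First I would apply this with $g=T_{\alpha,m}f$. Since $f\in L_c^{\infty}(\R)$, $T_{\alpha,m}f$ is a well-defined measurable function, and the admissibility of the Fefferman--Stein step (finiteness of $\int_{\R}(M_\delta(T_{\alpha,m}f))^pw$) is deduced from the assumed finiteness of $\int_{\R}|T_{\alpha,m}f|^pw$ by the standard truncation: one applies the inequality to $g_{N}=\min\{|T_{\alpha,m}f|,N\}\chi_{B(0,N)}\in L_c^{\infty}(\R)$, for which all quantities are finite once $\delta$ is chosen small enough that the $|x|^{-n/\delta}$-decay of $M_\delta g_N$ is integrable against $w$ at infinity, controls the sharp-maximal side uniformly using the $L^{p_0}$- (respectively $L^p$--$L^q$-) boundedness of $T_{\alpha,m}$, and then lets $N\to\infty$ by monotone convergence on the left and dominated convergence on the right. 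Granting this,
$$\int_{\R}|T_{\alpha,m}f(x)|^p w(x)\,dx\;\le\;\int_{\R}\big(M_\delta(T_{\alpha,m}f)(x)\big)^p w(x)\,dx\;\le\;C\int_{\R}\big(M_\delta^{\sharp}|T_{\alpha,m}f|(x)\big)^p w(x)\,dx.$$
Now insert (\ref{eq: Sharp2}), namely $M_\delta^{\sharp}|T_{\alpha,m}f|(x)\le C\sum_{i=1}^m M_{\alpha,\phi}f(A_i^{-1}x)$, and use $\big(\sum_{i=1}^m a_i\big)^p\le \max\{1,m^{p-1}\}\sum_{i=1}^m a_i^p$ to obtain
$$\int_{\R}|T_{\alpha,m}f(x)|^p w(x)\,dx\;\le\;C\sum_{i=1}^m\int_{\R}\big(M_{\alpha,\phi}f(A_i^{-1}x)\big)^p w(x)\,dx.$$
Finally, for each $i$ the linear change of variables $x=A_iy$, legitimate since $A_i$ is invertible by $(H)$, gives $\int_{\R}\big(M_{\alpha,\phi}f(A_i^{-1}x)\big)^p w(x)\,dx=|\det A_i|\int_{\R}\big(M_{\alpha,\phi}f(y)\big)^p w(A_iy)\,dy$; absorbing the finitely many constants $|\det A_i|$ into $C$ yields exactly the asserted inequality.

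The only genuinely delicate point is the justification of the Fefferman--Stein step, that is, upgrading the hypothesis $\int_{\R}|T_{\alpha,m}f|^pw<\infty$ to the finiteness of $\int_{\R}(M_\delta(T_{\alpha,m}f))^pw$ demanded by the inequality; this is precisely what the truncation argument supplies, and it is the reason the statement carries the qualifier ``whenever the left-hand side is finite.'' The remaining ingredients --- the pointwise estimate (\ref{eq: Sharp2}), the elementary power-sum inequality, and the change of variables --- are entirely routine.
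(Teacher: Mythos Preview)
Your proposal is correct and follows the standard Coifman--Fefferman scheme: Fefferman--Stein for $A_\infty$ weights, then the pointwise sharp estimate (\ref{eq: Sharp2}), then a change of variables. Note that the paper does not prove this theorem itself---it is quoted from \cite{IFR17} as a previous result---but your argument is exactly the template the paper uses for the more general commutator case (Theorem \ref{Coifman}), where the base step $k=0$ is simply referred back to \cite{IFR17}. The only methodological difference worth mentioning is that the paper (and \cite{IFR17}) handle the finiteness issue by invoking extrapolation \cite{CUMP04} to reduce to a single exponent $p_0$ with $w\in A_{p_0/\delta}$, rather than by the truncation $g_N=\min\{|T_{\alpha,m}f|,N\}\chi_{B(0,N)}$ you sketch; both routes are standard, and yours is adequate given the ``whenever the left-hand side is finite'' qualifier in the statement.
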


\section{Main results}
In this section we present the mains results
\subsection{Pointwise estimate} To obtain an appropriate
maximal operator which controls in weighted $L^p$ norms the operator
$T_{\alpha,m,b}^k$  we need the following result:
\begin{teo}\label{SharpHorm}
Let $b \in BMO$, $0\leq \alpha < n$,  $k\in \N\cup \{0\}$, $m \in
\N$ and $1\leq i \leq m$. Let $\Psi_i$ be Young functions and
$0\leq\alpha_i<n$ such that $\alpha_1+\cdots + \alpha_n=n-\alpha$.
Let $T_{\alpha,m}$ be the integral operator defined by (\ref{eq:
defT}) and $T_{\alpha,m,b}^k$ be the $k$-order commutator of
$T_{\alpha,m}$.
Suppose that the matrices $A_i$ satisfy the hypothesis $(H)$ and $k_i \in S_{n-\alpha_i, \Psi_i}\cap H_{n-\alpha_i,\Psi_i,k}$.\\
\noindent If $\alpha=0$,  let  $T_{0,m}$ be of strong type $(p_0,p_0)$ for some $1<p_0<\infty$.\\
Let $\varphi_k(t)=t\log(e+t)^k$ and let $\phi$ be  a Young function
such that
$\Psi_1^{-1}(t)\cdots\Psi_m^{-1}(t)\overline{\varphi_k}^{-1}(t)\phi^{-1}(t)\lesssim
t$ for $t\geq t_0$, some $t_0>0$.

Then, there exists $0<C=C(n,\alpha,A_1,...,A_m)$ such that, for
$0<\delta<\epsilon \leq 1$ and $f\in L_c^{\infty}(\R)$
\begin{equation}
M_{\delta}^{\sharp}|T_{\alpha,m,b}^kf|(x)\leq C \sum_{l=0}^{k-1}\|b\|_{BMO}^{k-l} M_{\epsilon}(T_{\alpha,m,b}^{l})+C\|b\|_{BMO}^k \sum_{i=1}^{m}
M_{\alpha,\phi}f(A_i^{-1}x). \label{eq: Sharp2}
\end{equation}
\end{teo}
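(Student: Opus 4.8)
The plan is to prove the pointwise estimate \eqref{eq: Sharp2} by adapting the standard Fefferman--Stein argument for commutators of singular and fractional integrals (as in the classical work of Pérez and collaborators) to the present setting, where the novelty is that each $k_i$ only satisfies a generalized fractional H\"ormander condition $H_{n-\alpha_i,\Psi_i,k}$ of Young type with the extra logarithmic power $m^k$. First I would fix a ball $B=B(x_0,R)$ containing the point $x$ at which I want to estimate $M_\delta^\sharp|T_{\alpha,m,b}^kf|(x)$, and recall that it suffices to produce a constant $a$ (depending on $B$) so that $\left(\frac1{|B|}\int_B \big| |T_{\alpha,m,b}^kf(y)|^\delta - |a|^\delta\big|\,dy\right)^{1/\delta}$ is bounded by the right-hand side. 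Writing $b_B$ for the average of $b$ over a suitable dilate of $B$, the key algebraic identity is the expansion
\begin{align*}
T_{\alpha,m,b}^kf(y) = \sum_{j=0}^{k}\binom{k}{j}(b(y)-b_B)^{k-j}(-1)^{k-j}\,T_{\alpha,m}\big((b-b_B)^{j}f\big)(y),
\end{align*}
which reduces the problem to controlling: (i) the terms with $j<k$, which carry a factor $(b(y)-b_B)^{k-j}$ and an operator $T_{\alpha,m}$ applied to $(b-b_B)^jf$ --- these are handled by H\"older's inequality on $B$ together with John--Nirenberg to absorb the BMO powers, and they produce the $\sum_{l=0}^{k-1}\|b\|_{BMO}^{k-l}M_\epsilon(T_{\alpha,m,b}^l)$ terms after recognizing $T_{\alpha,m}((b-b_B)^jf)$ as built from lower order commutators; and (ii) the single term $j=k$, namely $T_{\alpha,m}\big((b-b_B)^kf\big)(y)$, which is the genuinely new contribution.

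For the $j=k$ term I would split $f = f\chi_{2\kappa B} + f\chi_{(2\kappa B)^c} =: f_0 + f_\infty$ for an appropriate dilation constant $\kappa$ depending on the matrices $A_i$ (chosen so that the H\"ormander cancellation can be exploited for $y$ ranging over $B$), and take the constant $a := T_{\alpha,m}\big((b-b_B)^kf_\infty\big)(x_0)$. The local part $T_{\alpha,m}\big((b-b_B)^kf_0\big)$ is estimated in $L^\delta(B)$ via Kolmogorov's inequality together with the weak-type behavior of $T_{\alpha,m}$ (the strong $(p_0,p_0)$ hypothesis when $\alpha=0$, or the fractional off-diagonal bound otherwise), followed by the generalized H\"older inequality \eqref{Holder}: one writes $|K(x,y)|\le \prod_i |k_i(x-A_iy)|$, distributes the mass over the annuli, and uses that $\Psi_1^{-1}\cdots\Psi_m^{-1}\overline{\varphi_k}^{-1}\phi^{-1}(t)\lesssim t$ so that $\|(b-b_B)^kf_0\|$ against the kernel is controlled by $\|(b-b_B)^k\|_{\overline{\varphi_k},\,\cdot}\,\|f\|_{\phi,\,\cdot}\prod_i\|k_i\|_{\Psi_i,\,\cdot}$; since $\|(b-b_B)^k\|_{\overline{\varphi_k},B}\lesssim \|b\|_{BMO}^k$ by the exponential John--Nirenberg estimate (which is exactly why $\varphi_k(t)=t\log(e+t)^k$ and its conjugate appear), and $\prod_i\|k_i\|_{\Psi_i,|x|\sim 2^j r}\lesssim (2^jr)^{-(n-\alpha)}\cdot(2^jr)^{?}$ via the size conditions $k_i\in S_{n-\alpha_i,\Psi_i}$, this collapses to $\|b\|_{BMO}^k\,M_{\alpha,\phi}f(A_i^{-1}x)$.

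The global part requires comparing $T_{\alpha,m}\big((b-b_B)^kf_\infty\big)(y)$ with its value at $x_0$, i.e.\ estimating
\begin{align*}
\int_{(2\kappa B)^c} \big(b(z)-b_B\big)^k\big(K(y,z)-K(x_0,z)\big)f(z)\,dz
\end{align*}
uniformly for $y\in B$. Here one changes variables $z\mapsto A_i^{-1}z$ for each of the $m$ factors in turn, peels off one $k_i$ at a time using the H\"ormander difference $k_i(\cdot-A_iy)-k_i(\cdot-A_ix_0)$ on the relevant factor while bounding the remaining $m-1$ factors by their size conditions, and sums the resulting telescoping series of annular pieces. The extra weight $m^k$ in the definition of $H_{n-\alpha_i,\Psi_i,k}$ is precisely what is needed to absorb the term $\|(b-b_B)^k\chi_{|z|\sim 2^jr}\|$, which grows like $\|b\|_{BMO}^k(j+1)^k$ on the $j$-th annulus by John--Nirenberg; pairing $m^k\cdot (\text{size on other factors})$ against this and applying generalized H\"older with the conjugate function $\phi$ yields a convergent sum bounded by $\|b\|_{BMO}^k M_{\alpha,\phi}f(A_i^{-1}x)$. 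I expect the main obstacle to be exactly this bookkeeping in the global estimate: tracking the $m$ simultaneous changes of variables so that each annulus is measured with respect to the correct matrix $A_i$, verifying that the ``other'' kernels' contributions genuinely produce the scaling $(2^jr)^{\alpha_i - n}$ needed for the fractional maximal function to appear, and checking that the $j^k$ growth is dominated by the $m^k$-weighted H\"ormander sum --- all while keeping the constants independent of $b$ beyond the explicit $\|b\|_{BMO}^k$ factor. Once both parts are assembled, taking the infimum over $a$ and combining with the lower-order terms from step (i) gives \eqref{eq: Sharp2}.
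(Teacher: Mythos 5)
Your overall scheme (Fefferman--Stein estimate with a well chosen constant, expansion of $T_{\alpha,m,b}^k$ into lower--order commutator terms plus $T_{\alpha,m}((b-c)^k f)$, local/global splitting, John--Nirenberg through $\overline{\varphi_k}$, and absorption of the logarithmic growth by the $H_{n-\alpha_i,\Psi_i,k}$ condition) is the same as the paper's, but there is a genuine gap in the geometry of your splitting. You cut $f=f\chi_{2\kappa B}+f\chi_{(2\kappa B)^c}$ with $\kappa$ depending only on the matrices. For $y\in B=B(c_B,R)$ the kernel $K(y,z)=\prod_i k_i(y-A_iz)$ is singular, as a function of $z$, near each of the $m$ sets $A_i^{-1}B(c_B,2R)$: these are ellipsoids of diameter $\sim R$ centered at $A_i^{-1}c_B$, and $|A_i^{-1}c_B-c_B|$ can be of order $|c_B|$, so no dilate $\kappa B$ with $\kappa=\kappa(A_1,\dots,A_m)$ contains them. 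Consequently your ``global'' region contains points $z$ with $|c_B-A_iz|\ll R$, where the H\"ormander condition cannot be invoked (it requires $R\lesssim|c_B-A_iz|$, i.e.\ that the translation $|y-c_B|<R$ be small relative to the annulus radius) and the size estimates on the undifferenced factors fail; at the same time your ``local'' region misses exactly the sets over which the averages producing $M_{\alpha,\phi}f(A_i^{-1}x)$ must be taken. The paper's proof sets $\tilde{B}=2B$, $\tilde{B}_i=A_i^{-1}\tilde{B}$, localizes with $f_1=f\chi_{\cup_i\tilde{B}_i}$, and decomposes the complement according to which $|c_B-A_lz|$ is minimal, with annuli $|c_B-A_lz|\sim 2^{j}R$; this matrix--adapted localization is the essential point your proposal is missing, and it cannot be recovered by any choice of the constant $\kappa$.

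A second, related defect is the choice of the subtraction constant $b_B$ (average over a dilate of $B$). In the local part you need $\|(b-b_B)^k\|_{\overline{\varphi_k},\tilde{B}_i}\lesssim\|b\|_{BMO}^k$, but $\tilde{B}_i$ may lie at distance $\sim|c_B|$ from $B$, and for two balls of radius $\sim R$ at distance $d$ one only has $|b_{B_1}-b_{B_2}|\lesssim\|b\|_{BMO}\log(e+d/R)$, which is unbounded here. The paper subtracts instead the average over $\tilde{B}\cup\tilde{B}_1\cup\cdots\cup\tilde{B}_m$; since the union has measure comparable to each piece, all the relevant averages are comparable at cost $C\|b\|_{BMO}$ (Lemma 4.1), and on the $j$--th annulus the price is the admissible factor $j\|b\|_{BMO}$ (more generally $j^k$), which is exactly what the $H_{n-\alpha_i,\Psi_i,k}$ sum absorbs. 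Your binomial expansion into lower--order commutators, as opposed to the paper's direct identity and induction, is only a cosmetic difference; likewise your use of Kolmogorov for the local part is fine for $\alpha=0$ (the paper does the same) while for $\alpha>0$ one argues directly from the size conditions, since no weak--type bound is assumed. The real missing ideas are the localization on $\cup_i A_i^{-1}\tilde{B}$ and the BMO constant adapted to that union.
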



\begin{table}[h!]
\begin{center}
\caption{Examples}
\vspace{-0.5cm}
\begin{tabular}{|r c|c|c|}
\hline &  $\Psi_i$\quad $1\leq i\leq m$ & $\phi$ & $M_{\alpha,\phi}$ \\
\hline
(i) & $\infty$ & $t\log(e+t)^k$  & $M_{\alpha,L\log L^k}$\\
(ii) & $t^{r_i},$ $1<r_i<\infty$ & $t^s\log(e+t)^{sk}$, $\displaystyle \sum_{i=1}^m \frac1{r_i}+\frac1{s}=1$ $$ & $M_{\alpha,L^s\log L^{sk}}$ \\ 
(iii) & $\psi_1=t^r$, $\psi_2(t)=\exp(t)-1$, $1<r<\infty$ & $t^{r'}\log(e+t)^{(k+1)r'}$ & $M_{\alpha,L^{r'}\log L^{r'(k+1)}}$ \\
\hline
\end{tabular}
\end{center}
\end{table}

This Theorem is a generalization of several known results. The  table ilustrate some example of this results: the example $(i)$ with $m=1$ is a classical example proved in \cite{BLR11}, $(ii)$ with $k=0$ is the example of fractional rough kernel proved in \cite{RU14} and the last example $(iii)$ is the commutator of the example given in \cite{IFR17}.

\subsection{One weight inequalities}

In this subsection, we prove the boundedness of the o\-pe\-ra\-tor,
$T_{\alpha,m,b}^k$  in two different ways, using the Coifman
inequality and  using a Cauchy integral formula. Also  we give a
weighted $BMO$ estimate for weights in the class $A(\frac{n}{\alpha
r},\infty)$.

\begin{teo}\label{TeoFuerte}
Let $0\leq \alpha < n$, $1<p<n/\alpha$ and
$\frac1{q}=\frac1{p}-\frac{\alpha}{n}$.  Let $T_{\alpha,m}$ be the
integral operator defined by (\ref{eq: defT}) under the hypothesis
of Theorem \ref{Coifman} and $T_{\alpha,m,b}^k$ be the $k$-order
commutator of $T_{\alpha,m}$.
 Suppose that one of the following hypothesis holds,

\begin{enumerate}[(a)]
\item If there exists $1<r<p$ such that ${\kappa}_r<\infty$. Let $\eta$ be a Young function such that $\eta^{-1}(t)t^{\frac{\alpha}{n}}\lesssim \phi^{-1}(t)$ for every $t>0$. If $\phi^{1+\frac{sn}{n-\alpha}} \in B_{\frac{sn}{n-\alpha}}$ for every $s>r(n-\alpha)/(n-\alpha r)$ and $w^r\in A(\frac{p}{r},\frac{q}{r})$,

\item Suppose that there exist $B$ and $C$ be Young functions such that $B^{-1}(t)C^{-1}(t)\leq \tilde{c} \phi^{-1}(t)$, $t>t_0>0$, $C\in B_p^{\alpha}$ and $w\in A_{q,B}$,

\item Suppose that the operator $T_{\alpha,m}$ is bounded from $L^p(w^p)$ into $L^q(w^q)$ for all $w\in A_{p,q}$.

\end{enumerate}

If $w$ satisfies the condition \eqref{eq: condW}  then there exists $c>0$ such that, for every $f\in L^p(w^p)$,
$$ \|T_{\alpha,m,b}^kf\|_{L^q(w^q)}\leq c \|b\|_{BMO}^{k}\|f\|_{L^p(w^p)}.$$

\end{teo}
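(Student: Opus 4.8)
The plan is to deduce all three cases from the Coifman estimate (Theorem \ref{Coifman}) together with the appropriate boundedness of the generalized fractional maximal operator $M_{\alpha,\phi}$ on the relevant weighted spaces. Since $w$ satisfies \eqref{eq: condW}, the second (self-improved) conclusion of Theorem \ref{Coifman} applies once we check $w^q\in A_\infty$ (which holds in all three cases, since in (a) $w^r\in A(p/r,q/r)$ forces $w^q\in A_\infty$, in (b) $w\in A_{q,B}\subset A_{q,q}$ so $w^q\in A_q\subset A_\infty$, and in (c) $w\in A_{p,q}$ gives $w^q\in A_q$). Thus, modulo the finiteness of the left-hand side, it suffices to prove in each case the a priori inequality
\begin{equation}\label{eq: reduccionplan}
\|M_{\alpha,\phi}f\|_{L^q(w^q)}\leq c\,\|f\|_{L^p(w^p)}.
\end{equation}

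For case (b) this is essentially immediate: the factorization $B^{-1}(t)C^{-1}(t)\lesssim\phi^{-1}(t)$ together with the generalized Hölder inequality \eqref{Holder} gives the pointwise bound $M_{\alpha,\phi}f(x)\lesssim M_{\alpha,C}(f\,)$-type control after inserting $w$, and then the two-weight theory for fractional maximal operators with bump conditions (Cruz-Uribe--Moen) — $C\in B_p^{\alpha}$ and $w\in A_{q,B}$ — yields \eqref{eq: reduccionplan}. For case (c), one notes $M_{\alpha,\phi}$ is controlled by the operator $T_{\alpha,m}$ itself (or rather: the hypothesis that $T_{\alpha,m}\colon L^p(w^p)\to L^q(w^q)$ for all $w\in A_{p,q}$ together with $\phi$ being the complementary function of the $\Psi_i$ and the previous results of \cite{IFR17} give the needed bound on $M_{\alpha,\phi}$), so again \eqref{eq: reduccionplan} follows. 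Case (a) is the substantive one: here one uses Remark \ref{Malphar}-type rescaling, writing $M_{\alpha,\phi}f=(M_{\alpha',\eta'}|f|^r)^{1/r}$-style identities dictated by the hypotheses $\eta^{-1}(t)t^{\alpha/n}\lesssim\phi^{-1}(t)$ and the $B_{sn/(n-\alpha)}$ condition on powers of $\phi$, reducing to a weighted norm inequality for a fractional maximal operator with weight $w^r\in A(p/r,q/r)$ — which is precisely the Muckenhoupt--Wheeden theorem at the rescaled exponents $p/r$, $q/r$.

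The main obstacle I expect is the bookkeeping in case (a): one must verify that the hypotheses on $\phi$, $\eta$, $r$, $s$ combine correctly so that after the $r$-th power rescaling the resulting maximal operator falls under a known weighted bound, and one must track how \eqref{eq: condW} interacts with the $A_i$-translates appearing in the first conclusion of Theorem \ref{Coifman} — this is exactly where Proposition \ref{obsMA} is needed, to replace $M_{\alpha,\phi}f(A_i^{-1}x)$ by $M_{\alpha,\phi}f(x)$ up to constants and absorb the change of variables into the weight. A secondary technical point is justifying the a priori finiteness of $\|T_{\alpha,m,b}^kf\|_{L^q(w^q)}$ for $f\in L^p(w^p)$, so that Theorem \ref{Coifman} may legitimately be invoked; this is handled in the standard way by first proving the estimate for nice $f$ (bounded, compact support) and a truncation/limiting argument, using that for such $f$ the commutator lies in the relevant space.
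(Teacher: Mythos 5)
Your treatment of cases (a) and (b) matches the paper's: both are obtained by combining the Coifman estimate of Theorem \ref{Coifman} (in its second form, since $w$ satisfies \eqref{eq: condW}) with a weighted $L^p(w^p)\to L^q(w^q)$ bound for $M_{\alpha,\phi}$ — the paper simply quotes this bound from \cite{BDP14} for (a) and from the bump-condition theorem in \cite{LibroPesos} (Cruz-Uribe--Moen type) for (b), rather than re-deriving it by rescaling, but the mechanism is the same.

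Case (c), however, contains a genuine gap. You propose to again reduce to $\|M_{\alpha,\phi}f\|_{L^q(w^q)}\lesssim\|f\|_{L^p(w^p)}$, claiming that the hypothesis ``$T_{\alpha,m}$ bounded from $L^p(w^p)$ into $L^q(w^q)$ for all $w\in A_{p,q}$'' together with the results of \cite{IFR17} controls $M_{\alpha,\phi}$. No such control exists: the Coifman inequality goes in the opposite direction (it bounds $T_{\alpha,m,b}^k$ by $M_{\alpha,\phi}$), and in case (c) there is no $B_p^{\alpha}$-type or bump hypothesis on $\phi$ at all, so $M_{\alpha,\phi}$ need not map $L^p(w^p)$ into $L^q(w^q)$ even for $w\equiv 1$ (e.g.\ $\phi(t)=t^{r'}\log(e+t)^{r'(k+1)}$ as in the paper's Example (iii) requires $p>r'$, which is not assumed in (c)). This is exactly why the paper abandons the Coifman route for (c) and instead uses the Cauchy integral ``conjugation'' method: writing
\begin{equation*}
T_{b}^{k}f=\frac{k!}{2\pi i}\int_{|z|=\epsilon}\frac{T_{z}f}{z^{k+1}}\,dz,\qquad T_{z}f=e^{zb}\,T_{\alpha,m}(fe^{-zb}),
\end{equation*}
applying Minkowski's inequality, and using Lemma \ref{expAp} to check that $\nu=we^{\operatorname{Re}(z)b}$ stays in $A_{p,q}$ when $|z|\leq\epsilon_0\simeq\min\{1,p'/q\}/\|b\|_{BMO}$; the hypothesis ``bounded for \emph{all} $w\in A_{p,q}$'' is then applied to the conjugated weight $\nu$, and the radius $\epsilon_0$ produces the factor $\|b\|_{BMO}^{k}$. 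Without this (or some substitute argument), your proposal does not prove case (c). A minor further caveat: in case (a) your sketch of reducing to ``Muckenhoupt--Wheeden at rescaled exponents'' would still need the precise interplay of $\eta$, $\phi$, $r$, $s$ verified; the paper avoids this by invoking \cite{BDP14} directly.
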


 \begin{teo}\label{BMOw}
 Let $T_{\alpha,m}$ be
the integral operator defined by (\ref{eq: defT}) under the
hypothesis of Theorem \ref{Coifman} and $T_{\alpha,m,b}^k$ be the
$k$-order commutator of $T_{\alpha,m}$. Suppose there exists $r>1$
such that ${\kappa}_r<\infty$. If $w^r\in A(\frac{n}{\alpha
r},\infty)$ and satisfies (\ref{eq: condW}), then there exists $C>0$
such that for $f\in L_c^{\infty}(\R)$,
 $$\||T_{\alpha,m,b}^kf|\|_w\leq C\|b\|_{BMO}^k\|fw\|_{L^{n/\alpha}}.$$
 \end{teo}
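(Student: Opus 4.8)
The plan is to deduce the weighted $BMO$ estimate from the sharp maximal pointwise bound in Theorem \ref{SharpHorm}, proceeding by induction on the commutator order $k$ in the same spirit as the Coifman estimate of Theorem \ref{Coifman}. First recall that $\||g|\|_w\simeq \|wM^{\#}g\|_{\infty}$, so it suffices to bound $\|wM^{\#}(T_{\alpha,m,b}^kf)\|_{\infty}$; working with $M_{\delta}^{\#}$ for a small $\delta$ (using that $M^{\#}g\le c_\delta M_\delta^{\#}g$) we apply \eqref{eq: Sharp2}, which reduces the problem to controlling the two terms $\|b\|_{BMO}^{k-l}M_\epsilon(T_{\alpha,m,b}^{l}f)$ for $l=0,\dots,k-1$ and $\|b\|_{BMO}^k\sum_i M_{\alpha,\phi}f(A_i^{-1}\cdot)$, each multiplied by $w$ and measured in $L^\infty$. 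I would first dispose of the main term: since $\eta^{-1}(t)t^{\alpha/n}\lesssim\phi^{-1}(t)$ type control (or more simply the hypothesis ${\kappa}_r<\infty$ together with a Hölder/Young argument as in \cite{IFR17}) gives $M_{\alpha,\phi}f\lesssim (M_{\alpha r}(|f|^r))^{1/r}$ up to the bump, one has $\|wM_{\alpha,\phi}f(A_i^{-1}\cdot)\|_\infty\lesssim \|w_{A_i}M_{\alpha,\phi}f\|_\infty$ after the change of variables, and then Proposition \ref{obsMA} combined with condition \eqref{eq: condW} replaces $w_{A_i}$ by $w$. The boundedness $\|wM_{\alpha,\phi}f\|_{\infty}\lesssim\|fw\|_{L^{n/\alpha}}$ is exactly where the hypothesis $w^r\in A(\tfrac{n}{\alpha r},\infty)$ enters: using $M_{\alpha,\phi}f\lesssim (M_{\alpha r}|f|^r)^{1/r}$ it becomes $\|w^r M_{\alpha r}(|f|^r)\|_{\infty}\lesssim \||f|^r w^r\|_{L^{n/(\alpha r)}}$, which is the known endpoint ($L^{n/\beta}\to L^\infty$ weighted) bound for the fractional maximal operator $M_{\beta}$ with $\beta=\alpha r$ and weight in $A(\tfrac{n}{\beta},\infty)$.

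For the lower-order terms $M_\epsilon(T_{\alpha,m,b}^{l}f)$ with $l<k$, the induction hypothesis gives a bound on $\||T_{\alpha,m,b}^{l}f|\|_w$, but here we need an $L^\infty$ bound on $wM_\epsilon(T_{\alpha,m,b}^{l}f)$, which is a genuinely stronger quantity than $wM^{\#}$. The device I would use is the same one that makes these $BMO$ arguments work: pick $\epsilon$ so that $L^{n/\alpha}\subset$ the relevant space, and note that $M_\epsilon g=(M(|g|^\epsilon))^{1/\epsilon}$, so $\|wM_\epsilon(T^l f)\|_\infty\lesssim \|w (M|T^lf|^\epsilon)^{1/\epsilon}\|_\infty$; one then uses that $w^r\in A(\tfrac{n}{\alpha r},\infty)$ forces $w^{-1}\in A_1$ up to the fractional scaling, together with the strong $L^{p}(w^p)\to L^q(w^q)$ estimate for $T_{\alpha,m,b}^{l}$ already available from Theorem \ref{TeoFuerte} (applied with an exponent pair making $q$ as large as needed). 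More directly, since $w^r\in A(\tfrac n{\alpha r},\infty)$ one has by the self-improving (reverse Hölder) property that $w^r\in A(\tfrac p r,\tfrac q r)$ for suitable $p<n/\alpha<$ with $1/q=1/p-\alpha/n$ and $q$ large; then Theorem \ref{TeoFuerte}(a) yields $\|T_{\alpha,m,b}^{l}f\|_{L^q(w^q)}\lesssim\|b\|_{BMO}^{l}\|f\|_{L^p(w^p)}$, and letting $q\to\infty$ (equivalently $p\uparrow n/\alpha$) recovers precisely $\|w T_{\alpha,m,b}^{l}f\|_{\infty}\lesssim\|b\|_{BMO}^{l}\|fw\|_{L^{n/\alpha}}$. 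Combining with $\|b\|_{BMO}^{k-l}$ and summing over $l$ closes the recursion, giving the total constant $C\|b\|_{BMO}^k$.

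The main obstacle is the second paragraph: passing from the sharp-maximal control to an honest $L^\infty(w)$ control of $M_\epsilon$ of the lower-order commutators. Naively $M^{\#}g\in L^\infty$ does not imply $M_\epsilon g\in L^\infty$, so one cannot simply iterate the $BMO(w)$ bound; the argument must instead feed the lower-order commutators through an already-established $L^p(w^p)\to L^q(w^q)$ inequality and then take a limit $q\to\infty$, which requires checking that the constants in Theorem \ref{TeoFuerte}(a) stay controlled as $p\uparrow n/\alpha$ — this is where the reverse Hölder / openness of the condition $w^r\in A(\tfrac n{\alpha r},\infty)$ is used, and where one must be careful that the Young-function bump hypotheses ($\phi^{1+sn/(n-\alpha)}\in B_{sn/(n-\alpha)}$) remain valid for the relevant range of $s$. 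A secondary technical point is justifying the a.e. finiteness needed to invoke Theorem \ref{SharpHorm} and the usual truncation argument for $f\in L_c^\infty(\R)$, which is routine. Once these are in place, the change of variables, Proposition \ref{obsMA}, and condition \eqref{eq: condW} handle the matrices exactly as in the proof of Theorem \ref{Coifman}.
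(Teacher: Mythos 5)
Your treatment of the main term coincides with the paper's argument: reduce via $\||g|\|_w\simeq\|wM^{\#}g\|_{\infty}$, apply the pointwise estimate of Theorem \ref{SharpHorm}, compare $M_{\alpha,\phi}\leq \kappa_r M_{\alpha,r}$ (this is exactly where the hypothesis $\kappa_r<\infty$ enters), invoke the endpoint bound $\|wM_{\alpha,r}f\|_{\infty}\leq C\|fw\|_{n/\alpha}$ for $w^r\in A(\frac{n}{\alpha r},\infty)$ (the paper quotes this from \cite{LMRT08}), and then use the change of variables together with Proposition \ref{obsMA} and condition \eqref{eq: condW} to absorb the matrices $A_i$. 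Up to that point your plan is the paper's proof.

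The genuine gap is in your handling of the lower-order terms $M_{\epsilon}(T_{\alpha,m,b}^{l}f)$, $l<k$, which you correctly single out as the crux. Your proposed resolution — apply Theorem \ref{TeoFuerte}(a) for $p<n/\alpha$ and let $q\to\infty$ to get $\|w\,T_{\alpha,m,b}^{l}f\|_{\infty}\lesssim\|b\|_{BMO}^{l}\|fw\|_{n/\alpha}$ — cannot work. First, the constants are not uniform as $p\uparrow n/\alpha$: the hypotheses $w^r\in A(\frac{p}{r},\frac{q}{r})$ and the bump conditions themselves degenerate in that limit, so no limiting argument is available. Second, and decisively, the limiting inequality is false in general: already for $k=0$, $m=1$, $A_1=I$, $k_1(x)=|x|^{\alpha-n}$ and $w\equiv 1$ (which satisfies $w^r\in A(\frac{n}{\alpha r},\infty)$ and \eqref{eq: condW}), the operator $I_{\alpha}$ does not map $L^{n/\alpha}$ into $L^{\infty}$; the correct endpoint target is $BMO$, which is precisely why the theorem is stated as a weighted $BMO$ estimate and not an $L^{\infty}(w)$ estimate. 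Even if such a bound held, you would still need $\|wM_{\epsilon}(T_{\alpha,m,b}^{l}f)\|_{\infty}$, which in addition requires $wM_{\epsilon}(w^{-1})\in L^{\infty}$; your remark that $w^r\in A(\frac{n}{\alpha r},\infty)$ gives this ``up to the fractional scaling'' is not a proof. The paper does not iterate any $L^{\infty}(w)$ bound on the lower-order commutators at all: it passes from Theorem \ref{SharpHorm} to the weighted $BMO$ conclusion by invoking Lemma 4.1 of \cite{CLO17} together with \eqref{eq: MBMO}, a lemma designed precisely to dispose of the $M_{\epsilon}(T_{\alpha,m,b}^{l}f)$ terms without ever bounding them in $L^{\infty}(w)$ by $\|fw\|_{n/\alpha}$. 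So your outline is complete only for $k=0$; for $k\geq 1$ the recursion as you propose it does not close.
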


\subsection{Two weights inequalities}

 If $T=T_{\alpha, m}$ is defined by (\ref{eq: defT}), then its
adjoint $T^{*}$ is
$$T^{*}g(x)=\int \tilde{k}_1(x -A_1^{-1}y)\cdots \tilde{k}_m(x -A_m^{-1}y)g(y)dy,$$
where $\tilde{k}_i(x)=k_i(-A_ix)$.  If $T^{*}$ satisfies hypothesis
of Theorem \ref{Coifman} then
$$
\int_{\R} |T^*f(x)|^qw(x)dx\leq c\int_{\R} \sum_{i=1}^m\left(M_{\alpha,\phi}f(A_i
x)\right)^qw(x)dx.
$$
for all $0<q<\infty$ and $w\in A_\infty$ 

\begin{teo}\label{fuerte2pesos}
Let $\phi$ be a Young function, $0\leq \alpha < n$ and $1<p<\infty$. Suppose that there exist Young functions $\E, \F$ such that $\E \in B_{p'}$ and $\E^{-1}(t)\F^{-1}(t)\leq \phi^{-1}(t)$. Set $\D(t)=\F(t^{1/p})$.

Let $T$ be a linear operator such that its adjoint $T^*$ satisfies
\begin{equation}\label{Coifman*}
\int_{\R} |T^*f(x)|^qw(x)dx\leq c\int_{\R} \sum_{i=1}^m\left(M_{\alpha,\phi}f(A_i
x)\right)^qw(x)dx,
\end{equation}
for all $0<q<\infty$ and $w\in A_{\infty}$.

 Then for any weight $u$,
\begin{align}\label{fuerteuMu}
\int_{\R}|Tf(x)|^pu(x)dx 
 \leq c\int_{\R} |f(x)|^p \sum_{i=1}^mM_{\alpha p,\D}u(A_i
x) dx.
\end{align}
\end{teo}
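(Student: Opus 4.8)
The plan is to derive the two-weight bound \eqref{fuerteuMu} from the adjoint Coifman estimate \eqref{Coifman*} by a duality argument, exactly in the spirit of the Sawyer--Wheeden / Cruz-Uribe--Moen transference from a one-weight $A_\infty$ inequality to a two-weight $(u,Su)$ inequality. First I would fix a weight $u$ and test $Tf$ against an arbitrary nonnegative $g\in L^{p'}(\sigma)$ for a suitable auxiliary weight $\sigma$; more precisely, by duality in $L^p(u)$ it suffices to bound $\int_{\R} (Tf)\, h\, dx$ for $h\ge 0$ with $\|h u^{-1/p}\|_{L^{p'}}=\|h\,u^{-1/p}\|_{p'}\le 1$, i.e. $h = u^{1/p}\, \psi$ with $\|\psi\|_{p'}\le 1$. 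Moving $T$ to the other side, $\int (Tf) h = \int f\, (T^* h)$, and then applying H\"older with the pair $u^{1/p}\sigma^{-1/p}$ versus its reciprocal will reduce matters to controlling $\|T^* h\|_{L^{q_0}(v)}$ for an appropriate exponent and an $A_\infty$ weight $v$ built from $u$ and $h$. The standard choice is $v$ a power of a Rubio de Francia iteration algorithm applied to $h$-dependent data so that $v\in A_1\subset A_\infty$, which is what lets us invoke \eqref{Coifman*}.

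The key steps, in order, are: (1) set up the duality and write $\int (Tf)h = \int f\,(T^*h)$; (2) choose the $A_\infty$ weight $v$ via a Rubio de Francia construction so that \eqref{Coifman*} applies with $w=v$ and a conveniently small exponent $q$ (one takes $q$ near $1$, or uses the precise homogeneity, so that the resulting maximal-function term has the right scaling); (3) insert \eqref{Coifman*} to replace $T^*h$ by $\sum_i M_{\alpha,\phi}h(A_i\,\cdot)$; (4) change variables $x\mapsto A_i^{-1}x$ in each term, which is where the $A_i x$ arguments in the final inequality are generated, and use that the change of variables only costs a constant $c_{A_i,n}$; (5) invoke the hypothesis $\E^{-1}(t)\F^{-1}(t)\le \phi^{-1}(t)$ together with the generalized H\"older inequality \eqref{Holder} to split the average $\|h\|_{\phi,B}$ controlling $M_{\alpha,\phi}h$ into $\|h u^{-1/p}\|_{\E,B}\cdot\|u^{1/p}\|_{\F,B}$-type factors; (6) use $\E\in B_{p'}$, via the Cruz-Uribe--Moen boundedness $M_{\alpha',\E}:L^{p'}\to L^{q'}$ (or the plain $M_\E:L^{p'}\to L^{p'}$ when $\alpha'=0$), to absorb the factor involving $h u^{-1/p}$ against $\|\psi\|_{p'}\le 1$; (7) recognize the remaining factor $|B|^{\alpha p/n}\|u^{1/p}\|_{\F,B}^{p} \sim |B|^{\alpha p/n}\|u\|_{\D,B}$ (using $\D(t)=\F(t^{1/p})$, so that $\|u^{1/p}\|_{\F,B}^p = \|u\|_{\D,B}$) as precisely $M_{\alpha p,\D}u$, and collect the $\sum_i M_{\alpha p,\D}u(A_i x)$ on the right with the factor $|f(x)|^p$.

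The main obstacle I anticipate is step (6) together with the bookkeeping of exponents in steps (2) and (5): one has to pick the homogeneity parameter (the exponent $q$ in \eqref{Coifman*}) and the way $u^{1/p}$ is distributed between the $\E$-norm and the $\F$-norm so that, after H\"older and the boundedness of $M_\E$ on $L^{p'}$, the power of $u$ that reappears is exactly $u$ to the first power inside $\D$, and the fractional order that reappears is exactly $\alpha p$ rather than $\alpha$ or some other multiple. The relation $\D(t)=\F(t^{1/p})$ is tailor-made for this, and the identity $\|u^{1/p}\|_{\F,B}^{\,p}=\|u\|_{\D,B}$ should be recorded as a short lemma (it follows immediately from the definition of the Luxemburg norm). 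A secondary technical point is justifying the duality and the use of \eqref{Coifman*} (whose right-hand side must be finite) for the truncated or approximated $f\in L^\infty_c$; this is handled by the usual approximation and the "whenever the left-hand side is finite" caveat inherited from Theorem \ref{Coifman}, so it does not cause real trouble. Everything else — the changes of variables, the constants $c_{A_i,n}$, the generalized H\"older inequality — is routine given the tools assembled in Section 2.
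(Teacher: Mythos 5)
Your overall strategy is the paper's: dualize onto the adjoint, invoke \eqref{Coifman*}, split $M_{\alpha,\phi}$ by the generalized H\"older inequality into an $\E$-factor and an $\F$-factor, use $\E\in B_{p'}$ to bound $M_{\E}$ on unweighted $L^{p'}$, use the identity $\|u^{1/p}\|_{\F,B}^{\,p}=\|u\|_{\D,B}$ (so $M_{\alpha,\F}(u^{1/p})^{p}=M_{\alpha p,\D}u$), and handle the matrices by the change-of-variables estimate of Proposition \ref{obsMA}. The place where your plan is not merely vague but misdirected is your step (2), and that step is the crux. There is no Rubio de Francia construction here, and the exponent is not ``near $1$'': duality of $T:L^p(\nu)\to L^p(u)$, with $\nu=M_{\alpha p,\D}u$ (suitably composed with the $A_i$), forces $q=p'$, and the weight fed into \eqref{Coifman*} must be exactly $w=\nu^{1-p'}$. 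This choice is forced by the cancellation your step (7) silently relies on: after the splitting $M_{\alpha,\phi}g\leq c\,M_{\E}(g\,u^{-1/p})\,M_{\alpha,\F}(u^{1/p})$, the factor $M_{\alpha,\F}(u^{1/p})^{p'}=(M_{\alpha p,\D}u)^{p'/p}$ must meet the weight and produce $\nu^{p'/p}\,\nu^{1-p'}=1$, leaving only the $M_{\E}$-term, which $\E\in B_{p'}$ absorbs. A Rubio de Francia majorant $v\in A_1$ with $v\geq \nu^{1-p'}$ only controls $\nu^{1-p'}$ from above and need not be comparable to it from below, so the cancellation is lost; and a weight built from ``$h$-dependent data'' cannot cancel a quantity built from $u$ alone. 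Also, in your step (6) only the plain $M_{\E}:L^{p'}\to L^{p'}$ is needed; all of the fractional order goes into the $\F$-factor.

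Once the weight is chosen correctly, the single nontrivial input your plan never secures is the $A_{\infty}$ membership of $(M_{\alpha p,\D}u)^{1-p'}$, which is what legitimizes the use of \eqref{Coifman*}; this is a Coifman--Rochberg-type fact for Orlicz fractional maximal operators that the paper quotes from \cite{BLR11}, and it should be stated explicitly in your write-up. With that in hand, your steps (1), (3)--(7) go through essentially as in the paper: the paper phrases the argument as the equivalent dual inequality $\int_{\R}|T^{*}g|^{p'}\nu^{1-p'}\leq c\int_{\R}|g|^{p'}u^{1-p'}$ rather than as a pairing, estimates $M_{\alpha,\phi}g\leq c\,M_{\E}(g\,u^{-1/p})M_{\alpha,\F}(u^{1/p})$ on the right-hand side of \eqref{Coifman*}, uses Proposition \ref{obsMA} to move the compositions with $A_i$ onto $u$, cancels against $\nu^{1-p'}$, and finishes with $\E\in B_{p'}$ and a linear change of variables, exactly the bookkeeping you anticipated but did not pin down.
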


\begin{table}[h!]
\begin{center}
\caption{Examples}
\vspace{-0.3cm}
\begin{tabular}{|c|c|c|}
\hline
$\hspace{1cm}  M_{\alpha, \phi}  \hspace{1cm}     $ &  \hspace{1cm}       Range of $p$'s   \hspace{1cm}     &       \hspace{1cm} $M_{\alpha p,\D}$  \hspace{1cm} \\ \hline
  $M_{\alpha,L\log L^k}$ & $     1<p<\infty       $ & $M_{\alpha,L\log L^{(k+1)p-1+\varepsilon}}$\\
 $M_{\alpha,L^{r'}\log L^{r'(k+1)}}$ & $1<p<r$ &  $M_{\alpha,L^{\left(\frac{r}{p}\right)'}\log L^{\left(\frac{r}{p}\right)'((k+1)+p-1)+\varepsilon}}$\\
\hline
\end{tabular}
\end{center}
\end{table}

Now we give a  endpoint estimate for $T_{0,m,b}^k$ that derives
from Theorems \ref{Coifman} and \ref{fuerte2pesos}.

\begin{teo}\label{debilpesado} Let $T_{0 , m}$ be as in Theorem \ref{Coifman}.
\begin{enumerate}[(a)]
\item If there exists $r>1$ such that $t^r\leq c \phi(t)$ for $t\geq t_0>0$ then (\ref{debil2pesos}) holds for the pairs of weights $\displaystyle
\left(u,\sum_{i=1}^{m}\left(M_{\phi}u(A_i\cdot)\right)\right)$.
\item  If there exist  Young functions $\E, \F$ such that $\E \in B_{p'}$ and $\E^{-1}(t)\F^{-1}(t)\leq \phi^{-1}(t)$. Set
$\D(t)=\F(t^{1/p})$, then (\ref{debil2pesos}) holds for the pairs of
weights $\displaystyle
\left(u,\sum_{i=1}^{m}\left(M_{\D}u(A_i\cdot)\right)\right).$
\end{enumerate}
\end{teo}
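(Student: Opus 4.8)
The plan is to derive the weak-type $(u,Su)$ endpoint estimate \eqref{debil2pesos} for $T_{0,m,b}^k$ by combining the strong Coifman estimate of Theorem \ref{Coifman} (in the singular case $\alpha=0$) with the two-weight strong estimate of Theorem \ref{fuerte2pesos}, following the standard Calder\'on--Zygmund decomposition argument used for commutators of singular integrals (as in the work of P\'erez and collaborators). First I would fix $\lambda>0$ and, by homogeneity, normalize so that $\lambda=1$; I would also normalize $\|b\|_{BMO}=1$, since the constant $\|b\|_{BMO}^{kp}$ from the Coifman estimate only affects the final constant. The key point is that for $\alpha=0$ the relevant Young function governing the endpoint is $\varphi_k(t)=t\log(e+t)^k$, which is exactly the function appearing on the right-hand side of \eqref{debil2pesos}, and $M_{\varphi_k}\approx M^{k+1}$.

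The main steps, in order, are the following. \emph{Step 1: local reduction.} Given $f$ with $\int \varphi_k(|f|)\,Su < \infty$, I would perform a Calder\'on--Zygmund decomposition of $f$ at height $1$ (with respect to Lebesgue measure, or more precisely with respect to the $L\log L^k$-Orlicz maximal operator, using the Calder\'on--Zygmund--type covering adapted to $\varphi_k$), writing $f=g+b^{*}=g+\sum_j b_j^{*}$ with $b_j^{*}$ supported on dilated cubes $Q_j$ of a Whitney family, $\int_{Q_j} b_j^{*}=0$, $\|g\|_\infty\lesssim 1$ and $\sum_j |Q_j| \lesssim \int \varphi_k(|f|)$. \emph{Step 2: good part.} For the good function $g$, I would use Chebyshev at exponent $p$ for a suitable $p>1$ (chosen so that case (a) or (b) of the theorem applies: in case (a), $p$ with $t^r\le c\phi(t)$; in case (b), the $p$ for which $\E\in B_{p'}$) together with the two-weight strong estimate \eqref{fuerteuMu} from Theorem \ref{fuerte2pesos} applied to $T=T_{0,m,b}^k$ — noting that by Theorem \ref{Coifman} the adjoint $(T_{0,m,b}^k)^*$, which is again a commutator of the same type with kernel $\tilde k_i$, satisfies \eqref{Coifman*} with the Young function $\phi$. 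This bounds $u\{|T g|>1/2\}$ by $\int |g|^p \sum_i M_{0 p,\D}u(A_i\cdot) \lesssim \int |g|\sum_i M_\D u(A_i\cdot)\lesssim \int \varphi_k(|f|)Su$, using $\|g\|_\infty\lesssim 1$ and $|g|\le |f|$ where $f$ is large, $|g|\lesssim$ an average otherwise. \emph{Step 3: bad part.} Away from the enlarged exceptional set $\Omega^*=\bigcup_j A_i^{-1}(\text{a fixed dilate of } Q_j)$, whose $u$-measure is controlled by $\sum_j u(A_i^{-1}(cQ_j))$ and hence, after invoking the matrix condition and the fact that $M_\phi u$ (or $M_\D u$) dominates averages of $u$ over such cubes, by $\sum_j |Q_j|\inf_{Q_j}(\text{something}\lesssim Su)\lesssim \int\varphi_k(|f|)Su$, I would estimate $|T_{0,m,b}^k b^{*}(x)|$ pointwise. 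Here one expands the commutator, $(b(x)-b(y))^k=\sum_{l=0}^k \binom{k}{l}(b(x)-b_{Q_j})^{l}(b_{Q_j}-b(y))^{k-l}$, and uses the cancellation $\int_{Q_j} b_j^{*}=0$ against the $L^{\alpha_i,\Psi_i,k}$-H\"ormander regularity of the kernel $K$ (this is precisely where the order-$k$ H\"ormander condition $H_{n-\alpha_i,\Psi_i,k}$, with the extra $m^k$ factor, is used — it absorbs the logarithmic growth produced by the $(b_{Q_j}-b(y))^{k-l}$ terms and the John--Nirenberg estimate on annuli $2^s Q_j$). Summing over $j$ and $s$ against $u(x)\,dx$ outside $\Omega^*$ gives the bound $\lesssim \sum_j |Q_j|\,\|b\|_{BMO}^k \inf_{Q_j}\sum_i M_\phi u(A_i\cdot)\lesssim \int\varphi_k(|f|)Su$.

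The step I expect to be the main obstacle is \emph{Step 3}, the bad-part estimate, and specifically the bookkeeping of the logarithmic factors: after expanding the $k$-th order commutator one must control, for each annulus $2^sQ_j\setminus 2^{s-1}Q_j$, a sum of terms each carrying a factor $|b(y)-b_{Q_j}|^{k-l}$ whose average over $2^sQ_j$ grows like $s^{k-l}\|b\|_{BMO}^{k-l}$ by John--Nirenberg; pairing this against the generalized H\"ormander condition of order $k$ requires matching $s^{k-l}$ against the $m^k$ (i.e. $s^k$) weight built into $H_{n-\alpha_i,\Psi_i,k}$, together with a generalized H\"older inequality in the Orlicz norms $\Psi_i$, $\overline{\varphi_k}$, and the complementary $\phi$ — exactly the hypothesis $\Psi_1^{-1}\cdots\Psi_m^{-1}\overline{\varphi_k}^{-1}\phi^{-1}\lesssim t$. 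Controlling the $u$-measure of the enlarged exceptional set $\Omega^*$ in terms of $\int\varphi_k(|f|)Su$ also needs care, since it is here that the definition $\D(t)=\F(t^{1/p})$ in part (b), respectively the hypothesis $t^r\le c\phi(t)$ in part (a), enters to guarantee that $Su$ — namely $\sum_i M_{\D}u(A_i\cdot)$ or $\sum_i M_\phi u(A_i\cdot)$ — indeed dominates the relevant Orlicz averages of $u$ over the Whitney cubes. Once these two estimates are in place, adding the contributions of $g$ and $b^{*}$ and undoing the normalizations yields \eqref{debil2pesos}.
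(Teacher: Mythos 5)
Your overall architecture is the same as the paper's: a Calder\'on--Zygmund decomposition of $f$ at height $\lambda$, an exceptional set made of dilated cubes associated to the matrices $A_i$, the good part handled through the strong estimates (Theorem \ref{fuerte2pesos}, resp.\ the Coifman estimate), and the bad part handled through the vanishing mean of the $h_j$ against the kernel regularity, with the exceptional set and the bad part finally controlled by $\inf_{Q_j}\sum_i M_{\phi}u(A_i\cdot)$ (resp.\ $M_{\D}$) via the stopping condition $\lambda<|Q_j|^{-1}\int_{Q_j}f$. Two corrections to details of your plan: the exceptional set must consist of dilates of cubes \emph{centered at $A_ic_j$} (images, not preimages $A_i^{-1}(cQ_j)$ as you wrote), since the singularities of $K(\cdot,y)$ for $y\in Q_j$ sit near $A_iQ_j$, and only then do the averages of $u$ come out as $Mu(A_ix)$ for $x\in Q_j$, matching $Su=\sum_iM_{\phi}u(A_i\cdot)$; and in case (a) you cannot invoke Theorem \ref{fuerte2pesos}, whose hypotheses are exactly those of case (b) --- the paper instead inserts the $A_1$ weight $M_r\tilde u$, applies the Coifman estimate of Theorem \ref{Coifman} with that weight, and then uses Lemma \ref{lemaM} ($\Phi\in B_p$ and $M(M_r)\approx M_r$) together with $t^r\lesssim\phi(t)$ to return to $M_{\phi}\tilde u(A_i\cdot)$.

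The genuine gap is in your Step 3, i.e.\ precisely the step you flagged. After the binomial expansion $(b(x)-b(y))^k=\sum_{l}\binom{k}{l}(b(x)-b_{Q_j})^{l}(b_{Q_j}-b(y))^{k-l}$, only the term $l=k$ retains the cancellation of $h_j$, and for it your scheme (H\"older in $x$ on annuli with $\exp L^{1/k}$, the factor $s^{k}$ absorbed by $H_{n-\alpha_i,\Psi_i,k}$) is sound. For $l<k$ the inner function $(b_{Q_j}-b)^{k-l}h_j$ has no vanishing mean, so the kernel-difference trick is unavailable, and since here $\alpha=0$ the size conditions alone give annular contributions of order $(2^sR)^{-n}\cdot(2^sR)^{n}$, whose sum over $s$ diverges: the order-$k$ H\"ormander condition plus John--Nirenberg cannot rescue these terms. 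The standard repair (P\'erez--Pradolini) is an induction on $k$: apply the already-established weak $(u,Su)$ endpoint for the lower-order commutators $T^{l}_{0,m,b}$ to $\sum_j(b-b_{Q_j})^{k-l}h_j$ and then bound $\int\varphi_l\bigl(\lambda^{-1}\sum_j|b-b_{Q_j}|^{k-l}|h_j|\bigr)Su$ by $\int\varphi_k(|f|/\lambda)Su$ using John--Nirenberg/Orlicz estimates on the cubes $Q_j$; your plan contains no such inductive ingredient. For what it is worth, the paper's own written proof sidesteps this entirely: it is carried out only for $m=2$ and $k=0$ (the operator $T_{0,2}$ itself, with the linear right-hand side and the order-zero H\"ormander condition in \eqref{essinf}), with the commutator case left as ``analogous'', so your attempt to treat general $k$ is more ambitious than the text but, as formulated, incomplete at exactly this point.
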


\begin{obs}
 Observe that the pairs of weights given in $(a)$ are better
than the one in $(b)$. (see Remark 3.3 in \cite{LMPR09})
\end{obs}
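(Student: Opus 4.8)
The plan is to show that the majorant weight appearing on the right–hand side of the endpoint estimate in $(a)$ is pointwise controlled by the one in $(b)$; since a weak–type inequality of the form \eqref{debil2pesos} with a smaller majorant is a stronger statement, this is precisely the asserted comparison. Concretely, it suffices to prove that there is a constant $c>0$, depending only on $p$ and the Young functions involved, with
\begin{equation*}
M_{\phi}u(x)\le c\,M_{\D}u(x)\qquad\text{for a.e. }x\in\R
\end{equation*}
and every weight $u$; evaluating at $A_ix$ and summing over $i$ then gives $\sum_{i}M_{\phi}u(A_ix)\le c\sum_{i}M_{\D}u(A_ix)$, i.e. the pair in $(a)$ is majorized by the pair in $(b)$. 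The pointwise bound above follows once we know that $\D$ \emph{dominates} $\phi$ as a Young function, that is $\phi(t)\le \D(ct)$ for $t\ge t_0$, equivalently
\begin{equation*}
\D^{-1}(t)\le c\,\phi^{-1}(t),\qquad t\ge t_0 .
\end{equation*}
Indeed, this domination yields $\|u\|_{\phi,B}\le c\|u\|_{\D,B}$ for every ball $B$ (a standard property of Luxemburg averages, see \cite{O65,RaoRen91}), and taking the supremum over balls containing $x$ produces $M_{\phi}u\le c\,M_{\D}u$.

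Thus the crux will be the inverse inequality $\D^{-1}(t)\le c\,\phi^{-1}(t)$ for large $t$. Since $\D(t)=\F(t^{1/p})$ we have $\D^{-1}(t)=\bigl(\F^{-1}(t)\bigr)^{p}$, and from the factorization hypothesis $\E^{-1}(t)\F^{-1}(t)\le \tilde c\,\phi^{-1}(t)$ of part $(b)$ we obtain $\F^{-1}(t)\le \tilde c\,\phi^{-1}(t)/\E^{-1}(t)$. The key step is to extract a lower growth bound on $\E^{-1}$ from $\E\in B_{p'}$: starting from $\int_{1}^{\infty}\E(t)\,t^{-p'}\,\frac{dt}{t}<\infty$ and using that $\E$ is non–decreasing, comparing $\E(t/2)$ with the tail integral over $[t/2,t]$ gives $\E(t)=o(t^{p'})$ as $t\to\infty$; hence for each $\varepsilon>0$ one has $\E(t)\le \varepsilon t^{p'}$ for $t$ large, which is equivalent to $\E^{-1}(s)\ge \varepsilon^{-1/p'}s^{1/p'}$ for $s$ large. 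In particular $\E^{-1}(t)\ge t^{1/p'}$ for $t\ge t_1$, so
\begin{equation*}
\D^{-1}(t)=\bigl(\F^{-1}(t)\bigr)^{p}\le \tilde c^{\,p}\,\frac{\phi^{-1}(t)^{p}}{t^{p/p'}}=\tilde c^{\,p}\left(\frac{\phi^{-1}(t)}{t}\right)^{p-1}\phi^{-1}(t),\qquad t\ge t_1 .
\end{equation*}
Finally, since $\phi$ is a Young function the map $t\mapsto\phi(t)/t$ is non–decreasing, whence $\phi(t)\ge c_{0}t$ for $t\ge 1$ and therefore $\phi^{-1}(t)/t\le c_{0}^{-1}$ for $t$ large. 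Inserting this in the last display yields $\D^{-1}(t)\le \tilde c^{\,p}c_{0}^{-(p-1)}\phi^{-1}(t)$ for $t$ large, which is the required domination.

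I would take for granted the routine facts invoked above: the equivalence between Young–function domination and comparison of Luxemburg averages, the change of variables $x\mapsto A_ix$ inside $M_{\phi}$ and $M_{\D}$ (already recorded in Proposition \ref{obsMA}), and the elementary monotonicity properties of Young functions and their inverses. The only genuinely delicate point will be the lower bound $\E^{-1}(t)\gtrsim t^{1/p'}$ coming from the $B_{p'}$ condition — this is exactly the mechanism behind Remark 3.3 of \cite{LMPR09}: the auxiliary bump $\E$ that one is forced to peel off $\phi$ in order to run the $B_{p'}$ argument of part $(b)$ is precisely what makes the resulting majorant $M_{\D}$ strictly larger than the majorant $M_{\phi}$ of part $(a)$.
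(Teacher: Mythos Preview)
Your argument is correct. The paper itself does not prove this remark at all: it simply records the observation and points to Remark~3.3 in \cite{LMPR09}. You have instead written out the full justification, showing directly that the hypothesis $\E\in B_{p'}$ together with $\E^{-1}\F^{-1}\le \tilde c\,\phi^{-1}$ forces $\D^{-1}(t)\lesssim \phi^{-1}(t)$ for large $t$, whence $M_{\phi}u\lesssim M_{\D}u$ and the pair in $(a)$ is indeed the sharper one. The key step---extracting $\E^{-1}(t)\gtrsim t^{1/p'}$ from the finiteness of $\int_1^\infty \E(t)\,t^{-p'}\,\frac{dt}{t}$ via the monotonicity of $\E$---is exactly the mechanism behind the cited remark, so your write-up is a faithful expansion of what the paper leaves to the reference. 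Nothing needs to be changed.
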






\section{Proof of Sharp Theorem and  Coifman inequality}

Recall some  classical results concern to   functions in
$BMO$, we do not give  the proof.
\begin{lema}\label{BMO}
Let $b \in  BMO$.
\begin{enumerate}
\item For any measurable subsets $A\subset B \subset \R$ such that $|A|,|B|>0$,  we have
$$|b_A - b_B|\leq \frac{|B|}{|A|}\|b\|_{BMO}.$$
In particular,  if $\tilde{B}$ is a measurable set and $\tilde{B}_i=A_i^{-1}\tilde{B}$, $1\leq i \leq m$ then
$$|b_{\tilde{B}}-b_{(\cup_{l=1}^m \tilde{B}_l)\cup \tilde{B}}|\leq (1 + \sum_{l=1}^m |det(A_l^{-1})|) \|b\|_{BMO}$$
\item Let $B=B(c_B,R)$ be a ball, centered at $c_B$ with radius $R$, and $B^j=B(c_B,2^jR)$. Then,
$$|b_B - b_{B^j}|\leq c j \|b\|_{BMO}.$$
\end{enumerate}
\end{lema}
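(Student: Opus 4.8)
The plan is to deduce both items from the single elementary estimate
\[
\frac1{|B|}\int_B|b(y)-b_B|\,dy\le\|b\|_{BMO}\qquad\text{for every ball }B,
\]
which follows at once from $\|b\|_{BMO}=\|M^{\#}b\|_\infty$ upon evaluating $M^{\#}b$ at an arbitrary point of $B$.

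For item (1), given measurable $A\subset B$ with $B$ a ball, I would begin from the identity $b_A-b_B=\frac1{|A|}\int_A (b(y)-b_B)\,dy$, pass to absolute values inside the integral, enlarge the domain of integration from $A$ to $B$ (the integrand being nonnegative), and then factor out $|B|/|A|$ so as to recover the average over $B$; the displayed estimate then yields $|b_A-b_B|\le\frac{|B|}{|A|}\|b\|_{BMO}$. For the ``in particular'' statement I would set $E=\bigl(\bigcup_{l=1}^m\tilde B_l\bigr)\cup\tilde B$, with $\tilde B$ a ball and $\tilde B_l=A_l^{-1}\tilde B$; since $|\tilde B_l|=|\det A_l^{-1}|\,|\tilde B|$, this gives $\tilde B\subset E$ and $|E|\le\bigl(1+\sum_{l=1}^m|\det A_l^{-1}|\bigr)|\tilde B|$. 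Using $\int_{\tilde B}(b-b_{\tilde B})=0$ one gets $|b_{\tilde B}-b_E|=\frac1{|E|}\bigl|\int_{E\setminus\tilde B}(b-b_{\tilde B})\bigr|\le\frac1{|E|}\int_E|b-b_{\tilde B}|\,dy$, and I would conclude by bounding the average of $|b-b_{\tilde B}|$ over $E$ by a multiple of $\|b\|_{BMO}$ (comparing $E$ with a ball of comparable measure and invoking the first part) together with the volume ratio $|E|/|\tilde B|\le 1+\sum_l|\det A_l^{-1}|$.

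For item (2), I would apply item (1) to each pair of concentric balls $B^{i}\subset B^{i+1}$, $0\le i\le j-1$ (with $B^0=B$), for which $|B^{i+1}|/|B^i|=2^n$, to obtain $|b_{B^i}-b_{B^{i+1}}|\le 2^n\|b\|_{BMO}$, and then sum the telescoping chain $|b_B-b_{B^j}|\le\sum_{i=0}^{j-1}|b_{B^i}-b_{B^{i+1}}|\le 2^n j\,\|b\|_{BMO}$, giving the claim with $c=2^n$. I expect the only genuinely delicate point to be matching the precise constant $1+\sum_l|\det A_l^{-1}|$ in the specialization of item (1): the set $E$ is a finite union of ellipsoids rather than a ball, so replacing ``average over $E$'' by ``average over a ball'' a priori costs a comparability constant depending on the eccentricities of the $A_l^{-1}$; modulo reading the first inequality up to such a constant (or carrying out a more careful decomposition of $E$) the remaining steps are entirely routine.
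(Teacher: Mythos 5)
The paper itself gives no proof of this lemma (it is quoted as a classical fact), so your argument has to stand on its own. The parts of your proposal that can be checked are correct: reading the first inequality of item (1) with the larger set a ball, the identity $b_A-b_B=\frac1{|A|}\int_A(b-b_B)$ plus enlargement of the domain of integration gives the factor $|B|/|A|$, and the telescoping over the concentric balls $B^{i}\subset B^{i+1}$ gives item (2) with $c=2^n$; this is the standard argument.

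The genuine gap is in the ``in particular'' estimate, and it is not the constant-chasing you expect to be fixable. The obstruction is not the eccentricity of the ellipsoids $\tilde B_l=A_l^{-1}\tilde B$ but their position: if $\tilde B=B(c_B,2R)$, then $\tilde B_l$ is centered at $A_l^{-1}c_B$, at distance $|(I-A_l^{-1})c_B|$ from $c_B$, which can be arbitrarily large compared with $R$. Hence $E=\tilde B\cup\bigcup_l\tilde B_l$ need not be contained in any ball of measure comparable to $|E|$, and the quantity your argument needs, $\frac1{|E|}\int_E|b-b_{\tilde B}|$, is not bounded by $C(n,A_1,\dots,A_m)\|b\|_{BMO}$. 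In fact the claimed inequality is false in this generality: in $\mathbb{R}$ take $m=1$, $A_1^{-1}=2$, $\tilde B=(c-1,c+1)$ and $b(x)=\log|x-c|$, so that $\|b\|_{BMO}$ is an absolute constant and $b_{\tilde B}=-1$, while for $c$ large the sets $\tilde B$ and $\tilde B_1=(2c-2,2c+2)$ are disjoint and $b_{\tilde B\cup\tilde B_1}\ge \frac23\log(c-2)-\frac13$; thus $|b_{\tilde B}-b_{\tilde B\cup\tilde B_1}|\to\infty$ as $c\to\infty$, although $(1+|\det A_1^{-1}|)\|b\|_{BMO}$ is fixed. The same example, with $A=\tilde B$ and $B=\tilde B\cup\tilde B_1$, shows that the first inequality of item (1) also fails for arbitrary measurable $B$, so your restriction to balls is the right reading, but then the ``in particular'' claim is no longer a formal consequence of it. The estimate does hold (with a constant depending also on the geometry) under an extra hypothesis guaranteeing that $E$ lies in a ball of measure comparable to $|\tilde B|$, for instance when every $\tilde B_l$ meets $\tilde B$ or when $c_B=0$, and in that situation your outline goes through; as stated, however, no decomposition of $E$ can rescue the bound, so this step cannot be completed.
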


In the proof of Theorem \ref{SharpHorm}, we follow the idea of the
proof of Theorem 2.2 in \cite{RU14}.
\begin{proof}[Proof of Theorem \ref{SharpHorm}]
We just  consider the case $m=2$ and $k=1$, i. e.
$T_{\alpha,2,b}^1=[b,T_{\alpha,2}]$, and we will just write
$[b,T_{\alpha}]$. The general case is proved in an analogous way.

Let $f$ be a bounded function with compact support, $b \in BMO$ and
$0<\delta<\epsilon \leq 1$. Let $x\in \R$ and let $B=B(c_B,R)$ be a
ball that contains $x$, centered at $c_B$ with radius R.
We write $\tilde{B}=B(c_B,2R)$ and for $1\leq i \leq 2$, set $\tilde{B}_i=A_i^{-1}\tilde{B}$. Let $\displaystyle f_1=f\chi_{\cup_{i=1}^2 \tilde{B}_i}$ and $f_2=f-f_1$.\\
Suppose that $a:=T_{\alpha}((b-b_{\tilde{B} \cup \tilde{B}_1 \cup \tilde{B}_2})f_2)(c_B)<\infty$.

We can write
\begin{align*}
[b,T_{\alpha}f](x)
=(b(x)-b_{\tilde{B} \cup \tilde{B}_1 \cup \tilde{B}_2})T_{\alpha}f(x)-T_{\alpha}((b-b_{\tilde{B} \cup \tilde{B}_1 \cup \tilde{B}_2})f)(x).
\end{align*}
Now, we have
\begin{align}\label{delta}
&\left(\frac1{|B|}\int_B |[b,T_{\alpha}f](y)-a|^{\delta}dy\right)^{1/\delta} \nonumber\\
& \qquad\leq \left(\frac1{|B|}\int_B |(b(y)-b_{\tilde{B} \cup \tilde{B}_1 \cup \tilde{B}_2})T_{\alpha}f(y)|^{\delta}dy\right)^{1/\delta} \nonumber\\
& \qquad+\left(\frac1{|B|}\int_B |T_{\alpha}((b-b_{\tilde{B} \cup \tilde{B}_1 \cup \tilde{B}_2})f_1)(y)|^{\delta}dy\right)^{1/\delta} \nonumber\\
& \qquad+\left(\frac1{|B|}\int_B |T_{\alpha}((b-b_{\tilde{B} \cup \tilde{B}_1 \cup \tilde{B}_2})f_2)(y)-T_{\alpha}((b-b_{\tilde{B} \cup \tilde{B}_1 \cup \tilde{B}_2})f_2)(c_B)|^{\delta}dy\right)^{1/\delta} \nonumber\\
&\qquad=I+II+III.
\end{align}

To estimate $I$, let $q=\epsilon / \delta >1$, by H\"older's inequality and Lemma \ref{BMO},
\begin{align*}
I&\leq \left(\frac1{|B|}\int_B |(b(y)-b_{\tilde{B}})T_{\alpha}f(y)|^{\delta}dy\right)^{1/\delta}+|b_{\tilde{B}}-b_{\tilde{B} \cup \tilde{B}_1 \cup \tilde{B}_2}|\left(\frac1{|B|}\int_B |T_{\alpha}f(y)|^{\delta}dy\right)^{1/\delta}\\
&\leq \left(\frac1{|B|}\int_B |(b(y)-b_{\tilde{B}})|^{q'\delta}dy\right)^{1/q'\delta}\left(\frac1{|B|}\int_B |T_{\alpha}f(y)|^{q\delta}dy\right)^{1/q\delta}+C\|b\|_{BMO}M_{\delta}(T_{\alpha}f)(x)\\
&\leq C\|b\|_{BMO}M_{\epsilon}(T_{\alpha}f)(x)+C\|b\|_{BMO}M_{\delta}(T_{\alpha}f)(x)\\
&\leq C\|b\|_{BMO}M_{\epsilon}(T_{\alpha}f)(x).
\end{align*}

For $II$, by Jensen inequality
\begin{align}\label{local}
II&\leq \frac1{|B|}\int_B |T_{\alpha}((b-b_{\tilde{B} \cup \tilde{B}_1 \cup \tilde{B}_2})f_1)(y)|dy \nonumber \\
&\leq \frac1{|B|}\int_B \int_{\tilde{B}_1 \cup \tilde{B}_2} |K(y,z)||b(z)-b_{\tilde{B} \cup \tilde{B}_1 \cup \tilde{B}_2}||f_1(z)|dzdy \nonumber \\
&\leq \sum_{i=1}^2 \frac1{|B|}\int_{\tilde{B_i}}|b(z)-b_{\tilde{B}
\cup \tilde{B}_1 \cup \tilde{B}_2}||f_1(z)|\int_B  |K(y,z)|dy dz.
\end{align}
 We estimate the first summand, that is $z\in \tilde{B_1}$,
the case $z\in \tilde{B_2}$ is analogous. Observe that
\begin{equation}
\int_B  |K(y,z)|dy\leq \int_{\{y\in B: |y-A_1z|\leq|y-A_2z|\}}
|K(y,z)|dy + \int_{\{y\in B: |y-A_2z|\leq|y-A_1z|\}} |K(y,z)|dy.
\end{equation}

For 
 $j\in \N$, let us consider the set
$$C_j^1:= \{y\in B : |y-A_1z|\leq|y-A_2z|, |y-A_1z|\sim 2^{-j-1}R \}.$$
Observe that if $y\in B$ and $z\in \tilde{B_1}$ then $|y-A_1z|\leq 3R<4R$. \\
Thus,
\begin{align*}
&\int_{\{y\in B: |y-A_1z|\leq|y-A_2z|\}} |K(y,z)|dy \leq \sum_{j=-2}^{\infty} \int_{C_j^1}|K(y,z)|dy \\
&\quad \leq \sum_{j=-2}^{\infty} \frac{|A_1^{-1}B(c_B, 2^{-j}R)|}{|A_1^{-1}B(c_B, 2^{-j}R)|}\int_{A_1^{-1}B(c_B, 2^{-j}R)}|K(y,z)|\chi_{C_j^1}dy\\
&\quad \leq C \sum_{j=-2}^{\infty} |A_1^{-1}B(c_B,
2^{-j}R)|\|k_1(\cdot-A_1z)\|_{\Psi_1,|y-A_1z|\sim
2^{-j-1}R}\|k_2(\cdot-A_2z)\|_{\Psi_2,|y-A_1z|\sim 2^{-j-1}R}.
\end{align*}

Observe that if  $y\in C_j^1$ then $|y-A_2z|\geq|y-A_1z|>2^{j-1}R$
and since $k_2\in S_{n-\alpha_2,\Psi_2}$ we get
\begin{align}\label{tamK2}
\|k_2(\cdot-A_2z)\|_{\Psi_2,|y-A_1z|\sim 2^{-j-1}R}&\leq \sum_{k\geq 0} \|k_2(\cdot-A_2z)\|_{\Psi_2,|y-A_2z|\sim 2^{-j+k-1}R} \nonumber\\
&\leq \sum_{k\geq 0}  \|k_2(\cdot)\|_{\Psi_2,|y|\sim 2^{-j+k-1}R}\nonumber \\
&\leq \sum_{k\geq 0} (2^{-j+k}R)^{-\alpha_2}.
\end{align}

 As $k_1\in S_{n-\alpha_1,\Psi_1}$ and using inequality (\ref{tamK2})
 we get

\begin{align*}
\int_{\{y\in B: |y-A_1z|\leq|y-A_2z|\}} |K(y,z)|dy
 \leq C  \sum_{k\geq 0} (2^{-\alpha_2})^{k} \sum_{j=-2}^{\infty}
 (2^{-j}R)^{n-\alpha_1-\alpha_2}=CR^{\alpha}.
\end{align*}
In an analogous way, we get
\begin{equation}\label{eq: Klocal}
 \int_{\{y\in B: |y-A_2z|\leq|y-A_1z|\}} |K(y,z)|dy \leq CR^{\alpha}.
\end{equation}
Then, by (\ref{local}) and (\ref{eq: Klocal}), we obtain
\begin{align*}
II &\leq CR^{\alpha} \sum_{i=1}^2 \frac1{|B|}\int_{\tilde{B_i}}|b(z)-b_{\tilde{B} \cup \tilde{B}_1 \cup \tilde{B}_2}||f(z)|dz\\
&\leq CR^{\alpha} \sum_{i=1}^2 \frac1{|\tilde{B}_i|}\int_{\tilde{B}_i}(|b(z)-b_{\tilde{B}_i}|+|b_{\tilde{B}_i}-b_{\tilde{B} \cup \tilde{B}_1 \cup \tilde{B}_2}|)|f(z)|dz\\
&\leq C \sum_{i=1}^2  R^{\alpha} \left[\|b-b_{\tilde{B}_i}\|_{\exp L,\tilde{B}_i}\|f\|_{\phi,\tilde{B}_i}+\|b\|_{BMO}M_{\alpha}f(A_i^{-1}x)\right]\\
&\leq C \|b\|_{BMO} \sum_{i=1}^2 M_{\alpha,\phi}f(A_i^{-1}x).
\end{align*}

For $III$, by Jensen inequality we get
\begin{align*}
III&\leq \frac1{|B|}\int_B |T_{\alpha,2}((b-b_{\tilde{B} \cup \tilde{B}_1 \cup \tilde{B}_2})f_2)(y)-T_{\alpha,2}((b-b_{\tilde{B} \cup \tilde{B}_1 \cup \tilde{B}_2})f_2)(c_B)|dy\\
&\leq \frac1{|B|}\int_B \int_{(\tilde{B}_1 \cup \tilde{B}_2)^c}|K(y,z)-K(c_B,z)||b(z)-b_{\tilde{B} \cup \tilde{B}_1 \cup \tilde{B}_2}||f_2(z)|dzdy\\
&\leq \frac1{|B|}\int_B \int_{Z^l}|K(y,z)-K(c_B,z)||b(z)-b_{\tilde{B} \cup \tilde{B}_1 \cup \tilde{B}_2}||f_2(z)|dzdy,\\
\end{align*}
where
$$Z^l=(\tilde{B}_1 \cup \tilde{B}_2)^c \cap \{z: |c_B-A_lz|\leq |c_B-A_rz|, r\not = l, 1\leq r \leq 2\}.$$
Let us estimate $|K(y,z)-K(c_B,z)|$ for $y \in B$ and $z\in Z^l$,
\begin{align}\label{ineqK}
|K(y,z)-K(c_B,z)|\leq &|k_1(y-A_1z)-k_1(c_B-A_1z)||k_2(y-A_2z)| \nonumber \\
&+ |k_1(c_B-A_1z)||k_2(y-A_2z)-k_2(c_B-A_2z)|.
\end{align}

For simplicity we estimate the first summand of (\ref{ineqK}), the
other one follows in an analogous way. For $j\in \N$, let
$$D_j^l=\{z\in Z^l: |c_B-A_lz|\sim 2^{j+1}R\}.$$
Observe that $D_j^l\subset \{z:|c_B-A_lz|\sim 2^{j+1}R\}\subset
A_l^{-1}B(c_B,2^{j+2}R)=:\tilde{B}_{l,j}$. Using generalized
H\"older's inequality
\begin{align*}
\int_{Z^l}&|k_1(y-A_1z)-k_1(c_B-A_1z)||k_2(y-A_2z)||b(z)-b_{\tilde{B} \cup \tilde{B}_1 \cup \tilde{B}_2}||f(z)|dz \\
&\leq \sum_{j=1}^{\infty}\int_{D_j^l}|k_1(y-A_1z)-k_1(c_B-A_1z)||k_2(y-A_2z)||b(z)-b_{\tilde{B} \cup \tilde{B}_1 \cup \tilde{B}_2}||f(z)|dz\\
&\leq \sum_{j=1}^{\infty}\frac{|\tilde{B}_{l,j}|}{|\tilde{B}_{l,j}|}\int_{\tilde{B}_{l,j}}\Biggl[\chi_{\{z:|c_B-A_lz|\sim 2^{j+1}R\}}\chi_{D_j^l}|k_1(y-A_1z)-k_1(c_B-A_1z)||k_2(y-A_2z)| \\
&\hspace*{7.5cm} \left(|b(z)-b_{\tilde{B}_l^j}|+|b_{\tilde{B}_{l,j}}-b_{\tilde{B} \cup \tilde{B}_1 \cup \tilde{B}_2}|\right)|f(z)| \Biggr]dz\\
&\leq \sum_{j=1}^{\infty}|\tilde{B}_{l,j}|\|(k_1(y-A_1\cdot)-k_1(c_B-A_1\cdot))\chi_{D_j^l}\|_{\Psi_1,|c_B-A_lz|\sim 2^{j+1}R}\\
&\qquad \quad \|k_2(y-A_2\cdot)\chi_{Z^l}\|_{\Psi_2,|c_B-A_lz|\sim 2^{j+1}R} \Bigr( \|b-b_{\tilde{B}_l^j}\|_{\exp L,\tilde{B}_{l,j}} + c j \|b\|_{BMO} \Bigr)\|f_2\|_{\varphi,\tilde{B}_{l,j}}\\
&\leq c \|b\|_{BMO} \sum_{j=1}^{\infty}|\tilde{B}_{l,j}| j \|(k_1(y-A_1\cdot)-k_1(c_B-A_1\cdot))\chi_{D_j^l}\|_{\Psi_1,|c_B-A_lz|\sim 2^{j+1}R}\\
&\hspace*{7cm} \|k_2(y-A_2\cdot)\chi_{D_j^l}\|_{\Psi_2,|c_B-A_lz|\sim 2^{j+1}R}\|f_2\|_{\varphi,\tilde{B}_{l,j}}.
\end{align*}

Observe that $|c_B - A_lz|/2\leq |y-A_lz|<2|c_B - A_lz|$ and  if $|c_B - A_lz|\sim 2^{j+1}R$ then $2^jR\leq|y - A_lz|\leq 2^{j+2}R$. Thus, we have
\begin{align*}
\|k_l(y-A_l\cdot)\chi_{D_j^l}\|_{\Psi_l,|c_B-A_lz|\sim 2^{j+1}R}&\leq \|k_l(y-A_l\cdot)\|_{\Psi_l,|y-A_lz|\sim 2^{j}R} + \|k_l(y-A_l\cdot)\|_{\Psi_l,|y-A_lz|\sim 2^{j+1}R}\\
&\leq \|k_l(\cdot)\|_{\Psi_l,|x|\sim 2^jR} + \|k_l(\cdot)\|_{\Psi_l,|x|\sim 2^{j+1}R}\\
&\leq c (2^jR)^{-\alpha_l},
\end{align*}
where the last inequality holds since $k_l\in S_{n-\alpha_l,
\Psi_l}$. Also, by hypothesis
$$\|k_l(c_B-A_l\cdot)\chi_{D_j^l}\|_{\Psi_l,|c_B-A_lz|\sim 2^{j+1}R}\leq c(2^{j+1}R)^{-\alpha_l}.$$

For $r\not= l$, observe that if $z\in D_j^l$ then $|c_B-A_rz|\geq |c_B-A_lz|\geq 2^{j+1}R$, so we descompose $D_j^l=\cup_{k\geq j}(D_j^l)_{k,r}$ where
$$(D_j^l)_{k,r}=\{z\in D_j^l: |c_B-A_rz|\sim 2^{j+1}R\}.$$
Note that $(D_j^l)_{k,r}\subset\{z:|c_B-A_rz|\sim 2^{k+1}R\}$. Then,
as $k_r\in S_{n-\alpha_r, \Psi_r}$,
\begin{align*}
\|k_r(y-A_r\cdot)\chi_{D_j^l}\|_{\Psi_r,|c_B-A_lz|\sim 2^{j+1}R}&\leq \sum_{k\geq j} \|k_r(y-A_r\cdot)\chi_{(D_j^l)_{k,r}}\|_{\Psi_r,|c_B-A_lz|\sim 2^{j+1}R}\\
&\leq \sum_{k\geq j} \|k_r(y-A_r\cdot)\chi_{(D_j^l)_{k,r}}\|_{\Psi_r,|c_B-A_rz|\sim 2^{k+1}R}\\
&\leq \sum_{k\geq j} \|k_r(y-A_r\cdot)\|_{\Psi_r,|c_B-A_rz|\sim 2^{k+1}R}\\
&\leq  \sum_{k\geq j}\|k_r(\cdot)\|_{\Psi_r,|x|\sim 2^kR} + \|k_r(\cdot)\|_{\Psi_r,|x|\sim 2^{k+1}R}\\
&\leq c  \sum_{k\geq j}(2^kR)^{-\alpha_r} = c (2^jR)^{-\alpha_r}.
\end{align*}

Also, using again that $k_r\in S_{n-\alpha_r,
\Psi_r}$,, we get
\begin{align*}
\|k_r(c_B-A_r\cdot)\chi_{D_j^l}\|_{\Psi_r,|c_B-A_lz|\sim 2^{j+1}R}&\leq \sum_{k\geq j} \|k_r(c_B-A_r\cdot)\chi_{(D_j^l)_{k,r}}\|_{\Psi_r,|c_B-A_rz|\sim 2^{k+1}R}\\
&\leq c  \sum_{k\geq j}(2^{k+1}R)^{-\alpha_r} = c (2^jR)^{-\alpha_r}.
\end{align*}

Now for $l=1$,

\begin{align*}
\int_{Z^1}&|k_1(y-A_1z)-k_1(c_B-A_1z)||k_2(y-A_2z)||b(z)-b_{\tilde{B} \cup \tilde{B}_1 \cup \tilde{B}_2}||f_2(z)|dz \\
&\leq c \|b\|_{BMO} \sum_{j=1}^{\infty}(2^jR)^{n-\alpha_2} j \|(k_1(y-A_1\cdot)-k_1(c_B-A_1\cdot))\chi_{D_j^1}\|_{\Psi_1,|c_B-A_1z|\sim 2^{j+1}R}\|f\|_{\varphi,\tilde{B}_1^j}\\
&\leq c \|b\|_{BMO} M_{\alpha,\varphi}f(A_1^{-1}x)\sum_{j=1}^{\infty} (2^jR)^{n-\alpha_2-\alpha} j \|(k_1(y-A_1\cdot)-k_1(c_B-A_1\cdot))\chi_{D_j^1}\|_{\Psi_1,|c_B-A_1z|\sim 2^{j+1}R}\\
&\leq c \|b\|_{BMO} M_{\alpha,\varphi}f(A_1^{-1}x),
\end{align*}
where the last inequality follows since $k_1\in H_{n-\alpha_1,\Psi_1,1}$.

For $l=2$ we observe that
\begin{align*}
\|(k_1(y-A_1\cdot)-k_1(c_B-A_1\cdot))&\chi_{D_j^l}\|_{\Psi_1,|c_B-A_lz|\sim 2^{j+1}R}\\
&\leq \sum_{k\geq j}\|(k_1(y-A_1\cdot)-k_1(c_B-A_1\cdot))\chi_{(D_j^l)_{k,1}}\|_{\Psi_1,|c_B-A_1z|\sim 2^{k+1}R}.
\end{align*}
Then, we obtain
\begin{align*}
\sum_{j=1}^{\infty} (2^jR)^{\alpha_1} &j \|(k_1(y-A_1\cdot)-k_1(c_B-A_1\cdot))\chi_{D_j^1}\|_{\Psi_1,|c_B-A_1z|\sim 2^{j+1}R}\\
&\leq \sum_{j=1}^{\infty} (2^jR)^{\alpha_1} j \sum_{k\geq j}\|(k_1(y-A_1\cdot)-k_1(c_B-A_1\cdot))\chi_{(D_j^l)_{k,1}}\|_{\Psi_1,|c_B-A_1z|\sim 2^{k+1}R}\\
&\leq \sum_{j=1}^{\infty} \frac{(2^jR)^{\alpha_1}}{(2^kR)^{\alpha_1}} \sum_{k\geq j}(2^kR)^{\alpha_1}k\|(k_1(y-A_1\cdot)-k_1(c_B-A_1\cdot))\chi_{(D_j^l)_{k,1}}\|_{\Psi_1,|c_B-A_1z|\sim 2^{k+1}R}\\
&\leq \sum_{k=1}^{\infty} \left(\sum_{j=1}^k(2^{-\alpha_1})^{k-j}\right) (2^kR)^{\alpha_1}k\|(k_1(y-A_1\cdot)-k_1(c_B-A_1\cdot))\chi_{(D_j^l)_{k,1}}\|_{\Psi_1,|c_B-A_1z|\sim 2^{k+1}R}\\
&\leq c \sum_{k=1}^{\infty} (2^kR)^{\alpha_1}k\|(k_1(y-A_1\cdot)-k_1(c_B-A_1\cdot))\chi_{(D_j^l)_{k,1}}\|_{\Psi_1,|c_B-A_1z|\sim 2^{k+1}R}\\
&\leq c,
\end{align*}
where the last inequality follows since $k_1\in H_{n-\alpha_1,\Psi_1,1}$.

So as in the case $l=1$, we obtain
\begin{align*}
\int_{Z^l}&|k_1(y-A_1z)-k_1(c_B-A_1z)||k_2(y-A_2z)||b(z)-b_{\tilde{B} \cup \tilde{B}_1 \cup \tilde{B}_2}||f(z)|dz \\
&\leq c \|b\|_{BMO} M_{\alpha,\varphi}f(A_l^{-1}x).
\end{align*}

Then
\begin{align*}
III\leq c \|b\|_{BMO} \sum_{l=1}^2 M_{\alpha,\varphi}f(A_l^{-1}x).
\end{align*}

For the case $\alpha=0$, we repeat the same argument to the
inequality (\ref{delta}). The terms $I$ and $III$ are analogous to
the ones in  the case $0<\alpha<n$. For $II$, observe that  $T_0$ is
of weak-type $(1,1)$ with respect to the Lebesgue measure (see Lemma 5.3 in \cite{IFR17}), as $0<\delta<1$ and using Kolmogorov's inequality
(see Lemma 5.16 in \cite{Duo}) we get
$$II\leq \frac{C}{|B|}\int_{\R}|f_1(y)|dy=\sum_{i=1}^2  \frac{C}{|B|}\int_{\tilde{B_i}}|f_1(y)|dy\leq C \sum_{i=1}^2 Mf(A_i^{-1}f(x)),$$
and the theorem follows in this case.

\end{proof}

\begin{proof}[Proof of Theorem \ref{Coifman}]
By the extrapolation result Theorem 1.1 in \cite{CUMP04}, estimate (\ref{Coifest}) holds for all $0<p<\infty$ and all $w\in A_{\infty}$ if, and only if, it holds for some $0<p_0<\infty$ and all $w\in A_{\infty}$. Therefore, we will show  that  (\ref{Coifest}) is true for $p_0$, which is taken such that $\frac{n-\alpha}{n}<p_0<\infty$.
By homogeneity, we
assume that $\|b\|_{BMO}=1$. We proceed by induction.

 When $k=0$,
then $T_{\alpha,m,b}^0=T_{\alpha,m}$. As $k_i\in
H_{n-\alpha_i,\Psi_i,0}=H_{n-\alpha_i,\Psi_i}$, Theorem 3.3  in
\cite{IFR17} implies
$$\int_{\R} |T_{\alpha,m}f(x)|^pw(x)dx\leq C \sum_{i=1}^m\int_{\R} |M_{\alpha,\phi}f(x)|^pw(A_ix)dx.$$

Next, we assume that the results holds for all $0\leq j \leq k-1$
and let us see how to derive the case $k$.
 Let $w\in A_{\infty}$, then there
exists $r>1$ such that $w\in A_r$.
Let $0<\delta<1$ such that $1<r<p_0/\delta$, thus $w\in A_{p_0/\delta}$.
Then, by Lemma (5.1) in \cite{IFR17},  we have $\|T_{\alpha,m}f\|_{L^{p_0}(w)}<\infty$ and $\|(T_{\alpha,m}f)^{\delta}\|_{L^{p_0/\delta}(w)}<\infty$.

 %


For prove this, we consider $w,b\in L^{\infty}$,
\begin{align*}
\|T_{\alpha,m,b}^kf\|_{L^{p_0}(w)}&=\|\sum_{j=1}^k c_{k,j} b^{k-j}T_{\alpha,m}(b^jf)\|_{L^{p_0}(w)}
\leq \|w\|_{\infty}\|\sum_{j=1}^k c_{k,j} b^{k-j}T_{\alpha,m}(b^jf)\|_{L^{p_0}}<\infty,
\end{align*}
and $\|(T_{\alpha,m,b}^kf)^{\delta}\|_{L^{p_0/\delta}(w)}<\infty$.
Then, we get

\begin{align*}
\int_{\R}|T_{\alpha,m,b}^kf(x)|^{p_0}w(x)dx&\leq \int_{\R}|M(T_{\alpha,m,b}^kf)^{\delta}(x)|^{p_0/\delta}w(x)dx
\\&\leq \int_{\R}(M_{\delta}^{\sharp}(T_{\alpha,m,b}^kf)(x))^{p_0}w(x)dx
\\&\leq C \sum_{l=0}^{k-1} \|(M_{\epsilon}(T_{\alpha,m,b}^{l}f)\|_{L^{p_0}(w)}^{p_0} + C \sum_{i=1}^{m} \int_{\R}(M_{\alpha,\phi}f(A_i^{-1}x))^{p_0}w(x)dx.
\end{align*}

Since $\delta<q/r<1$, we can take $\epsilon>0$ such that $\delta<\epsilon<{p_0}/r<1$, and so $w\in A_{p_0/\epsilon}$. Hence,
$$\|(M_{\epsilon}(T_{\alpha,m,b}^{l}f)\|_{L^{p_0}(w)}=\|(M(|T_{\alpha,m,b}^{l}f|^{\epsilon})\|_{L^{p_0/\epsilon}(w)}^{1/\epsilon}\leq c\|T_{\alpha,m,b}^{l}f\|_{L^{p_0}(w)}.$$

Thus, the induction hypothesis implies that, for any $0\leq l \leq k-1$,
$$\|(M_{\epsilon}(T_{\alpha,m,b}^{l}f)\|_{L^{p_0}(w)}^{p_0}\leq c\|T_{\alpha,m,b}^{l}f\|_{L^{p_0}(w)}^{p_0}\leq c  \sum_{i=1}^{m} \int_{\R}(M_{\alpha,\phi}f(A_i^{-1}x))^{{p_0}}w(x)dx.$$

Hence, for $w$ and $ b\in L^{\infty}$, \eqref{Coifest} holds, that is
\begin{equation*}\label{eq: Coif}
\int_{\R}|T_{\alpha,m,b}^kf(x)|^{p_0}w(x)dx\leq  C \sum_{i=1}^{m}
\int_{\R}(M_{\alpha,\phi}f(x))^{{p_0}}w(A_ix)dx.
\end{equation*}

For the general case, if $b\in BMO$, for any $N\in \N$ we define $b_N=b\chi_{[-N,N]}+ N\chi_{(N,\infty)} - N\chi_{(-\infty,-N)}$,
 then $\|b_N\|_{\infty}=\|b_N\|_{BMO}\leq 2\|b\|_{BMO}$. For the weight $w\in A_{\infty}$,
 we define $w_N=\min \{w,N\}$, then $w_N\in A_{\infty}$ and $[w_N]_{A_\infty}\leq [w]_{A_{\infty}}$.
 Now, using convergence theorems, for details see \cite{LMRT08}, we conclude  that (\ref{Coifest}) holds for any $b\in BMO$ and $w\in A_{\infty}$.

 Thus, as mentioned, using the extrapolation results obtained in \cite{CUMP04}, (\ref{Coifest}) holds for all $0< p <\infty$,  $b\in BMO$ and $w\in A_\infty$.

If $w$ satisfies (\ref{eq: condW}), we have
\begin{align*}
\int_{\R}|T_{\alpha,m,b}^kf(x)|^{p}w(x)dx &\leq  C \sum_{i=1}^{m} \int_{\R}(M_{\alpha,\phi}f(x))^{{p}}w(A_ix)dx
\\&\leq C \sum_{i=1}^{m} \int_{\R}(M_{\alpha,\phi}f(x))^{{p}}w(x)dx.
\end{align*}
\end{proof}

\section{Proof of one weighted inequalities}

For the proof of Theorem \ref{TeoFuerte} $a)$ and $b)$, we need
the Coifman inequality \eqref{Coifest} and the boundedness of the
maximal operator,  given in \cite{BDP14} (see Theorem 2.6 ). In the case of the
classical Lebesgue spaces the theorem is the following

\begin{teo}\cite{BDP14}
Let $0\leq \alpha <n$, $w$ be a weight, $1\leq \beta< p<n/\alpha$ and
$1/q=1/p-\alpha/n$. Let $\eta$ be a Young function such that $\eta^{1+\frac{\rho \alpha}{n-\alpha}} \in B_{\frac{\rho n}{n-\alpha}}$ for every $\rho >\beta(n- \alpha)/(n -\alpha\beta)$, and let $\phi$ be a Young funciton such that $\phi^{-1}(t)t^{\alpha /n} \lesssim \eta^{-1}(t)$ for every $t>0$.  If  $w ^{\beta}\in A_{(\frac{p}{\beta},\frac{q}{\beta})}$, then $M_{\alpha,\eta}$ is bounded form $L^p(w^p)$ into $L^q(w^q)$.
\end{teo}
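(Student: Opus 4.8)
The plan is to deduce the theorem from the quoted unweighted bound of Cruz-Uribe and Moen together with the classical Muckenhoupt--Wheeden two-weight inequality for the fractional maximal operator, the bridge between the two being a pointwise domination $M_{\alpha,\eta}f\le c\,M_{\alpha,\beta'}f$ for a suitable exponent $\beta'\in(\beta,p)$. Throughout write $\rho_0:=\beta(n-\alpha)/(n-\alpha\beta)$, so that the hypothesis on $\eta$ reads $\eta^{1+\rho\alpha/(n-\alpha)}\in B_{\rho n/(n-\alpha)}$ for all $\rho>\rho_0$. The role of the auxiliary function $\phi$ is only in the applications of the theorem; it does not enter the argument.

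First I would fix $\beta'$ by self-improving the weight hypothesis. Since $1\le\beta<p$, the condition $w^{\beta}\in A_{(p/\beta,q/\beta)}$ is, by the standard identity $v\in A_{(s,t)}\iff v^{t}\in A_{1+t/s'}$, equivalent to $w^{q}\in A_{r}$ with $r=1+q(p-\beta)/(p\beta)>1$. By the openness of the Muckenhoupt classes (every $A_{r}$ weight with $r>1$ lies in $A_{r-\varepsilon}$ for some $\varepsilon>0$), $w^{q}\in A_{r-\varepsilon_0}$ for some $\varepsilon_0>0$, and we may take $r-\varepsilon_0>1$. As $\beta'$ runs over $(\beta,p)$ the quantity $1+q(p-\beta')/(p\beta')$ decreases continuously from $r$ to $1$, so we may choose $\beta'\in(\beta,p)$ with $1+q(p-\beta')/(p\beta')=r-\varepsilon_0$; for this $\beta'$ the same identity gives $w^{\beta'}\in A_{(p/\beta',q/\beta')}$, and moreover $1<\beta'<p<n/\alpha$ and $1/(q/\beta')=1/(p/\beta')-\alpha\beta'/n$.

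Next I would recognise the hypothesis on $\eta$, for a tailored value of $\rho$, as the fractional $B_{\beta'}^{\alpha}$ condition. Setting $\rho:=\beta'(n-\alpha)/(n-\alpha\beta')$ one checks $\rho>\rho_0$ (this reduces to $\beta'>\beta$) and, by a direct computation,
\[
1+\frac{\rho\alpha}{n-\alpha}=\frac{n}{n-\alpha\beta'},\qquad \frac{\rho n}{n-\alpha}=\frac{\beta'n}{n-\alpha\beta'}=:u,\qquad \frac1u=\frac1{\beta'}-\frac{\alpha}{n},
\]
so the hypothesis for this $\rho$ says precisely that $\int_1^{\infty}\eta(t)^{u/\beta'}t^{-u}\,dt/t<\infty$, i.e.\ $\eta\in B_{\beta'}^{\alpha}$. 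By the theorem of Cruz-Uribe and Moen quoted in the introduction, $M_{\alpha,\eta}\colon L^{\beta'}(dx)\to L^{u}(dx)$. I would localise this: for $f\ge0$ and any ball $B$, apply the estimate to $f\chi_B$ and use $M_{\alpha,\eta}(f\chi_B)(x)\ge|B|^{\alpha/n}\|f\|_{\eta,B}$ for $x\in B$; taking $L^{u}(B)$ norms and using $1/\beta'-1/u=\alpha/n$ gives $\|f\|_{\eta,B}\le c\|f\|_{\beta',B}$ with $c$ independent of $B$. Taking the supremum over balls yields, by Remark \ref{Malphar},
\[
M_{\alpha,\eta}f(x)\le c\,M_{\alpha,\beta'}f(x)=c\big(M_{\alpha\beta'}(|f|^{\beta'})(x)\big)^{1/\beta'}.
\]

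Finally I would conclude with the classical Muckenhoupt--Wheeden theorem. With $P:=p/\beta'$, $Q:=q/\beta'$, $\sigma:=w^{\beta'}$, we have $1<P<n/(\alpha\beta')$, $1/Q=1/P-\alpha\beta'/n$, $\sigma\in A_{P,Q}$ (from the first step), and $w^{q}=\sigma^{Q}$, $w^{p}=\sigma^{P}$; hence $M_{\alpha\beta'}\colon L^{P}(\sigma^{P})\to L^{Q}(\sigma^{Q})$, and therefore
\[
\|M_{\alpha,\eta}f\|_{L^{q}(w^{q})}^{q}\le c\int_{\R}\big(M_{\alpha\beta'}(|f|^{\beta'})\big)^{Q}\sigma^{Q}\le c\,\Big(\int_{\R}|f|^{p}w^{p}\Big)^{q/p},
\]
which is the claimed inequality. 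The only places where care is needed are the two exponent computations: that the tailored $\rho$ converts the stated $B$-condition into $\eta\in B_{\beta'}^{\alpha}$, and that the self-improvement $w^{q}\in A_{r-\varepsilon_0}$ transfers back to $w^{\beta'}\in A_{(p/\beta',q/\beta')}$ with $\beta'$ still strictly between $\beta$ and $p$ (this uses $\beta<p$ in an essential way); the rest is assembly of the quoted results.
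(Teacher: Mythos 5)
This statement is quoted from \cite{BDP14} and the paper supplies no proof of it, so there is nothing internal to compare against; judged on its own, your argument is correct. Your route is: (i) convert $w^{\beta}\in A_{(p/\beta,q/\beta)}$ into $w^{q}\in A_{r}$ with $r=1+q(p-\beta)/(p\beta)>1$, use openness of the Muckenhoupt classes, and convert back to get $w^{\beta'}\in A_{(p/\beta',q/\beta')}$ for some $\beta'\in(\beta,p)$ (the monotonicity and interval endpoints check out, and this is exactly where $\beta<p$ is needed); (ii) observe that the hypothesis on $\eta$ at the single value $\rho=\beta'(n-\alpha)/(n-\alpha\beta')>\rho_0$ is literally the condition $\eta\in B_{\beta'}^{\alpha}$ as defined in Section 2, so the Cruz--Uribe--Moen bound quoted there gives $M_{\alpha,\eta}\colon L^{\beta'}\to L^{u}$ with $1/u=1/\beta'-\alpha/n$, and the localization trick (testing on $f\chi_B$ and using $1/u+\alpha/n=1/\beta'$) upgrades this to the uniform estimate $\|f\|_{\eta,B}\leq c\|f\|_{\beta',B}$, hence $M_{\alpha,\eta}f\leq cM_{\alpha,\beta'}f$; (iii) finish with the classical Muckenhoupt--Wheeden theorem for $M_{\alpha\beta'}$, whose exponent bookkeeping is correct. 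The only external ingredient not stated anywhere in the paper is Muckenhoupt--Wheeden, which is standard. You are also right that $\phi$ is inert in this statement: it only enters when the theorem is invoked in Theorem \ref{TeoFuerte}(a), where the roles of $\eta$ and $\phi$ are swapped so that the conclusion there concerns $M_{\alpha,\phi}$. What your derivation buys over the paper's bare citation is a short, self-contained proof using only results already quoted in the preliminaries plus one classical theorem.
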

The boundedness of the $M_{\alpha,\phi}$ from $L^{p}(w^p)$ into
$L^q(w^q)$ with bump conditions,  given in
\cite{LibroPesos} (see Theorem 5.37), is the following,

\begin{teo}\cite{LibroPesos}
Let $0\leq\alpha<n$, $1<p<n/\alpha$, let
$\frac1{q}=\frac1{p}-\frac{\alpha}{n}$. Let $\phi, B$ and $C$ be
Young functions such that $B^{-1}(t)C^{-1}(t)\leq c \phi^{-1}(t)$,
$t\geq t_0 > 0$. If $C\in B_p^{\alpha}$ and  $w\in A_{q,B}$, then
for every $f\in L^p(w^p)$,
$$\int (M_{\alpha,\phi}f)^qw^q\leq C \int |f|^p w^p.$$
\end{teo}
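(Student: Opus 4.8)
I read the conclusion as the norm inequality $\|M_{\alpha,\phi}f\|_{L^q(w^q)}\le C\|f\|_{L^p(w^p)}$ (equivalently, the displayed inequality when $\|f\|_{L^p(w^p)}=1$). The plan is a dyadic sparse-domination argument. By the standard passage to finitely many shifted dyadic grids, $M_{\alpha,\phi}f(x)\lesssim\sum_{t}M^{\mathscr D_t}_{\alpha,\phi}f(x)$, where $M^{\mathscr D}_{\alpha,\phi}g(x):=\sup_{Q\in\mathscr D,\,Q\ni x}|Q|^{\alpha/n}\|g\|_{\phi,Q}$; hence it suffices to bound $M^{\mathscr D}_{\alpha,\phi}$ for one fixed grid $\mathscr D$. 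Since $L^\infty_c(\R)$ is dense in $L^p(w^p)$, by Fatou it is enough to treat $0\le f\in L^\infty_c(\R)$ with a constant independent of $f$; replacing $\mathscr D$ by the subgrid of cubes contained in a large cube $Q_0$ and letting $Q_0\uparrow\R$ at the end, we may assume the level sets below have finite measure, so that the Calder\'on--Zygmund stopping construction is available. Fix $a:=2^{n+1}$, put $\Omega_k:=\{M^{\mathscr D}_{\alpha,\phi}f>a^k\}$, and let $\{Q^j_k\}_j$ be the maximal dyadic cubes with $|Q|^{\alpha/n}\|f\|_{\phi,Q}>a^k$; these are pairwise disjoint and $\bigcup_j Q^j_k=\Omega_k$. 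For $Q=Q^j_k$ its dyadic parent $\widehat Q$ satisfies $|\widehat Q|^{\alpha/n}\|f\|_{\phi,\widehat Q}\le a^k$ by maximality, so using $|\widehat Q|^{\alpha/n}=2^\alpha|Q|^{\alpha/n}$ and the convexity bound $\|f\|_{\phi,Q}\le 2^n\|f\|_{\phi,\widehat Q}$ we get $a^k<|Q^j_k|^{\alpha/n}\|f\|_{\phi,Q^j_k}\le 2^{n-\alpha}a^k$ (the cube $Q_0$, which has no parent, yields only a single harmless top-level term, bounded directly below).

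The key \emph{sparseness} claim is $|\Omega_{k+1}\cap Q^j_k|\le\tfrac12|Q^j_k|$. Any dyadic $R$ with $R\cap Q^j_k\neq\emptyset$ and $|R|^{\alpha/n}\|f\|_{\phi,R}>a^{k+1}$ is a \emph{proper} subcube of $Q^j_k$: if instead $R\supseteq Q^j_k$ it would be a cube strictly larger than $Q^j_k$ with the defining property at level $a^k$, contradicting maximality, while $R=Q^j_k$ is impossible since $2^{n-\alpha}a^k<a^{k+1}$. Writing $\lambda:=\|f\|_{\phi,Q^j_k}$ and $\mu:=\|f\|_{\phi,R}$, the inequalities $|R|^{\alpha/n}\mu>a^{k+1}$ and $|Q^j_k|^{\alpha/n}\lambda\le 2^{n-\alpha}a^k$ force $\mu>(a/2^{n-\alpha})\lambda\ge 2\lambda$; then $|f|/\mu<|f|/(2\lambda)$ and $\phi(|f|/(2\lambda))\le\tfrac12\phi(|f|/\lambda)$ (convexity of $\phi$) give $|R|\le\int_R\phi(|f|/\mu)\le\tfrac12\int_R\phi(|f|/\lambda)$, and summing over the disjoint maximal such $R$ (which cover $\Omega_{k+1}\cap Q^j_k$) together with $\int_{Q^j_k}\phi(|f|/\lambda)\le|Q^j_k|$ proves the claim. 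Hence the sets $E^j_k:=Q^j_k\setminus\Omega_{k+1}$ satisfy $|E^j_k|\ge\tfrac12|Q^j_k|$, are pairwise disjoint, and $\Omega_k\setminus\Omega_{k+1}=\bigsqcup_j E^j_k$.

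Now, since $M^{\mathscr D}_{\alpha,\phi}f\le a^{k+1}$ on $\Omega_k\setminus\Omega_{k+1}$ and $a^k<|Q^j_k|^{\alpha/n}\|f\|_{\phi,Q^j_k}$,
$$\int_{\R}\bigl(M^{\mathscr D}_{\alpha,\phi}f\bigr)^q w^q=\sum_k\int_{\Omega_k\setminus\Omega_{k+1}}\bigl(M^{\mathscr D}_{\alpha,\phi}f\bigr)^q w^q\le a^q\sum_{k,j}\bigl(|Q^j_k|^{\alpha/n}\|f\|_{\phi,Q^j_k}\bigr)^q w^q(Q^j_k).$$
On each $Q=Q^j_k$ the generalized H\"older inequality for Orlicz norms (valid because $B^{-1}C^{-1}\lesssim\phi^{-1}$) gives $\|f\|_{\phi,Q}=\|(fw)w^{-1}\|_{\phi,Q}\le c\|fw\|_{C,Q}\|w^{-1}\|_{B,Q}$, and the bump condition $w\in A_{q,B}$ gives $\|w^{-1}\|_{B,Q}\le[w]_{A_{q,B}}\|w\|_{q,Q}^{-1}=[w]_{A_{q,B}}(|Q|/w^q(Q))^{1/q}$; the factor $w^q(Q)$ cancels, so by sparseness
$$\int_{\R}\bigl(M^{\mathscr D}_{\alpha,\phi}f\bigr)^q w^q\le C[w]_{A_{q,B}}^q\sum_{k,j}\bigl(|Q^j_k|^{\alpha/n}\|fw\|_{C,Q^j_k}\bigr)^q|Q^j_k|\le 2C[w]_{A_{q,B}}^q\sum_{k,j}\bigl(|Q^j_k|^{\alpha/n}\|fw\|_{C,Q^j_k}\bigr)^q|E^j_k|.$$
For $x\in E^j_k\subseteq Q^j_k$ one has $|Q^j_k|^{\alpha/n}\|fw\|_{C,Q^j_k}\le M^{\mathscr D}_{\alpha,C}(fw)(x)\lesssim M_{\alpha,C}(fw)(x)$; since the $E^j_k$ are disjoint, the last sum is $\lesssim\int_{\R}(M_{\alpha,C}(fw))^q$ (the $Q_0$-term is bounded the same way, by $\int_{Q_0}(M_{\alpha,C}(fw))^q$). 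Finally $C\in B_p^\alpha$, so the Cruz-Uribe--Moen theorem quoted above gives $\|M_{\alpha,C}(fw)\|_{L^q}\lesssim\|fw\|_{L^p}=\|f\|_{L^p(w^p)}$. Combining, $\|M^{\mathscr D}_{\alpha,\phi}f\|_{L^q(w^q)}\lesssim[w]_{A_{q,B}}\|f\|_{L^p(w^p)}$, and summing over the finitely many grids $\mathscr D_t$ finishes the proof.

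\textbf{Main obstacle.} The heart of the matter is the sparseness of the stopping family: one must follow the fractional scaling $|\widehat Q|^{\alpha/n}=2^{\alpha}|Q|^{\alpha/n}$ through the comparison of Luxemburg averages of $f$ on nested cubes and calibrate the stopping constant $a=a(n,\alpha)$ so that the level sets lose at least half their measure at each step; everything after that is essentially bookkeeping, modulo the routine truncation/exhaustion that legitimizes the Calder\'on--Zygmund decomposition and the standard handling of the threshold $t\ge t_0$ in the generalized H\"older inequality (the small-argument part being absorbed trivially on each cube).
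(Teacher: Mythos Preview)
The paper does not give its own proof of this statement: it is quoted verbatim as Theorem 5.37 of \cite{LibroPesos} and used as a black box in the proof of Theorem~\ref{TeoFuerte}\,(b). So there is no ``paper's proof'' to compare against.

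Your argument is the standard one and is correct. The dyadic stopping with threshold $a=2^{n+1}$, the parent estimate $|Q^j_k|^{\alpha/n}\|f\|_{\phi,Q^j_k}\le 2^{n-\alpha}a^k$, and the convexity trick $\phi(|f|/(2\lambda))\le\tfrac12\phi(|f|/\lambda)$ indeed yield the sparseness $|\Omega_{k+1}\cap Q^j_k|\le\tfrac12|Q^j_k|$. The factorization $\|f\|_{\phi,Q}\lesssim\|fw\|_{C,Q}\|w^{-1}\|_{B,Q}$ from $B^{-1}C^{-1}\lesssim\phi^{-1}$, the cancellation of $w^q(Q)$ via $w\in A_{q,B}$, and the appeal to the Cruz-Uribe--Moen $B_p^{\alpha}$ bound for $M_{\alpha,C}$ are exactly the intended route. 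Your reading of the displayed inequality as $\|M_{\alpha,\phi}f\|_{L^q(w^q)}\lesssim\|f\|_{L^p(w^p)}$ is the only sensible one; as written the two sides scale differently, and the paper uses precisely this norm inequality two lines later.
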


Now we prove the part $(a)$ and $(b)$ of Theorem \ref{TeoFuerte},
\begin{proof}[Proof of Theorem \ref{TeoFuerte} a) and b)]
From the previous Theorems,  hypothesis (a) or (b) implies that $M_{\alpha,\phi}$ is bounded from $L^p(w^p)$ into $L^q(w^q)$.\\
 Then, by Theorem \ref{Coifman} and $w$ satisfies \eqref{eq: condW},
$$ \|T_{\alpha,m,b}^kf\|_{L^q(w^q)}\leq c \|b\|_{BMO}^{k} \|M_{\alpha,\phi}f\|_{L^q(w^q)} \leq c \|b\|_{BMO}^{k} \|f\|_{L^p(w^p)}.$$
\end{proof}

For the proof of Theorem 4.1 $(c)$    we use a Cauchy integral
formula technique, see \cite{CPP12} and \cite{BMMST17}. This
technique is as follows, let $T$ be a linear operator, we can write
$T_b^k$ as a complex integral operator
$$T_b^kf=\frac{d^k}{dz^k} e^{zb}T(f e^{-zb})\bigg|_{z=0} = \frac1{2\pi i}\int_{|z|=\epsilon}\frac{T_z(f)}{z^2}dz,$$
where $\epsilon >0$ and $T_z(f)=e^{zb}T(f e^{-zb})$, $z\in \C$.
This is called the ``conjugation'' of $T$ by $e^{zb}$. Now, if $\|\cdot\|$ is a norm we can apply Minkowski inequality,
$$\|T_b^kf\|\leq \frac1{2\pi \epsilon^k} \underset{|z|=\epsilon}{\sup} \|T_z(f)\| \qquad \epsilon >0.$$

Observe that using this technique we can obtain the boundedness of the commutator using the boundedness of the conjugation of the operator.


\begin{lema}\cite{BMMST17}\label{expAp}
Fix $1<r,\eta<\infty$. If $w^{\eta}\in A_r$ and $b\in BMO$.
Then $we^{\lambda b}\in A_r$ for every $\lambda \in \mathbb{R}$ verifying
$$|\lambda|\leq \frac{\min\{1,p-1\}}{\eta'\|b\|_{BMO}}.$$
\end{lema}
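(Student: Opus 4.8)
The plan is to verify the $A_r$ characteristic of $u:=we^{\lambda b}$ directly, ball by ball, by peeling off the oscillatory factor $e^{\lambda b}$ with a single application of H\"older's inequality with exponents $\eta$ and $\eta'$, controlling the resulting exponential averages via the John--Nirenberg inequality, and recognizing the leftover $w$-averages as the $A_r$ product of $w^\eta$, which is finite by hypothesis.

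First I would fix a ball $B$ and write $b-b_B$ for the oscillation of $b$ on $B$. Since $e^{\lambda b}=e^{\lambda b_B}e^{\lambda(b-b_B)}$, the constant $e^{\lambda b_B}$ enters the average of $u$ with exponent $+1$ and the average of $u^{-1/(r-1)}$ with exponent $-1/(r-1)$, so after raising the latter to the power $r-1$ the two constants cancel in the $A_r$ product. It therefore suffices to bound, uniformly in $B$,
\[
\left(\frac1{|B|}\int_B w\,e^{\lambda(b-b_B)}\right)\left(\frac1{|B|}\int_B w^{-1/(r-1)}e^{-\lambda(b-b_B)/(r-1)}\right)^{r-1}.
\]
Applying H\"older with exponents $\eta,\eta'$ to each factor separates $w$ from the exponential:
\[
\frac1{|B|}\int_B w\,e^{\lambda(b-b_B)}\le \left(\frac1{|B|}\int_B w^\eta\right)^{1/\eta}\left(\frac1{|B|}\int_B e^{\lambda\eta'(b-b_B)}\right)^{1/\eta'},
\]
and likewise for the second factor with $-\lambda/(r-1)$ replacing $\lambda$. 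Multiplying the four pieces, the $w$-pieces recombine into $[w^\eta]_{A_r}^{1/\eta}<\infty$, and one is left with a product of four exponential averages of $\pm\lambda\eta'(b-b_B)$ and $\mp\lambda\eta'(b-b_B)/(r-1)$.

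It remains to bound $\frac1{|B|}\int_B e^{\pm\gamma(b-b_B)}$ by an absolute constant for $\gamma\in\{\lambda\eta',\ \lambda\eta'/(r-1)\}$, uniformly in $B$; this is exactly the content of the sharp exponential form of the John--Nirenberg inequality, which gives $\frac1{|B|}\int_B e^{\gamma|b-b_B|}\le C$ whenever $\gamma\|b\|_{BMO}$ lies below a universal threshold (normalized, as in the $BMO$ seminorm used here, so that the threshold is $1$). Tracking constants, both relevant values of $\gamma$ satisfy this as soon as $\lambda\eta'\le\min\{1,r-1\}/\|b\|_{BMO}$, i.e. $|\lambda|\le\min\{1,r-1\}/(\eta'\|b\|_{BMO})$, which is the claimed range (the exponent $r-1$ of the $A_r$ condition is what produces the $\min\{1,r-1\}$). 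The only real subtlety is keeping the dependence on the John--Nirenberg threshold explicit so as to obtain the precise admissible interval for $\lambda$; the H\"older-and-cancellation bookkeeping is routine. Since the statement is quoted verbatim from \cite{BMMST17}, I would in the end simply cite that reference for the optimized constant rather than reproduce the argument.
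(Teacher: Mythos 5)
The paper never proves this lemma: it is quoted from \cite{BMMST17} and used as a black box in the proof of Theorem \ref{TeoFuerte}(c), so there is no internal argument to compare yours against line by line. That said, your proof is the standard (and almost certainly the cited) route, and it is sound in structure: cancelling the constant factor $e^{\lambda b_B}$ inside the $A_r$ product, applying H\"older with exponents $\eta,\eta'$ to each of the two averages so that the $w$-pieces recombine into $[w^{\eta}]_{A_r}^{1/\eta}$, and reducing everything to exponential averages $\frac1{|B|}\int_B e^{\pm\gamma(b-b_B)}$ with $\gamma\in\{|\lambda|\eta',\,|\lambda|\eta'/(r-1)\}$, which is exactly how the factors $\eta'$ and $\min\{1,r-1\}$ in the admissible range arise. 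Two caveats. First, the exponential John--Nirenberg inequality bounds $\frac1{|B|}\int_B e^{\gamma|b-b_B|}$ only when $\gamma\|b\|_{BMO}$ is below a threshold which, for the usual normalization of the $BMO$ seminorm (the one this paper uses), is a dimensional constant and not $1$; so as written your argument establishes the range $|\lambda|\le c_n\min\{1,r-1\}/(\eta'\|b\|_{BMO})$ rather than the literal constant in the statement, and the parenthetical ``normalized so that the threshold is $1$'' is carrying real weight that you do not justify --- you rightly fall back on citing \cite{BMMST17} for the precise constant. This is harmless for the way the lemma is used here (in Theorem \ref{TeoFuerte}(c) only some fixed admissible radius $\epsilon_0$ is needed), but it should be flagged as a constant left to the reference rather than proved. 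Second, the $p$ in the displayed condition of the statement is a typo for $r$ (no $p$ appears in the hypotheses); your proof correctly interprets it as $r-1$, coming from the exponent in the $A_r$ condition.
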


\begin{proof}[Proof of  Theorem \ref{TeoFuerte} (c)]
Let $T=T_{\alpha,m}$.
Let $w\in A_{p,q}$ and $\nu = w e^{Re(z)b}$, where $Re(z)$ is the real part of the complex number $z$.
If $\nu \in A_{p,q}$, then
$$\|T_zf\|_{L^q(w^q)}=\|T(f e^{-zb})\|_{L^q(\nu^q)}\leq c\|f e^{-zb}\|_{L^p(\nu^p)}=c\|f\|_{L^p(w^p)},$$
since $T$ is boundedness from $L^p(\nu^p)$ into $L^q(\nu^q)$.

Let us prove that $\nu \in A_{p,q}$. If $w\in A_{p,q}$ then $w^q\in A_{1+\frac{q}{p'}}$ and exists $r>1$ such that $w^{qr}\in A_{1+\frac{q}{p'}}$.
 Let $\epsilon_0=\frac{\min\{1,\frac{p'}{q}\}}{qr'\|b\|_{BMO}}$, if $|z|=\epsilon_0$ then
$$|qRe(z)|\leq q|z|=\frac{\min\{1,\frac{p'}{q}\}}{r'\|b\|_{BMO}}.$$
By Lemma \ref{expAp}, $\nu^q\in A_{1+\frac{q}{p'}}$ and $\nu\in A_{p,q}$.



Hence,
\begin{align*}
\|T_{b}^kf\|_{L^p(w^p)} &\leq \frac1{2\pi \epsilon_0^k} \underset{|z|=\epsilon_0}{\sup} \|T_z(f)\|_{L^p(w^p)} \\
&\leq \frac1{2\pi c_{p,q}^k}\|b\|_{BMO}^k \|f\|_{L^q(w^q)}.
\end{align*}
\end{proof}

Now, we prove the weighted $BMO$ inequality
\begin{proof}[Proof of Theorem \ref{BMOw}] Follows the ideas in \cite{LMRT08}, the authors prove that
\begin{equation}\label{eq: MBMO}
w^r\in A\left(\frac{n}{\alpha r},\infty\right) \Rightarrow \|wM_{\alpha,r}f\|_{\infty}\leq C \|fw\|_{n/\alpha}.
\end{equation}

Now, by Lemma 4.1 in \cite{CLO17}, Theorem \ref{SharpHorm} and (\ref{eq: MBMO}), we get
\begin{align*}
\||T_{\alpha,m,b}^kf|\|_w \simeq \|wM^{\sharp}T_{\alpha,m,b}^kf\|_{\infty}&\leq C \|b\|_{BMO}^k\sum_{i=1}^m \|wM_{\alpha,\phi}f(A_i^{-1} \cdot)\|_{\infty}
\\&\leq C \kappa_r\|b\|_{BMO}^k\sum_{i=1}^m \|wM_{\alpha,r}f(A_i^{-1} \cdot)\|_{\infty}
\\&\leq C \kappa_r\|b\|_{BMO}^k\sum_{i=1}^m \|wf(A_i^{-1} \cdot)\|_{n/\alpha}
\\&\leq C \kappa_r\|b\|_{BMO}^k\sum_{i=1}^m \|w(A_i \cdot)f\|_{n/\alpha}
\\&\leq C \kappa_r m \|b\|_{BMO}^k \|wf\|_{n/\alpha}.
\end{align*}
\end{proof}

\section{Proof of two weights norm inequalities}
%
%


 For the proof of the two weights norm inequality we need the following auxiliary results.
\begin{lema}\label{lemaM}

\begin{enumerate}[(a)]
\item \cite{P95sufficient} Let $\Phi$  be a Young function. If $\Phi \in B_p$ then for every weight $\nu$
$$\int |M_{\Phi}f(x)|^p\nu(x)dx\leq c \int |f(x)|^p M\nu(x) dx.$$
\item \cite{LMPR09} If $r>1$, then
$$M(M_r)\approx M_r.$$
\end{enumerate}
\end{lema}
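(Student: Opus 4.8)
\emph{Plan.} I would prove the two parts separately: (b) admits a short self-contained proof, while (a) is the more delicate statement, for which I would follow Pérez's argument in \cite{P95sufficient}.

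For (b), write $g=|f|^r\ge 0$, so that $M_rf=(Mg)^{1/r}$. The lower bound is immediate: $Mh\ge h$ a.e.\ for any non-negative locally integrable $h$ (Lebesgue differentiation), hence $M(M_rf)\ge M_rf$ a.e. For the upper bound fix $x$ and a cube $Q\ni x$; using $Mg\le M(g\chi_{3Q})+M(g\chi_{(3Q)^c})$ and the subadditivity of $t\mapsto t^{1/r}$ (valid since $0<1/r<1$),
\[
\frac1{|Q|}\int_Q (Mg)^{1/r}\le \frac1{|Q|}\int_Q \bigl(M(g\chi_{3Q})\bigr)^{1/r}+\frac1{|Q|}\int_Q \bigl(M(g\chi_{(3Q)^c})\bigr)^{1/r}.
\]
For the first term, Kolmogorov's inequality with exponent $1/r<1$ and the weak type $(1,1)$ bound for $M$ give
\[
\frac1{|Q|}\int_Q \bigl(M(g\chi_{3Q})\bigr)^{1/r}\le \frac{r}{r-1}\Bigl(\tfrac1{|Q|}\,\|M(g\chi_{3Q})\|_{L^{1,\infty}}\Bigr)^{1/r}\le C_{r,n}\Bigl(\tfrac1{|3Q|}\int_{3Q}g\Bigr)^{1/r}\le C_{r,n}\,\bigl(Mg(x)\bigr)^{1/r}.
\]
For the second term, a standard geometric argument shows $M(g\chi_{(3Q)^c})(y)\le C_n\,M(g\chi_{(3Q)^c})(x)\le C_n\,Mg(x)$ for every $y\in Q$ (any cube $R\ni y$ meeting $(3Q)^c$ has $\ell(R)\gtrsim_n\ell(Q)$, so it enlarges by a bounded factor to a cube containing $x$); hence that term is $\le C_n\,(Mg(x))^{1/r}$ as well. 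Taking the supremum over cubes $Q\ni x$ yields $M(M_rf)(x)\le C_{r,n}\,M_rf(x)$. (Equivalently, one may invoke the Coifman--Rochberg fact that $(Mg)^{1/r}\in A_1$ with constant depending only on $r$ and $n$, together with $w\in A_1\iff Mw\approx w$.)

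For (a) the plan is to follow \cite{P95sufficient}. First reduce to a dyadic Orlicz maximal operator: $M_\Phi f$ is pointwise dominated by a finite sum of operators $M_\Phi^{\mathcal{D}_i}f$ over finitely many shifted dyadic grids $\mathcal{D}_i$, while $M^{\mathcal{D}_i}\nu\le M\nu$, so it suffices to bound $\int (M_\Phi^{\mathcal{D}}f)^p\nu$ for a single grid $\mathcal{D}$. For each $k\in\Z$ take the Calderón--Zygmund cubes at level $2^k$, i.e.\ the maximal dyadic cubes $Q_{k,j}$ with $\|f\|_{\Phi,Q_{k,j}}>2^k$, so $\Omega_k:=\{M_\Phi^{\mathcal{D}}f>2^k\}=\bigsqcup_j Q_{k,j}$, and set $E_{k,j}=Q_{k,j}\setminus\Omega_{k+1}$; the sets $\{E_{k,j}\}$ are pairwise disjoint and, by a computation with the Luxemburg average, satisfy $|E_{k,j}|\ge\tfrac12|Q_{k,j}|$ (sparseness). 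Then, using $\nu(E_{k,j})\le\nu(Q_{k,j})\le|Q_{k,j}|\inf_{Q_{k,j}}M\nu$ and $|Q_{k,j}|\le 2|E_{k,j}|$,
\[
\int (M_\Phi^{\mathcal{D}}f)^p\nu\;\lesssim\;\sum_{k}2^{kp}\,\nu(\Omega_k\setminus\Omega_{k+1})\;=\;\sum_{k,j}2^{kp}\,\nu(E_{k,j})\;\lesssim\;\sum_{k,j}2^{kp}\,|E_{k,j}|\inf_{Q_{k,j}}M\nu,
\]
and it remains to absorb the right-hand side into $c\int|f|^pM\nu$. This is exactly where the hypothesis $\Phi\in B_p$, namely $\int_1^\infty \Phi(t)\,t^{-p}\,dt/t<\infty$, enters: it is equivalent to the (unweighted) $L^p$-boundedness of $M_\Phi$ and, combined with the sparseness just established, lets one close the estimate. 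The bookkeeping is carried out in \cite{P95sufficient}, so I would cite it rather than reproduce it.

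The main obstacle is precisely this concluding step of (a): unlike the classical case $\Phi(t)=t^p$, the average $\|f\|_{\Phi,Q}^p$ is not controlled by $\tfrac1{|Q|}\int_Q|f|^p$, so passing from $\sum_{k,j}2^{kp}|E_{k,j}|\inf_{Q_{k,j}}M\nu$ to $\int|f|^pM\nu$ genuinely requires the full $B_p$ integrability of $\Phi$ (and the sparseness of the $E_{k,j}$). Part (b), by contrast, needs nothing beyond Kolmogorov's inequality and the comparability of the ``tail'' maximal function on $Q$, and is therefore safe to establish directly as above.
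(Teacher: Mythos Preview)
Your proposal is correct. Note, however, that the paper does not prove this lemma at all: it is stated purely as a citation of two external results (part (a) from \cite{P95sufficient}, part (b) from \cite{LMPR09}) and used as a black box in the proofs of Theorems \ref{fuerte2pesos} and \ref{debilpesado}. So there is no ``paper's own proof'' to compare against --- you have gone further than the paper by supplying the actual arguments. Your proof of (b) is the standard Coifman--Rochberg $A_1$ argument (indeed you note this equivalence yourself), and your sketch of (a) correctly identifies the CZ/sparse decomposition and the role of the $B_p$ condition, which is exactly P\'erez's strategy; citing \cite{P95sufficient} for the final bookkeeping is appropriate and matches what the paper itself does.
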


\begin{proof}[Proof of Theorem \ref{fuerte2pesos}]
Let $u$ a weight and $\nu(x)= M_{\alpha p,\D}u(x)$.
By duality, (\ref{fuerteuMu}) turns out to be equivalent to
$$\int_{\R}|T^*f(x)|^{p'}  \nu(x)^{1-p'}dx
 \leq c\int_{\R} \sum_{i=1}^m |f(A_ix)|^{p'} u(x)^{1-p'}dx.$$

 Since $\nu=M_{\alpha p,\D}u^{1-p'}\in A_{\infty}$, see \cite{BLR11}, 
then by Remark \ref{obsMA} and the fact that $\E \in B_{p'}$ we get
\begin{align*}
\int_{\R}|T^*f(x)|^{p'}  \nu(x)^{1-p'}dx
&\leq c \int_{\R}M_{\alpha,\phi}f(A_ix)^{p'}  \nu(x)^{1-p'}dx
\\ &\leq c \int_{\R}M_{\E}(fw_{A_i^{-1}}^{-1/p})(A_ix)^{p'}M_{\alpha,\F}(w_{A_i^{-1}}^{1/p})(A_ix)^{p'}  \nu(x)^{1-p'}dx
\\ &= c \int_{\R}M_{\E}(fw_{A_i^{-1}}^{-1/p})(A_ix)^{p'}M_{\alpha p,\D}(w_{A_i^{-1}})(A_ix)^{p'/p}  \nu(x)^{1-p'}dx
\\ &\leq c \int_{\R}M_{\E}(fw_{A_i^{-1}}^{-1/p})(A_ix)^{p'}M_{\alpha p,\D}(w)(x)^{p'/p}  \nu(x)^{1-p'}dx
\\ &\leq c \int_{\R}M_{\E}(fw_{A_i^{-1}}^{-1/p})(A_ix)^{p'}dx
\\  &\leq c \int_{\R}|f(A_ix)w_{A_i^{-1}}^{-1/p}(A_ix)|^{p'}dx=c \int_{\R}|f(A_ix)|^{p'}w(x)^{1-p'}dx.
\end{align*}

\end{proof}

\begin{proof}[Proof of Theorem \ref{debilpesado}]%
We consider $m=2$, $T=T_{0,2}$. The general case is analogous.\\
Let $u$ be a weight, suppose that $u\in L^{\infty}_c$ (otherwise consider $u_N=\min \{u,N\}\chi_{B(0,N)}$ and use monotone converge). Let $0\leq f\in L^{\infty}_c$. By the standard Calder\'on-Zygmund decomposition of $f$ at  height $\lambda$, then there exists $\{Q_j\}_j$ dyadic cubes such that
$$\lambda < \frac1{|Q_j|}\int_{Q_j}f\leq 2^n \lambda,$$
and write $f=g+h$ where
\begin{align*}
 g=f\chi_{\R\setminus \cup_j Q_j} + \sum_j f_{Q_j}\chi_{Q_j}, && h=\sum_j h_j =\sum_j (f-f_{Q_j})\chi_{Q_j},
 \end{align*}
where $f_{Q_j}$ denotes the average of $f$ over $Q_j$. Let us recall that $0\leq g \leq 2^n\lambda$ a.e. and also that each $h_j$ has vanishing integral. We set $\tilde{Q}_{j,i}$ the cube with center $A_ic_j$ with length $2\sqrt{n}Ml(Q_j)$, where $M=\underset{1\leq i \leq 2}{\max}\|A_i\|_{\infty}$, $\displaystyle \tilde{\Omega}=\bigcup_j \left( \tilde{Q}_{j,1}\cup \tilde{Q}_{j,2}\right)$ and $\tilde{u}=u\chi_{\R\setminus \tilde{\Omega}}$. Then
\begin{align*}
u\{x\in \R : |Tf(x)|> \lambda\} &\leq u(\tilde{\Omega}) + u\{x \in  \R\setminus \tilde{\Omega}: |Th(x)|>\lambda/2\}
\\&\qquad \qquad \qquad+u\{x \in  \R\setminus \tilde{\Omega}: |Tg(x)|>\lambda/2\}
\\ &\quad= I + II + III.
\end{align*}

For $I$, observe that $|\tilde{Q}_{j,i}|=(42\sqrt{n}M)^n |Q_j|$. Then, we have

\begin{align*}
I&=  u(\bigcup_j \left( \tilde{Q}_{j,1}\cup \tilde{Q}_{j,2}\right))\leq \sum_j \bigg[ u(\tilde{Q}_{j,1})+ u(\tilde{Q}_{j,2}) \bigg]
\\&\leq \frac{c_n}{\lambda}\sum_j \Bigg[\frac{u(\tilde{Q}_{j,1})}{|\tilde{Q}_{j,1}|}+ \frac{u(\tilde{Q}_{j,2})}{|\tilde{Q}_{j,2}|}\Bigg] \int_{Q_j} f
\\&\leq \frac{c_n}{\lambda}\sum_j  \int_{Q_j} \big[Mu(A_1x) + Mu(A_2x)\big] f(x)dx.
\end{align*}
where the last inequality follows since $x\in Q_j$ then $A_ix\in \tilde{Q}_{j,i}$.

To estimate $II$, recall that  the function $h_j$ has vanishing integral, then
\begin{align*}
II&=u\{x \in  \R\setminus \tilde{\Omega}: |Th(x)|>\lambda/2\} \leq \frac{2}{\lambda}\sum_j \int_{\R\setminus \tilde{\Omega}}|Th_j(x)|u(x)dx
\\ &\leq \frac{2}{\lambda}\sum_j \int_{\R\setminus \tilde{\Omega}}\left|\int_{Q_j}(K(x,y)-K(x,c_j))h_j(y)dy\right|u(x)dx
\\ &\leq \frac{2}{\lambda}\sum_j \int_{Q_j} |h_j(y)|\int_{\R\setminus \left( \tilde{Q}_{j,1}\cup \tilde{Q}_{j,2}\right)}\left|(K(x,y)-K(x,c_j))\right|u(x)dxdy.
\end{align*}

We claim that for every $y\in Q_j$ we have
\begin{align}\label{essinf}
\int_{\R\setminus \left( \tilde{Q}_{j,1}\cup \tilde{Q}_{j,2}\right)}\left|(K(x,y)-K(x,c_j))\right|u(x)dx
\leq c \underset{x\in Q_j}{ \text{ ess\,}\inf}\left[M_{\Phi}u(A_1x)+M_{\Phi}u(A_2x)\right].
\end{align}
This estimate drives us to

\begin{align*}
II &\leq \frac{c}{\lambda}\sum_j \underset{ Q_j}{ \text{ ess\,}\inf}\left[M_{\Phi}u(A_1\cdot)+M_{\Phi}u(A_2\cdot)\right] \int_{Q_j} |h_j(y)| dy
\\&\leq \frac{c}{\lambda}\sum_j \underset{ Q_j}{ \text{ ess\,}\inf}\left[M_{\Phi}u(A_1\cdot)+M_{\Phi}u(A_2\cdot)\right] \int_{Q_j} f(y) dy
\\ &\leq \frac{c}{\lambda}\sum_j \int_{Q_j} f(y) \left[M_{\Phi}u(A_1y)+M_{\Phi}u(A_2y)\right] dy.
\end{align*}

Let us proof (\ref{essinf}). Using (\ref{ineqK}), we obtain
\begin{align*}
\int_{\R\setminus \left( \tilde{Q}_{j,1}\cup \tilde{Q}_{j,2}\right)}&\left|(K(x,y)-K(x,c_j))\right|u(x)dx
\\&\leq \int_{Z^1\cup Z^2}|k_1(x-A_1y)-k_1(x-A_1c_j)||k_2(x-A_2y)|u(x)dx
\\&\quad+\int_{Z^1\cup Z^2}|k_1(x-A_1c_j)||k_2(x-A_2y)-k_2(x-A_2c_j)|u(x)dx,
\end{align*}
where $Z^i=\R\setminus \left( \tilde{Q}_{j,1}\cup \tilde{Q}_{j,2}\right)\cap \{x: |x-A_iy|\leq |x-A_ry|, r\not=i\}$.

We only estimate the first summand, the other follows in an analogous way. Using generalized H\"older inequality and observing that $|\tilde{Q}_{j,i}|=(42\sqrt{n}M)^n |Q_j|$, we have
\begin{align*}
\int_{Z^1}&|k_1(x-A_1y)-k_1(x-A_1c_j)||k_2(x-A_2y)|u(x)dx
\\ &\leq c \sum_{t=1}^{\infty}  |Q^t| \|k_1(\cdot-A_1y)-k_1(\cdot-A_1c_j)\chi_{Q^{t+1}\setminus Q^t}\|_{\Psi_1, Q^{t+1}} \|k_2(\cdot-A_2y)\chi_{Q^{t+1}\setminus Q^t}\|_{\Psi_2,Q^{t+1}} \|u\|_{\Phi,Q^{t+1}},
\end{align*}
where $Q^t$ is the cube with center $A_1c_j$ and lenght $2^t\sqrt{n}Ml(Q_j)$. Observe $Q^1=\tilde{Q}_{j,1}$.

Since, $k_2\in S_{n-\alpha_2,\Phi_2}$, we get
$$\|k_2(\cdot-A_2y)\chi_{Q^{t+1}\setminus Q^t}\|_{\Psi_2,Q^{t+1}}\leq c |Q^t|^{-\alpha_2/n}.$$
Also, if $x\in Q_j$ then for all $t\in \N$ we get   $A_1x\in \tilde{Q}_{j,1}\subset Q^{t}$  and
$$|Q^t|^{\frac{\alpha}{n}}\|u\|_{\Phi,Q^{t+1}}\leq c \underset{Q_j}{ \text{ ess\,}\inf}M_{\Phi}u(A_1\cdot).$$

Then,
\begin{align*}
\int_{Z^1}&|k_1(x-A_1y)-k_1(x-A_1c_j)||k_2(x-A_2y)|u(x)dx
\\ &\leq c  \underset{Q_j}{ \text{ ess\,}\inf}M_{\Phi}u(A_1\cdot)\sum_{t=1}^{\infty}  |Q^t|^{\frac{\alpha_1}{n}} \|k_1(\cdot-A_1y)-k_1(\cdot-A_1c_j)\chi_{Q^{t+1}\setminus Q^t}\|_{\Psi_1, Q^{t+1}}
\\ &\leq c  \underset{Q_j}{ \text{ ess\,}\inf}M_{\Phi}u(A_1\cdot),
\end{align*}
where the last inequality holds since $k_1\in H_{n-\alpha_1,\Psi_1}$.

In an analogous way, we obtain
\begin{align*}
\int_{Z^2}|k_1(x-A_1y)-k_1(x-A_1c_j)||k_2(x-A_2y)|u(x)dx
\leq c  \underset{Q_j}{ \text{ ess\,}\inf}M_{\Phi}u(A_2\cdot).
\end{align*}

The estimate  $III$ is different in each case. We start with $(a)$. For $p>1$, using Theorem \ref{Coifman}, the fact that  $M_ru\in A_1$ and Lemma \ref{lemaM}, we get
\begin{align*}
III&=u\{x \in  \R\setminus \tilde{\Omega}: |Tg(x)|>\lambda/2\}\leq \frac{2^p}{\lambda^p}\int_{\R} |Tg(x)|^p \tilde{u}(x)dx
\\&\leq \frac{2^p}{\lambda^p}\int_{\R} |Tg(x)|^p M_r\tilde{u}(x)dx
\leq \frac{c}{\lambda^p}\sum_{i=1}^2\int_{\R} |M_{\Phi}g(A_i^{-1}x)|^p M_r\tilde{u}(x)dx
\\ &\leq \frac{c}{\lambda^p}\sum_{i=1}^2\int_{\R} |g(A_i^{-1}x)|^p M(M_r\tilde{u})(x)dx
\leq \frac{c}{\lambda^p}\sum_{i=1}^2\int_{\R} |g(A_i^{-1}x)|^p M_r\tilde{u}(x)dx
\\&\leq \frac{c}{\lambda^p}\int_{\R} |g(x)|^p \sum_{i=1}^2M_r\tilde{u}(A_ix)dx
\leq \frac{c}{\lambda^p}\int_{\R} |g(x)|^p \sum_{i=1}^2M_{\Phi}\tilde{u}(A_ix)dx,
\end{align*}
where the last inequality holds since $t^r\leq \Phi(t)$ for $t\geq t_0 >0$.
Since $g\leq 2^n\lambda$, then
\begin{align*}
II&\leq \frac{c}{\lambda^p}\int_{\R} |g(x)|^p \sum_{i=1}^2M_{\Phi}\tilde{u}(A_ix)dx
\\ &\leq  \frac{c}{\lambda}\int_{\R} |g(x)| \sum_{i=1}^2M_{\Phi}\tilde{u}(A_ix)dx
\\ & \leq \frac{c}{\lambda}\int_{\R} f(x) \sum_{i=1}^2M_{\Phi}\tilde{u}(A_ix)dx.
\end{align*}

To show $(b)$, we only have to estimate $III$. Using Theorem \ref{fuerte2pesos},
\begin{align*}
III&=u\{x \in  \R\setminus \tilde{\Omega}: |Tg(x)|>\lambda/2\}\leq \frac{2^p}{\lambda^p}\int_{\R} |Tg(x)|^p \tilde{u}(x)dx
\\&\leq \frac{c}{\lambda^p}\int_{\R}g(x)^p \sum_{i=1}^m M_{\D}u(A_ix)dx
\\ & \leq \frac{c}{\lambda}\int_{\R}f(x) \sum_{i=1}^m M_{\D}u(A_ix)dx.
\end{align*}

\end{proof}

\bibliographystyle{acm}
\bibliography{Biblio}

\begin{thebibliography}{10}

\bibitem{BMMST17}
{\sc B{\'e}nyi, {\'A}., Martell, J.~M., Moen, K., Stachura, E., and Torres,
  R.~H.}
\newblock Boundedness results for commutators with {B}{M}{O} functions via
  weighted estimates: a comprehensive approach.
\newblock {\em arXiv preprint arXiv:1710.08515\/} (2017).

\bibitem{BDP14}
{\sc Bernardis, A., Dalmasso, E., and Pradolini, G.}
\newblock Generalized maximal functions and related operators on weighted
  {M}usielak-{O}rlicz spaces.
\newblock {\em Ann. Acad. Sci. Fenn. Math. 39}, 1 (2014), 23--50.

\bibitem{BLR11}
{\sc Bernardis, A.~L., Lorente, M., and Riveros, M.~S.}
\newblock Weighted inequalities for fractional integral operators with kernel
  satisfying {H}{\"o}rmander type conditions.
\newblock {\em Math. Inequal. Appl 14}, 4 (2011), 881--895.

\bibitem{CLO17}
{\sc Caldarelli, M., Lerner, A., and Ombrosi, S.}
\newblock On a counterexample related to weighted weak type estimates for
  singular integrals.
\newblock {\em Proceedings of the American Mathematical Society 145}, 7 (2017),
  3005--3012.

\bibitem{CPP12}
{\sc Chung, D., Pereyra, M., and Perez, C.}
\newblock Sharp bounds for general commutators on weighted {L}ebesgue spaces.
\newblock {\em Transactions of the American Mathematical Society 364}, 3
  (2012), 1163--1177.

\bibitem{CUMP04}
{\sc Cruz-Uribe, D., Martell, J.~M., and P{\'e}rez, C.}
\newblock Extrapolation from $a_{\infty}$ weights and applications.
\newblock {\em Journal of Functional Analysis 213}, 2 (2004), 412--439.

\bibitem{CUM13}
{\sc Cruz-Uribe, D., and Moen, K.}
\newblock A fractional {M}uckenhoupt-{W}heeden theorem and its consequences.
\newblock {\em arXiv preprint arXiv:1303.3424\/} (2013).

\bibitem{LibroPesos}
{\sc Cruz-Uribe, D.~V., Martell, J.~M., and P{\'e}rez, C.}
\newblock {\em Weights, extrapolation and the theory of {R}ubio de {F}rancia},
  vol.~215.
\newblock Springer Science \& Business Media, 2011.

\bibitem{Duo}
{\sc Duoandikoetxea~Zuazo, J.}
\newblock {\em Fourier analysis}, vol.~29.
\newblock American Mathematical Soc., 2001.

\bibitem{FF15}
{\sc Ferreyra, E.~V., and Flores, G.~J.}
\newblock Weighted estimates for integral operators on local {B}{M}{O} type
  spaces.
\newblock {\em Mathematische Nachrichten 288}, 8-9 (2015), 905--916.

\bibitem{GIFR17}
{\sc Gallo, A.~L., Firnkorn, G. H.~I., and Riveros, M.~S.}
\newblock H{\"o}rmander's conditions for vector-valued kernels of singular
  integrals and its commutators.
\newblock {\em arXiv preprint arXiv:1706.08357\/} (2017).

\bibitem{GSU94}
{\sc Godoy, T., Saal, L., and Urciuolo, M.}
\newblock About certain singular kernels k (x, y)= k 1 (x--y) k 2 (x+ y).
\newblock {\em Mathematica Scandinavica\/} (1994), 98--110.

\bibitem{GU93}
{\sc Godoy, T., and Urciuolo, M.}
\newblock About the {L}p-boundedness of some integral operators.
\newblock {\em Revista de la Uni{\'o}n Matem{\'a}tica Argentina 38}, 3 (1993),
  192--195.

\bibitem{GU96}
{\sc Godoy, T., and Urciuolo, M.}
\newblock About the {L} p-boundedness of integral operators with kernels of the
  form k 1 (x--y) k 2 (x+ y).
\newblock {\em Mathematica Scandinavica\/} (1996), 84--92.

\bibitem{GU99}
{\sc Godoy, T., and Urciuolo, M.}
\newblock On certain integral operators of fractional type.
\newblock {\em Acta Mathematica Hungarica 82}, 1-2 (1999), 99--105.

\bibitem{IFR17}
{\sc Iba{\~n}ez-Firnkorn, G.~H., and Riveros, M.~S.}
\newblock Certain fractional type operators with {H}{\"o}rmander conditions.
\newblock {\em To appear in Ann. Acad. Sci. Fenn. Math.\/}.

\bibitem{LMRT08}
{\sc Lorente, M., Martell, J.~M., Riveros, M.~S., and de~La~Torre, A.}
\newblock Generalized {H}{\"o}rmander's conditions, commutators and weights.
\newblock {\em Journal of Mathematical Analysis and Applications 342}, 2
  (2008), 1399--1425.

\bibitem{LMPR09}
{\sc Lorente, M., Martell, M.~J., P{\'e}rez, C., and Riveros, M.~S.}
\newblock Generalized {H}{\"o}rmander conditions and weighted endpoint
  estimates.
\newblock {\em Studia Mathematica 195}, 2 (2009), 157--192.

\bibitem{O65}
{\sc O'Neil, R.}
\newblock Fractional integration in {O}rlicz spaces. {I}.
\newblock {\em Transactions of the American Mathematical Society 115\/} (1965),
  300--328.

\bibitem{P95sufficient}
{\sc P{\'e}rez, C.}
\newblock On sufficient conditions for the boundedness of the
  {H}ardy-{L}ittlewood maximal operator between weighted {L}p-spaces with
  different weights.
\newblock {\em Proceedings of the London Mathematical Society 3}, 1 (1995),
  135--157.

\bibitem{RaoRen91}
{\sc Rao, M.~M., and Ren, Z.}
\newblock Theory of {O}rlicz spaces, volume 146 of monographs and textbooks in
  pure and applied mathematics, 1991.

\bibitem{RS88}
{\sc Ricci, F., and Sj{\"o}gren, P.}
\newblock Two-parameter maximal functions in the {H}eisenberg group.
\newblock {\em Mathematische Zeitschrift 199}, 4 (1988), 565--575.

\bibitem{RU05}
{\sc Riveros, M.~S., and Urciuolo, M.}
\newblock Weighted inequalities for integral operators with some homogeneous
  kernels.
\newblock {\em Czechoslovak Mathematical Journal 55}, 2 (2005), 423--432.

\bibitem{RiU13}
{\sc Riveros, M.~S., and Urciuolo, M.}
\newblock Weighted inequalities for fractional type operators with some
  homogeneous kernels.
\newblock {\em Acta Mathematica Sinica. English Series 29}, 3 (2013), 449--460.

\bibitem{RU14}
{\sc Riveros, M.~S., and Urciuolo, M.}
\newblock Weighted inequalities for some integral operators with rough kernels.
\newblock {\em Open Mathematics 12}, 4 (2014), 636--647.

\bibitem{R17}
{\sc Rocha, P.}
\newblock A remark on certain integral operators of fractional type.
\newblock {\em arXiv preprint arXiv:1703.03287\/} (2017).

\bibitem{RU11}
{\sc Rocha, P., and Urciuolo, M.}
\newblock On the {H}p-{L}p boundedness of some integral operators.
\newblock {\em Georgian Math. J 18\/} (2011), 801--808.

\bibitem{RU12}
{\sc Rocha, P., and Urciuolo, M.}
\newblock On the {H}p-{L}q boundedness of some fractional integral operators.
\newblock {\em Czechoslovak Mathematical Journal 62}, 3 (2012), 625--635.

\bibitem{RoU13}
{\sc Rocha, P., and Urciuolo, M.}
\newblock About integral operators of fractional type on variable {L}p spaces.
\newblock {\em Georgian Mathematical Journal 20}, 4 (2013), 805--816.

\bibitem{U06}
{\sc Urciuolo, M.}
\newblock Weighted inequalities for integral operators with almost homogeneous
  kernels.
\newblock {\em Georgian Mathematical Journal 13}, 1 (2006), 183--191.

\bibitem{V16}
{\sc Urciuolo, M., and Vallejos, L.}
\newblock {L}p(.)- {L}q(.) boundedness of some integral operators with
  extrapolation techniques.
\newblock {\em To appear in Georgian Mathematical Journal\/}.

\end{thebibliography}

\end{document}